\def\maxwidth{\ifdim\Gin@nat@width>\linewidth\linewidth\else\Gin@nat@width\fi}
\def\maxheight{\ifdim\Gin@nat@height>\textheight\textheight\else\Gin@nat@height\fi}
\def\fps@figure{htbp}
\newcommand{\Q}{\ensuremath{\mathbf{Q}}}
\newcommand{\R}{\ensuremath{\mathbf{R}}}
\newcommand{\C}{\ensuremath{\mathbf{C}}}
\newcommand{\Z}{\ensuremath{\mathbf{Z}}}
\newcommand{\A}{\ensuremath{\mathbf{A}}}
\newcommand{\F}{\ensuremath{\mathbf{F}}}
\newcommand{\on}[1]{\operatorname{#1}}
\newcommand{\ord}{\on{ord}}
\newcommand{\bpm}{\begin{pmatrix}}
\newcommand{\epm}{\end{pmatrix}}
\newcommand{\gm}{\gamma}
\newcommand{\bs}{\backslash}
\newcommand{\us}{\underline{\sigma}}
\newcommand{\bbm}{\begin{bmatrix}}
\newcommand{\ebm}{\end{bmatrix}}
\newcommand{\bsbm}{\left[\begin{smallmatrix}}
\newcommand{\esbm}{\end{smallmatrix}\right]}
\newcommand{\ol}{\overline}
\newcommand{\ul}{\underline}
\newcommand{\f}{\mathfrak}
\newtheorem{thm}[equation]{Theorem}
\newtheorem{cor}[equation]{Corollary}
\newtheorem{prop}[equation]{Proposition}
\newtheorem{lem}[equation]{Lemma}
\newtheorem{rmk}[equation]{Remark}
\newtheorem{rmks}[equation]{Remarks}
\newtheorem{conj}[equation]{Conjecture}
\title{On a theorem of Jiang and Rallis}
\providecommand{\subtitle}[1]{
  \apptocmd{\@title}{\par {\large #1 \par}}{}{}
}
\author{\underline{Yaniel Rivera Vega}\\ 
\emph{Department of Risk $\&$ Insurance, University of Wisconsin - Madison, WI
} \\ \emph{riveravega@wisc.edu} \\
\vspace{2ex} \\ 
Joseph Hundley, Ph.D \\ 
\emph{Department of Mathematics, University at Buffalo, NY}\\ \emph{jahundle@buffalo.edu} \\
\vspace{2ex} \\ 
with an appendix by \\
Victor Scharaschkin\\
\emph{vscharaschkin@gmail.com}
}
\date{\vspace{-3ex}}
\begin{document}
\maketitle

\section{Introduction}
In modern number theory, a lot of research is dedicated to the 
study of zeta functions and their analytic properties. 
The first and simplest example of such a function, and 
possibly still the most important, is the Riemann zeta function, 
$\zeta(s),$ defined for $s=\sigma+it \in \C$ with 
$\sigma > 1$ by the absolutely convergent Dirichlet series 
$$
\zeta(s) = \sum_{n=1}^\infty \frac1{n^s},
$$
or by the absolutely convergent Euler product (over primes $p$)
$$
\prod_p \left(1- p^{-s}\right)^{-1},$$
but extending to an analytic function on $\C - \{ 1\},$
and satisfying the functional equation 
$$
\pi^{-s/2} \Gamma(\frac s2) \zeta(s) = \pi^{\frac{s-1}2}\Gamma( \frac{1-s}{2}) \zeta(1-s),
$$
where $\Gamma$ is the Gamma function, given, for $\Re(s) > 0$ 
by 
$$\Gamma(s) = \int_0^{\infty} e^{-u} u^{s-1} \, du.$$

This idea was extended to number fields (i.e., finite extensions
of $\Q$) by Dedekind, who defined a zeta function attached to 
such a field 
$$
\zeta_K(s) = \sum_{\f a}N\f a^{-s} = \prod_{\f p}(1-N \f p^{-s})^{-1}, 
$$
where the sum is over all ideals in the so-called ring of
integers, $\f o_K,$ of $K,$ the product is over just the prime ideals, 
and for each ideal $\f a,$ the norm $N\f a$ is just the index 
of $\f a$ in $\f o_K,$ which is finite. 

The same idea was extended in a different direction by Dirichlet, 
who introduced Dirichlet characters $\chi: \Z \to \C,$ which satisfy 
$$
\chi(mn) = \chi(m) \chi(n), \qquad \chi(m+N) = \chi(m), \qquad
\chi(n) = 0 \iff \gcd(n,N) \ne 1,
$$
for some integer $N.$ 
Dirichlet studied the $L$ function 
$$
L(s, \chi) = \sum_{n=1}^\infty \frac{\chi(n)}{n^{-s}}
= \prod_p (1-\chi(p) p^{-s})^{-1}.
$$

The two extensions are related by quadratic reciprocity. Indeed, 
if $K$ is any quadratic extension of $\Q,$ then there is a 
unique Dirichlet character $\chi,$ which takes values in 
$\{-1,0,1\},$ such that 
\begin{equation}\label{eq: zF=zL}
\zeta_K(s) = \zeta(s) L(s, \chi).\end{equation}
In more detail, for each prime $p,$ one of three things 
happens: 
\begin{enumerate}
    \item There are two prime ideals $\f p_1$ and $\f p_2$ in 
    $\f o_K$ such that $N\f p_1= N\f p_2 = p,$ and 
    $\chi(p) = 1;$ in this case we get a factor of 
    $(1-p^{-s})^{-2}$ in the Euler products 
    on both sides of \eqref{eq: zF=zL}.
    \item There is one prime ideal $\f p$ in $\f o_K$
    such that $N\f p = p^2,$ and $\chi(p) = -1;$ in this case we 
    get a factor of $(1-p^{-2s})^{-1} = (1-p^{-s})^{-1}(1+p^{-s})^{-1}$
    in the Euler products 
    on both sides of \eqref{eq: zF=zL}.
    \item There is one prime ideal $\f p$ in $\f o_K$
    such that $N\f p = p,$ and $\chi(p) = 0;$ in this case we 
    get a factor of $(1-p^{-s})^{-1} = (1-p^{-s})(1-0\cdot p^{-s})$
    in the Euler products 
    on both sides of \eqref{eq: zF=zL}.
\end{enumerate}
For example, if $K=\Q[i]$ then $\f o_K= \Z[i],$ which is 
a p.i.d., and $\chi$ is given by 
$$
\chi(n) = \begin{cases} 1, & n \equiv 1 \mod 4, \\ 
-1, & n \equiv 3 \mod 4, \\ 
0 , & n \text{ is even.}\end{cases}
$$
There is an analogue of the identity \eqref{eq: zF=zL} 
in the setting where $F$ is a
quadratic extension, not of $\Q,$ but of another 
field $F$ which is itself a finite extension of 
$\Q.$ In this case $\zeta_K/\zeta_F$ is attached to 
a Hecke character, which is like a Dirichlet 
character, but defined on $\f o_F$ rather than $\Z.$
These and related facts are covered in most introductory 
algebraic number theory books, such as
\cite{Lang},\cite{CassFroh}.

There is also an analogue of the identity \eqref{eq: zF=zL} in
the setting where 
the extension is  of degree $>2,$ but in this case 
$\zeta_K/\zeta$ (or, more generally, $\zeta_K/\zeta_F$)
is not a simple Dirichlet (or, more generally, Hecke) $L$ 
function, but a more mysterious type of Euler product, 
known as an Artin $L$ function. Unlike Dirichlet and Hecke 
$L$ functions, Artin $L$ functions do not have analytic 
properties that are well understood. 

In the paper 
\cite{Jiang-Rallis}, Jiang and Rallis introduced a new 
way to study the ratio $\zeta_K/\zeta_F$ in the case 
when $[K:F]=3.$ The method is based on an 
integral over a group of matrices with entries in a 
ring called the adeles. The adele ring of a number
field $F$ is a natural object to use in the 
study of Euler products, because the adele ring itself 
is a type of product, with a factor for 
each prime, and a finite number of other factors. 
For example, in the case of $\Q$ there is one additional 
factor, and it corresponds to factor of 
$\pi^{-s/2} \Gamma(\frac{s}2)$ which must be added to the 
original Euler product for $\zeta(s)$ in order to obtain a 
nice functional equation. 
We will describe the 
adele ring of $\Q$ in more detail below. 

The integral $I^{\sigma}(f_s,s)$ 
of Jiang and Rallis depends on an element 
$\sigma$ of $F^4.$ This quadruple determines a polynomial $g$
of degree at most three, and when $g$ is cubic and  irreducible, that provides the 
cubic extension of $F.$
In their paper, Jiang and Rallis 
prove that the integral has nice analytic properties by 
relating it to a function called an Eisenstein series 
with known analytic properties. This Eisenstein series is 
defined on a group of matrices, with entries in the adele 
ring, which is known as $G_2.$

Having established the analytic properties of their integral, 
Jiang and Rallis set out to 
calculate it. When $\sigma$ determines a cubic extension $K,$
the goal is to 
prove that the integral is equal to the product 
of $\zeta_K(3s-1)/\zeta_F(3s-1),$
and an additional factor, known as the 
``normalizing factor'' of the Eisenstein series 
mentioned above, and given by 
$$
\frac{1}{\zeta_F(3s)\zeta_F(6s-2)\zeta_F(9s-3)}.
$$
(They are also able to predict what the value 
of $I^{\sigma}(f_s, s)$
should be when the polynomial attached to $\sigma$ 
is reducible, with distinct roots.)
Using the product structure of the adele ring, Jiang and 
Rallis factor their integral as a product over primes, 
with a factor $I^{\sigma}_{\f p}(f_{s,\f p}, s)$ for each 
prime ideal $\f p$ in $\f o_F$ and a finite number 
of other factors. 
The 
job is then to match the contribution from each prime with the 
corresponding factor in the Euler product for  
$$
\frac{\zeta_K(3s-1)}{\zeta_F(3s)\zeta_F(3s-1)\zeta_F(6s-2)\zeta_F(9s-3)}.
$$ Jiang and Rallis were able to 
accomplish this goal only under two additional hypotheses:
\begin{itemize}
    \item they restrict attention to primes which are ``unramified.'' In essence this means that they assume that anything which only happens 
    at a finite number of primes is {\it not} happening at the prime 
    they are considering. (For example, if $F$ is $\Q,$ we might exclude 
    any prime which divides the denominator of one of the elements
    of $\sigma$: there are only finitely many such primes.)
    \item they assume that the field 
$F$ contains three cube roots of $1.$
\end{itemize}
Note that this second technical hypothesis  unfortunately excludes the case $F=\Q$!

Jiang and Rallis comment that the restriction on cube roots 
should be removable.
A first step towards removing it was taken in the 
master's thesis of Joseph Pleso \cite{pleso2009integrals}.
The hypothesis that $F$ contains three cube roots of $1$
is used
in several different ways in \cite{Jiang-Rallis}, including in an 
argument which reduces the study of arbitrary irreducible cubic
polynomials to the study of those of the form 
$x^3-a.$ Jiang and Rallis break the integral 
$I_{\f p}^{\sigma}(f_{s, \f p}, s)$
into 16 pieces, and the
complexity of these pieces is greater when there are more nonzero 
coefficients in the polynomial attached to $\sigma.$ In 
\cite{pleso2009integrals}, Pleso considers a polynomial of 
the form $x^2-bx+c,$ and computes the analogous 16 
sub-integrals. He then evaluates nine of them. 

In this paper, we restrict ourselves to the important special case $F=\Q.$ In this case $\f o_F=\Z.$
So, if $\f p$ is a prime ideal of $\f o_F,$ then $\f p$ is generated by an ordinary prime $p.$ 
For each prime $p,$ we obtain the field $\Q_p$ of $p$-adic numbers, which we discuss in more 
detail in the next section. The Jiang-Rallis integral $I_p(f_{s,p}^\circ, s)$ is an integral
over five copies of $\Q_p,$ and is also described in more detail in the next section. 
If $p \equiv 1\pmod{6},$ then the field $\Q_p$ has three cube roots of $1.$ In this case, the value of $I_p(f_{s,p}^\circ, s)$ can be deduced from 
the results of Jiang-Rallis, even though the 
field $\Q$ does not have three cube roots of $1.$ 

Thus one, only has to handle $p=2, p=3$ and $p \equiv 5 \pmod{6}.$ In this paper, we 
let $p \equiv 5 \pmod{6},$
compute six of the seven sub-integrals which Pleso did not compute, and
reduce the last one to a conjecture about the number of
integral solutions to a polynomial equation over $\Z/p\Z.$

\begin{conj}\label{conj0}
Let $p$ be a prime, such that $p \equiv 5 \pmod{6}.$ 
Fix $b$ and $c$ in $\Z/p \Z^\times$ such that 
$g(u) := -u^3 +bu +c$ is irreducible. Then 
$$\{(r,y,u) \in (\mathbf{Z}/p\mathbf{Z})^* \times (\mathbf{Z}/p\mathbf{Z}) \times (\mathbf{Z}/p\mathbf{Z}) \mid   g(u) \equiv 3uyr + y^3r^2 -r \mod p\}$$
has $p^2-1$ elements.
\end{conj}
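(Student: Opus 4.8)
The idea is to turn the count into a question about a norm form and then use the structure of $\F_{p^{2}}$. The first step exploits that $p\equiv 2\pmod 3$, so $x\mapsto x^{3}$ is a bijection of $\Z/p\Z$ and every $r\in(\Z/p\Z)^{\times}$ has a unique cube root $r^{1/3}$. I would substitute $\gamma=-r^{1/3}$, $\beta=y\gamma^{2}$, $\alpha=u$; this is a bijection of the ambient set $(\Z/p\Z)^{\times}\times\Z/p\Z\times\Z/p\Z$ onto $\F_p\times\F_p\times\F_p^{\times}$, and a direct computation shows it carries the equation of Conjecture~\ref{conj0} to
$$
\alpha^{3}+\beta^{3}+\gamma^{3}-3\alpha\beta\gamma=b\alpha+c .
$$
The left side is the norm form $N_{R}(\alpha+\beta x+\gamma x^{2})$ of the $\F_p$-algebra $R=\F_p[x]/(x^{3}-1)$. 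Since $p\neq 3$ and $p\equiv 2\pmod 3$, the polynomial $x^{2}+x+1$ is irreducible over $\F_p$, so $R\cong\F_p\times\F_{p^{2}}$; writing $t=\alpha+\beta+\gamma$ and $z=\alpha+\beta\zeta+\gamma\zeta^{2}$ with $\zeta\in\F_{p^{2}}$ a primitive cube root of unity, one has $N_{R}(\alpha+\beta x+\gamma x^{2})=t\cdot N(z)$ and $\alpha=\tfrac13\bigl(t+\on{Tr}(z)\bigr)$, where $N,\on{Tr}$ denote the norm and trace of $\F_{p^{2}}/\F_p$. Thus the equation becomes the relation, linear in $t$,
$$
t\bigl(3N(z)-b\bigr)=b\,\on{Tr}(z)+3c .
$$

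\textbf{Counting.} It is convenient to count all $(\alpha,\beta,\gamma)\in\F_p^{3}$ first and then subtract the $\gamma=0$ locus, which contributes exactly $p$, since $\alpha^{3}+\beta^{3}=b\alpha+c$ has a unique $\beta$ for each $\alpha$. For the full count, fix $z\in\F_{p^{2}}$: the displayed relation determines $t$ uniquely unless $3N(z)=b$, which holds for exactly $p+1$ values of $z$ (the norm map $\F_{p^{2}}^{\times}\to\F_p^{\times}$ has fibres of size $p+1$), and for those $z$ there are $p$ admissible $t$ when $\on{Tr}(z)=-3c/b$ and none otherwise---here $b\neq 0$ is used. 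Hence the full count equals $p^{2}-(p+1)+p\cdot K$ with $K=\#\{z\in\F_{p^{2}}:N(z)=b/3,\ \on{Tr}(z)=-3c/b\}$, and Conjecture~\ref{conj0} is equivalent to $K=2$.

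\textbf{The trace--norm fibre.} For $T,M\in\F_p$, an element $z\in\F_{p^{2}}$ with $\on{Tr}(z)=T$ and $N(z)=M$ has $z,z^{p}$ equal to the roots of $w^{2}-Tw+M$; conversely every root of an irreducible $w^{2}-Tw+M$ has trace $T$ and norm $M$, while the only element of $\F_p$ meeting both conditions is $T/2$, and only when $T^{2}=4M$. So the number of such $z$ is $2$, $1$, or $0$ according as $T^{2}-4M$ is a nonsquare, zero, or a nonzero square in $\F_p$. With $T=-3c/b$ and $M=b/3$ one computes $T^{2}-4M=-(4b^{3}-27c^{2})/(3b^{2})$. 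Two standard facts now finish the argument: since $g$ is irreducible over $\F_p$, its splitting field $\F_{p^{3}}$ is cyclic over $\F_p$ with Frobenius acting on the roots as a $3$-cycle, so the Galois group lies in the alternating group and $\on{disc}(g)=4b^{3}-27c^{2}$ is a nonzero square; and $p\equiv 2\pmod 3$ forces $-3$ to be a nonsquare mod $p$ (equivalently $\sqrt{-3}\notin\F_p$, as $\F_p$ has no primitive cube root of unity). Therefore $T^{2}-4M$ is a nonsquare, $K=2$, and the count is $p^{2}-(p+1)+2p=p^{2}+p-1$; subtracting $p$ for $\gamma=0$ gives $p^{2}-1$.

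\textbf{Main obstacle.} The only genuinely non-routine step is finding the substitution and recognising $\alpha^{3}+\beta^{3}+\gamma^{3}-3\alpha\beta\gamma$ as the norm form of $\F_p[x]/(x^{3}-1)$; after that the argument is short, its only external inputs being the two classical facts just quoted. The points requiring care are the exceptional loci: one must use $b\neq 0$ and the irreducibility of $g$---which also forces $b,c\neq 0$ and $4b^{3}-27c^{2}\neq 0$---to be sure the degenerate subcases ($r$ or $\gamma$ equal to $0$, and $3N(z)=b$) contribute exactly as claimed.
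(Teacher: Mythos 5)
The paper does not prove Conjecture~\ref{conj0}: it is presented as a genuine conjecture, verified computationally up to $p=89$, with a remark that it could in principle be deduced non-elementarily from Xiong's result together with the rest of the paper. The LaTeX source contains a commented-out attempted proof, but the authors flag in it that the attempt goes wrong: it tries to construct a bijection between the solution set and $\F_{p^2}^\times$ by expanding $X^3$ for $X=u-y\omega$, and this cannot work because cubing is $3$-to-$1$ on $\F_{p^2}^\times$. So there is no existing proof to compare against; you are being asked to supply one.

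Your argument is correct and gives exactly the elementary proof the paper asks for. I verified the key steps. (i) The substitution $r=-\gamma^3$, $y=\beta\gamma^{-2}$, $u=\alpha$ does carry the congruence $-u^3+bu+c\equiv 3uyr+y^3r^2-r$ into $\alpha^3+\beta^3+\gamma^3-3\alpha\beta\gamma=b\alpha+c$, and it is a bijection onto $\F_p\times\F_p\times\F_p^\times$ precisely because cubing is a bijection of $\F_p$ for $p\equiv 2\pmod 3$. (ii) Under $\F_p[x]/(x^3-1)\cong\F_p\times\F_{p^2}$ (using $p\nmid 3$ and $x^2+x+1$ irreducible), with $\alpha=\tfrac13\bigl(t+\on{Tr}(z)\bigr)$, the norm-form equation becomes $t\bigl(3N(z)-b\bigr)=b\on{Tr}(z)+3c$, and the $\gamma=0$ locus contributes exactly $p$. (iii) $T^2-4M=-(4b^3-27c^2)/(3b^2)$; since the discriminant $4b^3-27c^2$ of an irreducible cubic over $\F_p$ ($p$ odd) is a nonzero square, $b^2$ is a square, and $-3$ is a nonsquare for $p\equiv 2\pmod 3$, the quantity $T^2-4M$ is a nonsquare, so the trace--norm fibre has exactly two points, and the total is $\bigl(p^2-(p+1)\bigr)+2p-p=p^2-1$. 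I also spot-checked this by brute force for $p=5$, $b=1$, $c=2$ and obtained $24=p^2-1$. The essential difference from the abandoned attempt in the source is that you take the cube root inside $\F_p$ rather than in $\F_{p^2}$, which sidesteps the $3$-to-$1$ obstruction the authors ran into, and you reduce the residual count to a quadratic-residue statement instead of seeking a direct bijection; that is what makes the argument close.
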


This conjecture has since been proved by Victor Scharaschkin, who has kindly permitted us to include his
proof in an appendix to this paper.
Our main theorem can be stated as follows. 
\begin{thm}
Let $I^{\sigma}_p(f_{s,p}^\circ, s)$ be the local integral of Jiang 
and Rallis at an unramified prime $p.$ Assume that 
$p \equiv 5 \pmod{6},$ that the polynomial 
$g$ attached to $\sigma$
is irreducible mod $p.$ 
Then 
$$I^{\sigma}_p(f_{s,p}^\circ, s)
=(1-p^{-3s})(1-p^{-3s+1})(1-p^{-6s+2}).
$$
\end{thm}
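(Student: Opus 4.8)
The plan is to follow the route of Jiang--Rallis and Pleso. The local integral is over five $p$-adic variables; following \cite{Jiang-Rallis} and \cite{pleso2009integrals} one decomposes it as $I^{\sigma}_p(f_{s,p}^\circ, s) = \sum_{i=1}^{16} J_i$, where each $J_i$ is the contribution of one of sixteen subsets of $(\Q_p)^5$ cut out by sign conditions on the valuations of the coordinates and of the values of the cubic $g$ attached to $\sigma$ --- which, as in \cite{pleso2009integrals} and Conjecture \ref{conj0}, we take in the form $g(u) = -u^3 + bu + c$ with $b, c \in \Z_p^\times$ (forced by irreducibility mod $p$, since cubing is bijective on $\F_p^\times$ when $p \equiv 2 \bmod 3$) --- at certain linear combinations of the coordinates. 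Nine of these sixteen sub-integrals were evaluated by Pleso in \cite{pleso2009integrals}; we quote his formulas, specialized to $p \equiv 5 \pmod 6$ and $g$ irreducible mod $p$. The work that remains is: evaluate six of the other seven $J_i$; reduce the last one, $J_{i_0}$, to Conjecture \ref{conj0}; and sum.

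For the six new sub-integrals the computation has a uniform shape. After an appropriate change of variables, $J_i$ becomes a sum over a cone of lattice points $\mathbf{k} = (k_1, \dots, k_m) \in \Z_{\ge 0}^m$ of a term $N_i(\mathbf{k}) \, p^{-\ell_i(\mathbf{k}, s)}$, where $\ell_i$ is linear and $N_i(\mathbf{k})$ counts the solutions, modulo a suitable power of $p$, of the congruences defining the $i$-th region. Because $p$ is unramified and $g$ is irreducible over $\F_p$ --- hence separable, so $\operatorname{disc} g \not\equiv 0$, and $\F_p[u]/(g) \cong \F_{p^3}$ --- Hensel's lemma collapses each such congruence count to a count mod $p$, which one evaluates explicitly, typically via the norm and trace maps of $\F_{p^3}/\F_p$, or, for the pieces in which $u^2 + u + 1$ intervenes, via $\F_{p^2}$ (noting that $x^2 + x + 1$ is irreducible mod $p$ precisely because $p \equiv 5 \pmod 6$). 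Each such $J_i$ then reduces to a finite sum of geometric series in $p^{-s}$ and evaluates to an explicit rational function.

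For the sixteenth sub-integral $J_{i_0}$, the same reduction isolates exactly the innermost count $\#\{(r,y,u) \in (\Z/p\Z)^\times \times (\Z/p\Z)^2 : g(u) \equiv 3uyr + y^3 r^2 - r \bmod p\}$, with the outer congruence levels again absorbed by Hensel; Conjecture \ref{conj0} evaluates this count as $p^2 - 1$, which is exactly what makes the surviving series geometric and gives $J_{i_0}$ in closed form. Summing the sixteen contributions and clearing denominators, the expectation is that the $\zeta$-type factors all cancel, leaving $(1-p^{-3s})(1-p^{-3s+1})(1-p^{-6s+2})$. A useful advance check: since $g$ irreducible mod $p$ means $p$ is inert in $K$, the $p$-Euler factor of $\zeta_K(3s-1)/\zeta_F(3s-1)$ is $(1-p^{-3s+1})/(1-p^{-9s+3})$, and multiplying it by the $p$-Euler factor $(1-p^{-3s})(1-p^{-6s+2})(1-p^{-9s+3})$ of the normalizing factor $1/\bigl(\zeta_F(3s)\zeta_F(6s-2)\zeta_F(9s-3)\bigr)$ gives precisely $(1-p^{-3s})(1-p^{-3s+1})(1-p^{-6s+2})$. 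So the value is dictated by the global theorem of Jiang--Rallis, and the substance of the present theorem lies entirely in the local identity.

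The main obstacle is the point counting behind the six new $J_i$ and, ultimately, behind Conjecture \ref{conj0}: a priori these counts could depend on the coefficients $b, c$ of $g$, and the real work is to show that they depend only on data fixed by the hypotheses (the splitting type of $g$ mod $p$ --- here inert --- and, where it matters, the square class of $\operatorname{disc} g$, which for an irreducible cubic over $\F_p$ is automatically a square), hence are uniform in $b, c$. For the six pieces I expect an explicit parametrization of each solution set by the unit group, or the norm-one subgroup, of $\F_{p^3}^\times$ or $\F_{p^2}^\times$ to deliver this uniformity; the count in Conjecture \ref{conj0} is the one for which such a parametrization has so far resisted, which is why it is isolated rather than proved here. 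A secondary and more routine difficulty is the bookkeeping: checking that the sixteen regions partition $(\Q_p)^5$ up to measure zero, that all the geometric series converge on a common right half-plane in $s$, and that after cancellation the rational function is indeed the displayed polynomial.
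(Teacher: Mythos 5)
You have correctly identified the paper's strategy: the sixteen-piece decomposition $I_{\pm\pm\pm\pm}$ by valuation sign, nine pieces quoted from Pleso, six new pieces computed directly, and one ($I_{-+-+}$) reduced to Conjecture~\ref{conj0}; your advance consistency check against the predicted Euler factor also appears in the paper's introduction. But your description of what the six new computations look like does not match the actual mechanism, and there the gap is real. You anticipate ``sums over lattice cones weighted by finite-field point counts parametrized by $\F_{p^3}^\times$ or $\F_{p^2}^\times$,'' but of the six new pieces only $I_{+-++}$ involves such a count, and what it counts are solutions of $w^2+3wy+3y^2=b$ via the norm map of $\F_{p^2}$ --- neither $g$ nor $\F_{p^3}$ enters. $I_{-+++}$ reduces to counting roots of $g$ mod $p$, which is $0$ by irreducibility. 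The remaining four, $I_{--++}$, $I_{--+-}$, $I_{---+}$, $I_{----}$, simply vanish, and the reason is not a zero point count. For three of them, a strong-triangle-inequality estimate (Lemma~\ref{lem8}, Lemma~\ref{lem:minord}) shows that after a change of variables the argument of $\psi$ has $p$-adic order exactly $3\ord(u)\le -3$, so the $\nu$-integral produced by Corollary~\ref{cor1} and Lemma~\ref{lem3} is identically zero. For $I_{---+}$ a more delicate Gauss-sum-type identity is needed: one isolates $H_1(d,a)=\int_{\Z_p^*}\psi(d(x^2+ax^3))\,dx$, shows $H_1(d,a)=H(d)$ when $a\in p\Z_p$ via the bijection of Lemma~\ref{cor4}, and shows $H(d)=0$ when $\ord(d)<-1$ (Lemma~\ref{lem9}). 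These vanishing arguments are the substantive new ingredient of the proof, and nothing in your sketch would produce them --- looking for parametrizations by $\F_{p^3}^\times$ would not reveal that the integral is killed by an order estimate before any counting starts. So while your plan is aimed in the right direction and organized correctly, it is a plan rather than a proof, and the missing steps are precisely where the real work lives.
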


The assumption that the polynomial $g$ is irreducible
mod $p$ is a natural one. If $g$ is reducible in the field $\Q_p$, then 
the integral has already been computed by Jiang and Rallis, who did not 
use their hypothesis on the cube roots of 1 in their treatment of this
case. And, for any polynomial $f$ with integer coefficients, the set 
of primes $p$ such that $f$ is reducible mod $p$ but not in $\Q_p$
is finite. 

So, by assuming that  $p \equiv 5\pmod{6}$ and
and that $g$ is irreducible,
we exclude only the cases already handled by
Jiang-Rallis and a finite number of exceptions.

The right-hand side of our identity is the expected one. Indeed, 
when the polynomial attached to $\sigma$ is irreducible mod $p$, 
and $K$ is the cubic extension attached to it, there is 
a single prime ideal $\f p$ in $\f o_F$ such that $N\f p=p^3.$
Thus, the contribution to the Euler product for $\zeta_F(3s-1)$
corresponding to the prime $p$ is $(1-(p^3)^{-3s+1})^{-1},$
or $(1-p^{-9s+3}),$ and so the contribution of the prime $p$ to 
the Euler product for 
$$
\frac{\zeta_K(3s-1)}{\zeta(3s)\zeta(3s-1)\zeta(6s-2)\zeta(9s-3)}.
$$
is 
$$
\frac{(1-p^{-9s+3})^{-1}}
{(1-p^{-3s})^{-1}(1-p^{-3s+1})^{-1}(1-p^{-6s+2})^{-1}(1-p^{-9s+3})^{-1}},
$$
which is precisely the right-hand side of our identity.

The integral
$I_{\f p}^{\sigma}(f_{s, \f p}^\circ , s)$
was previously computed
by Xiong in \cite{Xiong}
using 
 a totally different method which works 
 for any number field $F.$ Thus, our  
 ``Conjecture \ref{conj0}'' may actually be deduced 
 from Xiong's result, using the results of this paper. 
Scharaschkin's elementary proof of conjecture \ref{conj0}  completes a second, more elementary proof of 
the special case $F=\Q$ of Xiong's result.

\section{Background and Notation}
\subsection{The field of \texorpdfstring{$p$}{p}-adic numbers} 
In this section we review some basic definitions about the field
$\Q_p$ of $p$-adic numbes, where $p\in \Z$ is a prime number. 
Some references for a more detailed introduction to $\Q_p$  are
\cite{Katok}, \cite{Sally-FundMathAn},\cite{Sally-LettMathPhys},\cite{Sally-ToolsOfTrade}, \cite{Goldfeld-Hundleyv1}.

\subsubsection{Analytic approach}
The first, more analytic approach to defining $\Q_p$ is based 
on defining a non-standard absolute value on the field $\Q$ of 
rational numbers, which depends on a prime $p.$
First, let $\on{ord}_p: \Q^{\times} \to \Z$
be the unique function such that, for each element $x$ of $\Q^\times,$ we have
$x = p^{\ord_p(x)}\frac ab$ for some $a, b \in \Z,$ neither of which 
is divisible by $p.$ It is conventional to set the $\on{ord}_p(0) = +\infty$ to ensure
$\on{ord}_p$ satisfies all the axioms of a valuation. We then define the $p$-adic absolute value $\Q \to [0,\infty)$ by 
$$|x|_p = \left\{
   \begin{array}{cc}
      \frac{1}{p^{\ord_p(x)}} & \text{if } x \neq 0,\\
      0, & \text{if } x = 0. \\
   \end{array}\right.$$
This formula does indeed give a well-defined absolute value on $\Q,$
and indeed, one which satisfies the ``strong triangle inequality'': 
$$|a+b|_p \le \max( |a|_p, |b|_p)$$
with equality whenever $|a|_p \ne |b|_p.$
(Equivalently, $\ord_p(a+b) \ge \min( \ord_p(a), \ord_p(b)),$ with equality whenever
$\ord_p(a) \ne \ord_p(b).$)
We may the define the field 
$\mathbf{Q}_p$ of $p$-adic numbers as the
completion of $\Q$ with respect to $|\ |_p$; we define 
``Cauchy'' and ``convergent'' using $|\ |_p$, show that having a difference which converges to $0$ is an equivalence relation on 
Cauchy sequences, and let $\Q_p$ be the 
set of equivalence classes of Cauchy sequences, relative to this
equivalence relation. Thus defined, $\Q_p$ is indeed a field, 
and addition, subtraction, multiplication, etc., as well as $|\ |_p$
have unique continuous extensions from $\Q$ to $\Q_p$ which we 
denote with the same symbols.

The ring of $\mathbf{Z}_p$ $p$-adic integers may then be defined as $\{x \in \mathbf{Q}_p : |x| \leq 1 \}.$  This is indeed a subring 
of $\Q_p,$ and it is also the closure of $\Z$ in the topology 
on $\Q_p,$ and is compact in this topology. 
This ring has a unique maximal, ideal, namely $\{x \in \mathbf{Q}_p : |x| < 1 \},$ which is principal and generated by $p$; its group of units $\mathbf{Z}_p^*$ is thus $\{x \in \mathbf{Q}_p : |x| = 1 \}.$
Finally, $\mathbf{Q}_p - \mathbf{Z}_p = \{x \in \mathbf{Q}_p : |x| > 1 \}.$ 

The field $\Q_p$ possesses a measure, unique up to multiplication by 
a positive scalar, which assigns open sets positive measure, 
assigns compact sets finite measure, and is invariant under additive 
translation. We can pin it down uniquely by specifying that the 
measure of $\Z_p$ is $1.$ This measure is called the additive Haar
measure and we denote it $dx$ (or $dy$ if the variable is $y,$ etc).

\subsubsection{Algebraic Approach}
For each positive integer $k,$ the set $\{x \in \Z_p: |x| \le p^{-k} \}$ is the principal ideal $p^k\Z_p$ in $\Z_p$ generated by $p^k.$ It satisfies $\Z_p/p^k\Z_p \cong \Z/p^k\Z.$ This actually permits
one to give an equivalent, algebraic definition of $\Z_p$ and $\Q_p.$
Specifically, one may define 
$\Z_p$ as the projective limit $\varprojlim_k \Z/p^k \Z$ of 
the finite rings $\Z/p^k \Z, \ k = 1,2,3 \dots,$ i.e., the set of 
infinite sequences $(a_k)_{k=1}^\infty$ such that $a_k \in \Z/p^k\Z$ 
and if $\ell > k$ then $a_k$ is the image of $a_\ell$ under the 
``reduction mod $p^k$'' map $\Z/p^\ell \Z \to \Z/p^k \Z.$
If this alternate definition of $\Z_p$ is used, then $\Q_p$ may be 
recovered as the field of fractions of $\Z_p.$

Via either approach, each nonzero element of $\Q_p$ has a unique
$p$-adic expansion: 
$$\sum_{n=N}^\infty d_n p^n, \qquad  N \in \Z, \qquad d_n \in \{0, \dots, p-1\}, \text{ each }n, \qquad d_N \ne 0.$$
Then $\sum_{n=N}^\infty d_n p^n$ lies in $\Z_p$ if and only if 
$N \ge 0.$

\subsubsection{More on Haar measure}\label{ss:Haar}

As we have mentioned, the Haar measure is invariant under 
additive translation and assigns the ring $\Z_p$ a measure of $1.$
For each integer $k$ the set 
$$p^k \Z_p = \{ p^kx : x \in \Z_p\}$$
is at once an additive subgroup of $\Q_p,$ the closed ball
of radius $p^{-k}$ centered at $0,$
and the open ball of radius $p^{1-k}$ centered at $0.$
For each other $a\in \Q_p$ the coset $a+p^k\Z_p$ is 
 the closed ball
of radius $p^{-k}$ centered at $a,$
and the open ball of radius $p^{1-k}$ centered at $a.$

By invariance, the various cosets $a+p^k\Z_p,$ as $a$ varies, 
all have the same measure. If $k > 0$ then 
$\Z_p$ 
is a union of $p^{k}$ cosets of $p^k \Z_p.$
If $k < 0$ then $p^k \Z_p$ is a union of $p^{-k}$ cosets of $\Z_p.$
In either case we can deduce that the Haar measure
of $a+p^k \Z_p$ is $p^{-k}$ for all $k \in \Z.$

The cosets $a+p^k\Z_p, \ (a \in \Q_p, k \in \Z)$ play a role in the 
theory of Haar measure on $\Q_p$ that is similar to the role 
played by intervals in the theory of Lebesgue measure 
on $\R.$ For example 
the ``step functions'' used to define the 
integral are finite linear combinations 
of characteristic functions of these balls:
\begin{equation}
    \label{eq:stepFunction}
\sum_{i=1}^n c_i \mathbbm{1}_{a_i+p^{k_i}  \Z_p}.
\end{equation}
(Here, and throughout this paper, 
$\mathbbm{1}_X$ is the characteristic function of the set $X.$)
Their integrals are completely determined by the volumes of these balls, which we just computed. 

It follows that if $X,$ and $Y$ are two Haar-measurable subsets of $\Q_p$
and $\varphi: X \to Y$ is a function which maps each coset
$a+p^k \Z_p$ to another coset $b+p^k \Z_p$ of the same size
(i.e., with the same $k$), then $\varphi$ preserves the Haar measure, 
so that 
$$
\int_X h(\varphi(x))\, dx = \int_Y h(y) \, dy,
$$
for any measurable function $h.$ 

If $X$ and $Y$ happen to be subsets of $\Z_p,$ then the cosets
$a +p^k \Z_p$ are precisely the elements of the quotient ring 
$\Z/p^k \Z,$ so the measure-preserving property can be interpreted
as giving a well-defined bijection modulo $p^k$ for each $k.$ 

In what follows, we shall make use not only of Haar measure on $\Q_p$ 
but also the product measure defined on several copies of $\Q_p.$
In this context we shall frequently refer to the measure of a
measurable set $X$ as its volume, and denote it $\on{Vol}(X).$

\subsection{Hensel's Lemma}
\begin{lem}
For 
$$f(x) = \sum_{i=0}^d c_i x^i \in \Z_p[x],$$
let 
$$f'(x) =\sum_{i=1}^d i c_i x^{i-1} $$
be the derivative. For $a_0 \in \Z_p,$ suppose that 
$$f(a_0) \equiv 0 \pmod{p}, \qquad \text{and} \qquad f'(a_0) \not \equiv 0 \pmod{p}.$$
Then 
$$\{ a \in \Z_p: a \equiv a_0 \pmod{p}, \text{ and } f(a) = 0 \}$$
has exactly one element. 
\end{lem}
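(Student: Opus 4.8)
The plan is to use Newton's method of successive approximation, exactly as one does over $\R$, but with the strong triangle inequality doing all the analytic work so that convergence comes essentially for free. Two preliminary observations underpin everything. First, if $a \in \Z_p$ satisfies $a \equiv a_0 \pmod p$, then $f'(a) \equiv f'(a_0) \pmod p$ (reduction mod $p$ is a ring homomorphism on $\Z_p$ and $f' \in \Z_p[x]$), so $f'(a) \not\equiv 0 \pmod p$; equivalently $|f'(a)|_p = 1$, so $f'(a)$ is a unit in $\Z_p$ and the ``Newton quotient'' $f(a)/f'(a)$ lies in $\Z_p$ whenever $f(a)$ does. Second, expanding each $(a+h)^i$ by the binomial theorem and collecting powers of $h$ yields a polynomial identity
$$
f(a+h) = f(a) + f'(a)\,h + h^2\, R(a,h), \qquad R \in \Z[a,h] \subseteq \Z_p[a,h],
$$
valid because every binomial coefficient is an ordinary integer; the only point of this identity is that the remainder term never leaves $\Z_p$, so $|R(a,h)|_p \le 1$ for $a,h \in \Z_p$.

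For existence, define $a_{n+1} = a_n - f(a_n)/f'(a_n)$ for $n \ge 0$ and prove by induction that $a_n \in \Z_p$, that $a_n \equiv a_0 \pmod p$, and that $|f(a_n)|_p \le p^{-(n+1)}$. The base case is the hypothesis. For the step, set $h_n = -f(a_n)/f'(a_n)$; by the first observation $h_n \in \Z_p$ and $|h_n|_p = |f(a_n)|_p \le p^{-(n+1)}$, so $a_{n+1} = a_n + h_n \in \Z_p$ and $a_{n+1} \equiv a_n \equiv a_0 \pmod p$. Substituting $h = h_n$ into the identity above, the linear term cancels $f(a_n)$ exactly, leaving $f(a_{n+1}) = h_n^2\, R(a_n,h_n)$, whence $|f(a_{n+1})|_p \le |h_n|_p^2 \le p^{-2(n+1)} \le p^{-(n+2)}$. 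This closes the induction. Since $|a_{n+1}-a_n|_p = |f(a_n)|_p \to 0$, the sequence $(a_n)$ is Cauchy, hence converges (completeness of $\Q_p$) to some $a$ lying in $\Z_p$ and in $a_0 + p\Z_p$, both being closed. As $f$ is a polynomial it is continuous, so $f(a) = \lim_n f(a_n) = 0$.

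For uniqueness, suppose $a,b \in \Z_p$ with $a \equiv b \equiv a_0 \pmod p$ and $f(a) = f(b) = 0$. Applying the Taylor identity at $b$ with $h = a-b$ gives
$$
0 = f(a) = f(b) + f'(b)(a-b) + (a-b)^2 R(b,a-b) = (a-b)\bigl(f'(b) + (a-b)\,R(b,a-b)\bigr).
$$
Since $a - b \equiv 0 \pmod p$, the second factor is $\equiv f'(b) \pmod p$, and $f'(b)$ is a unit in $\Z_p$ by the first observation; hence the second factor is a unit, forcing $a - b = 0$. (Alternatively, one could lift solutions level by level through $\Z/p^k\Z$ using the algebraic description of $\Z_p$ and pass to the inverse limit.) There is no deep obstacle in any of this; the one thing demanding care is the bookkeeping in the two preliminary observations, namely checking that the Newton quotient $f(a_n)/f'(a_n)$ and the Taylor remainder $R(a_n,h_n)$ genuinely stay in $\Z_p$, since after that the strong triangle inequality makes the iteration converge automatically.
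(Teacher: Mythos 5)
Your proof is correct and complete. The paper itself does not prove this lemma; it simply cites \cite{Katok}, Theorem 1.39. Your Newton-iteration argument is the standard one that such references give: the Taylor identity $f(a+h)=f(a)+f'(a)h+h^2R(a,h)$ with $R\in\Z_p[a,h]$, the observation that $f'$ stays a unit on the residue disc $a_0+p\Z_p$, the quadratic gain $|f(a_{n+1})|_p\le|f(a_n)|_p^2$, and the ultrametric Cauchy criterion. The uniqueness step via $0=(a-b)(f'(b)+(a-b)R(b,a-b))$ with the second factor a unit is likewise the usual one. All the bookkeeping you flagged as needing care (Newton quotient and Taylor remainder staying in $\Z_p$) is handled correctly. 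In short, you have supplied the proof the paper delegates to a textbook, and it is sound.
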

\begin{proof}
    See \cite{Katok}, theorem 1.39.
\end{proof}
\begin{cor}
    Take $f(x)\in \Z_p[x]$ and let $\ol f(x) \in \Z/p\Z[x]$ be the polynomial obtained by reducing each of the coefficients of $f$ modulo $p.$ Suppose that $f$ is irreducible, while $\ol f$ vanishes at $a \in \Z/p\Z.$ Then $a$ is actually a
    double zero of $\ol f.$
\end{cor}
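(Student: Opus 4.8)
The plan is to prove the corollary by contraposition, reducing it to a single application of the preceding lemma. Throughout, $\ol f(a)=0$ is a standing hypothesis, so the claim is that if $a$ is \emph{not} a zero of $\ol f$ of multiplicity $\ge 2$, then $f$ is reducible. First I would recall the standard fact that, over the field $\Z/p\Z$ (indeed over any field, in any characteristic), a zero $a$ of a nonzero polynomial $\ol f$ has multiplicity $\ge 2$ if and only if $\ol f'(a)=0$ as well: writing $\ol f(x)=(x-a)^m h(x)$ with $h(a)\ne 0$ and differentiating, one gets $\ol f'(a)=m\cdot h(a)\ne 0$ when $m=1$ and $\ol f'(a)=0$ when $m\ge 2$. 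So it suffices to show: if $\ol f(a)=0$ and $\ol f'(a)\ne 0$, then $f$ is reducible. I would also note that we may assume $\deg f\ge 2$, which is the case of interest (the corollary is applied to a cubic $g$) and in fact the only case in which the conclusion can even hold.

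Next I would lift to $\Z_p$. Choose $a_0\in\{0,1,\dots,p-1\}\subseteq\Z_p$ reducing to $a$. Since reduction modulo $p$ is the ring homomorphism $\Z_p\to\Z_p/p\Z_p\cong\Z/p\Z$ and the formal derivative commutes with reducing the coefficients of a polynomial, we obtain $f(a_0)\equiv\ol f(a)\equiv 0\pmod p$ and $f'(a_0)\equiv\ol f'(a)\not\equiv 0\pmod p$. These are exactly the hypotheses of the Lemma (Hensel's lemma), so there is an element $\alpha\in\Z_p$ with $\alpha\equiv a_0\pmod p$ and $f(\alpha)=0$.

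Finally, since $\alpha\in\Z_p\subseteq\Q_p$ is a root of $f$, the monic polynomial $x-\alpha$ divides $f$ in $\Q_p[x]$; writing $f(x)=(x-\alpha)\,q(x)$ with $q\in\Q_p[x]$, we have $\deg q=\deg f-1\ge 1$, so $f$ factors as a product of two polynomials of strictly smaller positive degree. This contradicts the irreducibility of $f$. Hence no zero of $\ol f$ in $\Z/p\Z$ can have multiplicity $1$, which is the assertion of the corollary.

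I do not expect a real obstacle: once the preceding lemma is available the argument is short and essentially formal. The only points that need a word of care are the three standard facts used above — the derivative criterion for multiple roots in characteristic $p$, the compatibility of reduction mod $p$ with evaluation and differentiation, and (if one prefers to read ``irreducible'' as irreducible in $\Z_p[x]$ rather than in $\Q_p[x]$) Gauss's lemma, which guarantees that the factorization $f=(x-\alpha)q$ obtained in $\Q_p[x]$ still witnesses reducibility.
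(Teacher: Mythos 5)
Your proof is correct and is the argument the paper intends: the corollary carries no written proof, but its placement immediately after Hensel's Lemma signals exactly the contrapositive deduction you give (a simple zero of $\ol f$ lifts by Hensel to a $\Z_p$-root of $f$, which contradicts irreducibility once $\deg f \ge 2$). The degree caveat you flag is a minor omission in the paper's statement and is irrelevant to the cubic application.
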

\subsection{Additive characters}
The function 
$$e_p(x) = \begin{cases}
\exp\left(-2\pi i\sum_{n=N}^{-1} d_n p^n\right), 
& x = \sum_{n=N}^\infty d_n p^n,\\
1,& x \in \Z_p,
\end{cases}
$$
is a well-defined continuous homomorphism $\Q_p\to S^1:= \{ z \in \C: |z|=1\}.$

It should be noted that if $F$ is a number field -- that is, a 
finite extension of $\Q$ -- then it is possible to define 
absolute values which are analogous to $|\ |_p$ on the field 
$F$ and complete $F$ with respect to these absolute values
and obtain fields which share many of their properties with 
$\Q_p.$ 
These fields are known as non-Archimedean local fields.
Notably, many of the results of Jiang and Rallis are 
proved for any number field or for any non-Archimedean local 
field. Others are proved for any field in one of these classes which contains three cube roots of one-- a technical 
hypothesis which excludes $\Q,$ and excludes $\Q_p$ if $p \equiv 2 \pmod{3}.$

\subsection{The adele ring} 
\label{ss:adele ring}
In this section we briefly describe the so-called adele ring of $\Q.$
It's worth noting that this theory may be extended to define adele rings
attached to finite extensions of $\Q$ as well. Some good references 
are 
\cite{Lang}, \cite{CassFroh}, and \cite{Goldfeld-Hundleyv1}. 

In order to set up the definition, it is useful to mention an important
fact. It follows from a theorem of 
Ostrowski (see \cite{CassFroh}, p. 45) that every nontrivial 
absolute value 
on $\Q$ is either the standard one (which gives $\R$ as the 
completion of $\Q$), or the absolute value $|\ |_p$ for some 
prime $p$, or equivalent to $|\ |_p$ for some $p$ in the sense that
it induces the same topology on $\Q.$ (Concretely, if we take $c \in (1, \infty)$, which is not equal to $p$, then replacing ``$p^{-\ord_p(x)}$'' by ``$c^{-\ord_p(x)}$'' in the definition of $|\ |_p$ produces an absolute value which is not the same as $|\ |_p,$ but 
is equivalent to it.)

Thus, the set of topologically-distinct completions of $\Q$  
consists of $\R$ and one element for each prime $p.$ 
It is convenient to introduce an index set which consists of the 
primes and one additional element corresponding to $\R.$
Historically, the most common choice for the index corresponding 
to the completion $\R$ is ``$\infty.$'' In keeping with this
tradition, we let $\Q_\infty = \R$ and $|\ |_\infty: \R \to [0, \infty)$
be the usual absolute value. 

We now consider the product of all the topologically-distinct completions of $\Q$:
$$\R \times \prod_p \Q_p.$$
An element of this ring can be thought of as an infinite sequence 
$x = (x_\infty, x_2, x_3, x_5, x_7, \dots, x_p, \dots )$
such that $x_\infty \in \R$ while $x_p \in \Q_p$ for each prime $p.$
The adele ring, $\A$ is then 
$$
\{ x \in \R \times \prod_p \Q_p: \text{ the set of primes } p \text{ such that } x_p \notin \Z_p \text{ is finite}\}.
$$

Notice that if $a$ is an element of $\Q$ then $a$ is an element of 
every completion of $\Q$ and hence $(a,a,a,a,a, \dots, a, \dots )$
is an element of $\R \times \prod_p \Q_p.$ In fact, it is an element 
of $\A$ because $a \in \Z_p$ for any prime $p$ which 
does not divide the denominator of $a$ when written in lowest terms.
 We regard $\Q$ as a 
subring of $\A$ by identifying $a \in \Q$ with 
$(a,a,a,a,a, \dots, a, \dots )\in \A.$

The group $\A^\times$ of units of $\A$ is called the ideles. 
It is described explicitly as 
$$
\{ x \in \R^\times \times \prod_p \Q_p^\times: \text{ the set of primes } p \text{ such that } x_p \notin \Z_p^\times \text{ is finite}\}.
$$
Notice that if $a \in \Q^\times$ then $(a,a,a,a,a, \dots, a, \dots )\in \A^\times,$ since $a \in \Z_p^\times$
for any prime $p$ which divides neither
the numerator nor the denominator of $a$ (in lowest terms). 

Like $\Q_p,$ the ring $\A$ possesses a measure, called Haar measure, 
which is unique up to multiplication by 
a positive scalar,  assigns open sets positive measure, 
assigns compact sets finite measure, and is invariant under additive 
translation. It can be pinned down uniquely by requiring that on each 
of the subsets 
$$
\R \times \prod_{p< T}\Q_p \times \prod_{p \ge T}\Z_p,  \qquad (T \in (0,\infty) ),  
$$
it is given by the product of Lebesgue measure on $\R$ and Haar measure
on $\Q_p$ or $\Z_p.$ (Notice that $\A$ is the union of these 
subsets.)

We shall also need to use the idelic absolute value, which is 
defined as follows. Suppose that $x \in \A^\times.$
Let $S = \{ p\text{ prime}\mid x_p \notin \Z_p^\times \}.$ 
Then we define $|x| = |x_\infty|_\infty \cdot \prod_{p \in S} |x_p|_p.$
Here $|x_\infty|_\infty$ is just the usual absolute value of $x_\infty,$
which is an element of $\R^\times.$
Notice that if $S'$ is any finite set which contains $S$ then
$|x| = |x_\infty| \cdot \prod_{p \in S'} |x_p|_p$ (since we've only 
added a finite number of $1$'s to the product). Indeed, the infinite
product $|x_\infty| \cdot \prod_{\text{all primes }p } |x_p|_p$
converges to $|x|$ because all but finitely many terms in the product 
are $1.$ The idelic absolute value has the key properties that if $x,y \in \A^\times$ then $|xy| = |x||y|,$ and that if $a \in \Q^\times$
then $|a| = 1.$

Finally, we would like to define a continuous homomorphism 
$\A \to S^1$ which is trivial on the embedded copy of $\Q.$
Previously we introduced $e_p : \Q_p \to S^1.$ Take 
$x = (x_\infty, x_2, x_3, \dots, x_p, \dots) \in \A.$
Then, for all but a finite number of primes $p,$ 
$x_p\in \Z_p$ and hence $e_p(x_p) = 1.$ Hence 
\begin{equation}\label{eq:e(x)}
e(x) = e^{2\pi i x_\infty} \prod_p e_p(x_p)\end{equation}
does not have any convergence issues, and gives a well defined 
continuous homomorphism $\A \to S^1.$ It can be shown that 
it is trivial on $\Q.$

\subsection{The group \texorpdfstring{$G_2$}{G₂}} 
In this section we briefly introduce a certain matrix group 
which we call ``$G_2.$'' Some references for this 
material are 
\cite{FultonHarris},\cite{borel-LAG},\cite{humphreys-LAG},
\cite{springer-LAG},\cite{Weissman}.

The name ``$G_2$'' applies, first and foremost, to a root system. 
A root system is a finite spanning set $\Phi$ in a real inner product space (one may take it to be $\R^n$ with the usual dot product) such that
\begin{enumerate}
    \item the reflection $\alpha - 2 \frac{\alpha \cdot \beta}{\beta \cdot \beta} \beta$ of $\alpha$ in the plane orthogonal to $\beta$
    is an element of $\Phi$ for all $\alpha, \beta 
    \in \Phi,$
    \item the quantity $2 \frac{\alpha \cdot \beta}{\beta \cdot \beta}$
    is an integer for all $\alpha, \beta \in \Phi,$
    \item if $\alpha \in \Phi, k \in \R$ and $k \alpha \in \Phi,$ then $k=1$ or $-1.$
\end{enumerate}
The elements of the set $\Phi$ are called roots.
The second requirement has an interesting geometric consequence. 
Indeed, if $\alpha$ and $\beta$ are two roots,
and if $\theta$ is the angle between them then 
$$\cos^2\theta = \frac{\alpha \cdot \beta}{\beta \cdot \beta}
\frac{\alpha \cdot \beta}{\alpha \cdot \alpha}
\in \{ 0, \frac 14, \frac 12, \frac 34 , 1\}.
$$
Up to rotation and scaling, there are only four root systems in $\R^2.$
They are shown below.
\begin{center}
\begin{tikzpicture}
\draw[<->] (-1,0)--(1,0);
\draw[<->] (0,-1)--(0,1);
\draw[<->] (2,0)--(4,0);
\draw[<->] (2.5,0.866)--(3.5,-0.866);
\draw[<->] (2.5,-0.866)--(3.5,0.866);
\draw[<->] (5,0)--(7,0);
\draw[<->] (6,-1)--(6,1);
\draw[<->] (5,-1)--(7,1);
\draw[<->] (7,-1)--(5,1);
\draw[<->] (8,0)--(10,0);
\draw[<->] (8.5,0.866)--(9.5,-0.866);
\draw[<->] (8.5,-0.866)--(9.5,0.866);
\draw[<->] (8.5,0.288675)--(9.5,-0.288675);
\draw[<->] (8.5,-0.288675)--(9.5,0.288675);
\draw[<->] (9,-0.57735)--(9,0.57735);
\end{tikzpicture}
\end{center}
The root system at the far right is $G_2.$ It has 12 roots.

Root systems play an important role in the classification theory 
of various mathematical objects, including Lie groups, Lie 
algebras, and linear algebraic groups. For us, the important 
object is a linear algebraic group. 

A linear algebraic group $G$ over a field $F$ is defined by a positive integer $n$
and a finite set $S$ of polynomial equations, with coefficients in $F$ where the variables are the entries of an $n\times n$
matrix, and  
$$G(R):=\{ g \in GL_n(R): S \}$$
is closed under matrix multiplication and inversion, 
i.e., a subgroup of $GL_n(R)$, for any commutative $F$-algebra
with unity $R.$ For example, if $X$ is a fixed $n\times n$
matrix with entries in $F$ then 
$$\{ g \in GL_n(R): gXg^t = X\}$$
(here $^t$ denotes transpose)
is well-defined a subgroup of $GL_n(R)$ for 
any commutative $F$-algebra with unity $R.$ Thus $G$ 
itself is not actually a group, but a mapping from 
$F$-algebras to groups. 

In order to describe the results of Jiang, Rallis, and Pleso, we 
will not need to worry about the specific set of polynomial 
equations that defines $G_2.$ The key point about our ``$G_2$'' 
being a linear algebraic group is that it is not a single group, 
but a mapping from $\Q$-algebras to groups, which we will use to define 
various groups: $G_2(\Q),\ G_2(\Q_p), \ G_2(\A),$ etc., and each 
subgroup of it which we introduce is a similar object.

\subsection{Some Key Subgroups of \texorpdfstring{$G_2$}{G₂}}

Without getting too bogged down in the details of how root systems
are used in the classification of linear algebraic groups, let us 
briefly mention that our linear algebraic group $G_2$ has a one 
dimensional subgroup attached to each of the roots in the root 
system $G_2.$ For example, if we use the same embedding of $G_2$ into  $GL_8$ as Pleso \cite{pleso2009integrals},  then one of the six long roots, which we denote $\alpha,$ is attached to the group of all matrices of the form 
$$
x_\alpha(a) = \begin{pmatrix}1 & 0 & 0 & 0 & 0 & 0 & 0 & 0 \\
0 & 1 & a & 0 & 0 & 0 & 0 & 0 \\
0 & 0 & 1 & 0 & 0 & 0 & 0 & 0 \\
0 & 0 & 0 & 1 & 0 & 0 & 0 & 0 \\
0 & 0 & 0 & 0 & 1 & 0 & 0 & 0 \\
0 & 0 & 0 & 0 & 0 & 1 & -a & 0 \\
0 & 0 & 0 & 0 & 0 & 0 & 1 & 0 \\
0 & 0 & 0 & 0 & 0 & 0 & 0 & 1
\end{pmatrix},
$$
and one of the six short roots, which we denote $\beta$,
is attached to the group of all matrices of the form 
$$x_\beta(b) = \begin{pmatrix}1 & b & 0 & 0 & 0 & 0 & 0 & 0 \\
0 & 1 & 0 & 0 & 0 & 0 & 0 & 0 \\
0 & 0 & 1 & b & b & -b^{2} & 0 & 0 \\
0 & 0 & 0 & 1 & 0 & -b & 0 & 0 \\
0 & 0 & 0 & 0 & 1 & -b & 0 & 0 \\
0 & 0 & 0 & 0 & 0 & 1 & 0 & 0 \\
0 & 0 & 0 & 0 & 0 & 0 & 1 & -b \\
0 & 0 & 0 & 0 & 0 & 0 & 0 & 1\end{pmatrix}.$$

We may assume that the root system is embeeded in 
the plane so that $\alpha$ and $\beta$ are as displayed below 
\begin{center}
\begin{tikzpicture}
 \draw[<->] (8,0)--(10,0) node[right = 0.1mm] {$\alpha$};
 \draw[<->] (8.5,0.866)--(9.5,-0.866);
 \draw[<->] (8.5,-0.866)--(9.5,0.866);
 \draw[<->] (8.5,0.288675) node[left = 0.1mm ]{$\beta$}--(9.5,-0.288675);
 \draw[<->] (8.5,-0.288675)--(9.5,0.288675);
 \draw[<->] (9,-0.57735)--(9,0.57735);
\end{tikzpicture}
\end{center}
Then the other ten roots are 
$\alpha+\beta, \alpha+2\beta, \alpha+3\beta, 2 \alpha+3\beta,$
and the negatives of the first six.

Bundling together five of these groups, we obtain another important 
group, which we denote $N$ consisting of all matrices of the form 
$$
\begin{aligned}
n(x,y,z,u,v)&= 
x_{\alpha+3\beta}(x) x_{\alpha+2\beta}(y) x_{2\alpha+3\beta}(z) x_{\alpha + \beta}(u) x_\alpha(v)\\&=
\begin{pmatrix}
    1 & 0 & -u & -y & -y & -x & v x + 2 \, u y - z & -u x - y^{2} \\
0 & 1 & v & u & u & -y & -u^{2} + v y & -u y + z \\
0 & 0 & 1 & 0 & 0 & 0 & y & x \\
0 & 0 & 0 & 1 & 0 & 0 & -u & y \\
0 & 0 & 0 & 0 & 1 & 0 & -u & y \\
0 & 0 & 0 & 0 & 0 & 1 & -v & u \\
0 & 0 & 0 & 0 & 0 & 0 & 1 & 0 \\
0 & 0 & 0 & 0 & 0 & 0 & 0 & 1
\end{pmatrix}
\end{aligned}
$$
Note that the function $n$ is not a homomorphism, because the group $N$ is not abelian. However it does give a bijection 
$R^5 \to N(R)$ for any $\Q$-algebra $R.$ If $R = \R, \Q_p$
or $\A,$ then this bijection with $R^5$ defines a 
measure on $N(R)$, which corresponds to product measure on 
$R^5.$ 

Abusing notation, we shall use the symbol $n$ for a general 
element of $N$ as well as for the above function 
from $R^5 \to N(R)$ for each $R.$ So, for example,
``$n \in N(\A)$'' means the same things as  ``$n=n(x,y,z,u,v)$ for some 
$x,y,z,u,v \in \A.$''

The function $n$ also has a lower-triangular analogue,
which will also arise in the results of Jiang-Rallis and Pleso. 
It is given by 
$$
\begin{aligned}
n^-(x,y,z,u,v) &= 
x_{-\alpha}(x) x_{\alpha-\beta}(y) x_{-2\alpha-3\beta}(z) 
x_{-\alpha-2\beta}(u) x_{-\alpha-3\beta}(v)
\\&=
\begin{pmatrix}
1 & 0 & 0 & 0 & 0 & 0 & 0 & 0 \\
0 & 1 & 0 & 0 & 0 & 0 & 0 & 0 \\
-y & x & 1 & 0 & 0 & 0 & 0 & 0 \\
-u & y & 0 & 1 & 0 & 0 & 0 & 0 \\
-u & y & 0 & 0 & 1 & 0 & 0 & 0 \\
-v & -u & 0 & 0 & 0 & 1 & 0 & 0 \\
v x + 2 \, u y - z & u x - y^{2} & u & -y & -y & -x & 1 & 0 \\
-u^{2} - v y & -u y + z & v & u & u & y & 0 & 1
\end{pmatrix},    
\end{aligned}
$$
and its image is a subgroup of $G_2$ which is conjugate to $N$ and denoted $N^-.$ As we did with ``$n$'' we will also 
abuse notation with $n^-$ by denoting a general element of 
the group $N^-(R)$ by $n^-$ when we have no need to 
refer to its individual coordinates.

The next subgroup of $G_2$ is isomorphic to $GL_2.$ Specifically, 
if $g = \left(\begin{smallmatrix} a&b\\c&d \end{smallmatrix} \right),$ then we define
$$
m(g)= \begin{pmatrix}
    a&b&&&&&&\\ c&d&&&&&&\\ 
    && a^2/\Delta&ab/\Delta&ab/\Delta&-b^2/\Delta&&\\
    && ad/\Delta&ad/\Delta&bc/\Delta&-bd/\Delta&&\\
    && ad/\Delta&bc/\Delta&ad/\Delta&-bd/\Delta&&\\
    && -d^2/\Delta&-cd/\Delta&-cd/\Delta&d^2/\Delta&&\\
    &&&&&& a/\Delta&-b/\Delta\\
    &&&&&& -c/\Delta&d/\Delta 
\end{pmatrix},
$$
where $\Delta = ad-bc.$
Then $m:GL_2 \to GL_8$ is a 
homomorphism and its image is 
a subgroup of $G_2$ which we denote $M.$

The group $M$ normalizes the group $N,$ and the product of the groups $N$ and $M$ is denoted by $P.$
For any $\Q$-algebra $R$, every element of 
$P(R)$ can be written uniquely as 
$m(g)n(x,y,z,u,v)$ for some $g \in GL_2(R)$ and 
$x,y,z,u,v\in R.$ Since $M$ normalizes $U,$ the 
mapping $m(g)n(x,y,z,u,v) \to g$ is a homomorphism.

For any prime $p$ we have $G_2(\Q_p) = P(\Q_p) \cdot G_2(\Z_p),$
that is, every element of $G_2(\Q_p)$ can be expressed 
as $b k$ with $b \in P(\Q_p)$ and $k \in G_2(\Z_p).$
References for this 
fact include \cite{Iwahori-Matsumoto} Proposition 2.33.
The  expression is not unique, because $P(\Q_p) \cap G_2(\Z_p) 
= P(\Z_p) $ which is not empty. But if $b_1k_1=b_2k_2,$ then 
there exists $\beta \in P(\Z_p)$ such that $b_2 = b_1 \beta$
and $k_2 = \beta^{-1} k_1.$

\subsection{Characters of \texorpdfstring{$N$}{N} and their connection with cubic forms}

We shall be working with continuous homomorphisms from $N(\A)$ and $N(\Q_p)$ 
into the group $S^1= \{ z \in \C: |z|=1\}.$ Such homomorphisms are 
sometimes called ``characters.''

Let $\nu(n(x,y,z,u,v))$ be the row
vector $\bbm x&y&u&v\ebm.$
It's not difficult to 
check that $\nu$ 
is a homomorphism $N(R) \to R^4$ for any $\Q$-algebra $R$. Its kernel is 
the group $Z(R)$ of all matrices of the form $n(0,0,z,0,0).$
It's also not hard to check that 
$$n(0,0,0,0,z)n(1,0,0,0,0)n(0,0,0,0,z)^{-1}n(1,0,0,0,0)^{-1}
= n(0,0,z,0,0).$$ This implies that, for any $\Q$-algebra $R$, 
and any abelian group $A,$ every homomorphism from $N(R)$ to $A$ 
must factor through $\nu.$

Next, we need some key facts about the space of all 
continuous homomorphisms $R \to S^1$ when $R = \R, \Q_p$ 
or $\A.$ See, for example \cite{CassFroh},\cite{Lang}.
Let $\psi$ be any fixed nontrivial continuous 
homomorphism $\A^\times \to S^1,$ which is trivial on $\Q^\times.$
(For example, the function $e$ defined in 
\eqref{eq:e(x)}.)
Then
every other continuous 
homomorphism $\Q^\times \backslash \A^\times \to S^1,$ is of the form $\psi_a(x) = \psi(ax)$
for some $a \in \Q.$ 
By restricting $\psi$ to the copy of $\R^\times$ inside 
$\A^\times$ we get a nontrivial continuous homomorphism $\psi_{\infty}:\R^\times \to S^1,$ and 
every other continuous 
homomorphism $\R^\times \to S^1,$ is of the form $\psi_{\infty, a}(x) = \psi_\infty(ax)$
for some $a \in \R.$ 
Similarly, by restricting $\psi$ to the copy of $\Q_p^\times$ inside 
$\A^\times$ we get a nontrivial continuous homomorphism $\psi_{p}:\Q_p^\times \to S^1,$ and 
every other continuous 
homomorphism $\Q_p^\times \to S^1,$ is of the form $\psi_{p, a}(x) = \psi_p(ax)$
for some $a \in \Q_p.$ 

It follows that every continuous homomorphism $ N(\A) \to S^1$ 
which is trivial on $N(\Q)$
has
the form $n(x,y,z,u,v)\mapsto \psi( a_1x+a_2y+a_3u+a_4v)$ for some 
$a_1, a_2, a_3, a_4 \in \Q,$ or, equivalently, 
$n \mapsto \psi( \nu(n) \cdot \sigma)$
for some  $\sigma\in \Q^4.$
Here $\cdot$ is the usual dot product.
Moreover,
every continuous homomorphism $N(\Q_p) \to S^1$
has the same form, with coefficients in $\Q_p.$
In either case, we let $\psi_{\sigma}$ be the mapping 
attached to the quadruple $\sigma$ as above.

We have 
\begin{equation}\label{eq:nu(m inverse n m)}  
\nu (m(g^{-1}) n(x,y,z,u,v) m(g)  ) = \bbm x&y&u&v\ebm \cdot \rho(g), 
\end{equation}
where 
$$
\rho\left( \begin{pmatrix}
    1&a\\ 0&1
\end{pmatrix}\right) = 
\begin{pmatrix}
1&0&0&0\\-3a&1&0&0\\-3a^2&2a&1&0\\-a^3&a^2&a&1
\end{pmatrix},
$$
$$
\rho\left( \begin{pmatrix}
    1&0\\a&1
\end{pmatrix}\right) = 
\begin{pmatrix}
      1&-a&-a^2&-a^3\\ 
    0&1&2a&3a^2\\
    0&0&1&3a\\
    0&0&0&1  
\end{pmatrix},
$$
$$
\rho\left( \begin{pmatrix}
    t_1&0 \\ 0 & t_1
\end{pmatrix}\right) = 
\begin{pmatrix}
    t_2/t_1^2 &0&0&0\\0&t_1^{-1} &0&0 \\ 0&0&t_2^{-1}&0 \\ 0&0&0&t_1/t_2^2
\end{pmatrix}.
$$
From \eqref{eq:nu(m inverse n m)}, it follows that 
for $\sigma \in \Q^4$ and $g \in GL_2(\Q),$
$$\psi_\sigma(m(g)^{-1} n m(g) ) = \psi_{ \sigma\cdot \rho(g)^t}(n).$$

The mapping $\rho: GL_2 \to GL_4$ is closely related to a certain 
action on homogeneous polynomials of degree at most $3$ which we now explain. Key results are from \cite{Wright}.
If $\underline{c}=(c_1, c_2, c_3, c_4),$ 
 then we define 
$$F_{\underline{c}}([x,y])= c_1x^3+c_2x^2y+c_3xy^2+c_4y^3.$$ 
For $g \in GL_2$ we define 
$$ g \cdot F([x,y]) = (\det g)^{-1}F([x,y] g).$$
Then we define $\varrho(g)$ to be the matrix such that 
$$g\cdot F_{\underline{c}} = F_{\underline{c}\cdot \varrho(g)^t}.$$
It's easy to check that
$$\begin{aligned}
\varrho\left( 
\bbm t_1& 0\\ 0 & t_2\ebm 
\right)
&= \bbm t_1^2/t_2&&&\\&t_1&&\\&&t_2&\\ &&&t_2^2/t_1 \ebm\\
\varrho\left( 
\bbm 1& a\\ 0 & 1\ebm 
\right)
&= \bbm 1&a&a^2&a^3\\&1&2a&3a^2\\&&1&3a\\ &&&1 \ebm\\
\varrho\left( 
\bbm 1& 0\\ a & 1\ebm 
\right)
&= \bbm 1&&&\\3a&1&&\\3a^2&2a&1&\\ a^3&a^2&a&1 \ebm
\end{aligned}
$$

Then 
\begin{equation}
\label{rhoAndvarrho}
\rho(g) = 
w_1
\varrho(\det g^{-1} g)
w_1^{-1},
\qquad \text{ where } w_1 = \begin{pmatrix}  
0&0&0&-1\\  0&0&1&0\\ 0&1&0&0\\1&0&0&0 
\end{pmatrix}.
\end{equation}
Indeed, it suffices to check this identity on matrices of the form 
$\bpm 1&a\\ 0&1 \epm,$
$\bpm 1&0\\ a&1 \epm,$ and 
$\bpm t_1&0\\ 0&t_2 \epm,$
because matrices of these three forms generate $GL_2.$ 

\subsection{Some results of Wright}

Let $k$ be an arbitrary field. Orbits for the action of $GL_2(k)$ on $k^4$ by $\varrho^t$
were classified in \cite{Wright}.
\begin{thm}[\cite{Wright}] 
    The orbits for the action of $GL_2(k)$ on $k^4$ by $\varrho^t$
    are as follows: 
    \begin{enumerate}
        \item The set which contains only the zero vector is a singleton orbit.
        \item The set which consists of all $\underline{c}$ 
        such that $F_{\underline{c}}(x,y) = d(ax+by)^3$ for 
        some (non-unique) $a,b,d \in k$ is a single orbit.
        \item The set which consists of all $\underline{c}$ 
        such that $F_{\underline{c}}(x,y) = (a_1x+b_2y)^2(a_2x+b_2y)$ for 
        some (non-unique) $a_1,a_2,b_1,b_2 \in k$ is a single orbit.
        \item For all other $\underline{c},$ let $K_{\underline{c}}$ be the smallest field extension of 
        $k$ such that $F_{\underline{c}}$ splits into linear 
        factors over $K_{\underline{c}}.$ Then $\underline{c}$
        and $\underline{d} \in k^4$ lie in the same $GL_2(k)$-orbit if and only if $K_{\underline{c}}=K_{\underline{d}}.$
    \end{enumerate}
\end{thm}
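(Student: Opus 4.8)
The plan is to translate the $\varrho^t$-action on $k^4$ into the action of $GL_2(k)$ on binary cubic forms and then pass to the three roots of such a form, where the orbit structure becomes transparent. Identify $\underline{c}\in k^4$ with the form $F_{\underline{c}}$; by the setup above, $g\in GL_2(k)$ acting by $\varrho(g)^t$ corresponds to $F_{\underline{c}}\mapsto (\det g)^{-1}F_{\underline{c}}([x,y]g)$, an action which factors through the action of $PGL_2(k)$ on the projective line $\mathbf{P}^1$ by M\"obius transformations. To a nonzero $\underline{c}$ attach $D_{\underline{c}}$, the multiset of the three roots of $F_{\underline{c}}$ in $\mathbf{P}^1(\overline{k})$ counted with multiplicity; since $F_{\underline{c}}$ has coefficients in $k$, this multiset is stable under $\mathrm{Gal}(\overline{k}/k)$. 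The first step is a reduction lemma: for nonzero $\underline{c},\underline{d}$, the quadruples lie in the same $GL_2(k)$-orbit if and only if $D_{\underline{c}}$ and $D_{\underline{d}}$ lie in the same $PGL_2(k)$-orbit. One direction is immediate. For the converse, pick $g_0\in GL_2(k)$ whose M\"obius transformation carries $D_{\underline{c}}$ to $D_{\underline{d}}$ (every element of $PGL_2(k)$ comes from $GL_2(k)$); then $g_0\cdot F_{\underline{c}}$ and $F_{\underline{d}}$ are cubic forms with the same roots and hence differ by a scalar $\lambda\in k^\times$, and $F_{\underline{d}}$ stays in the orbit because the scalar matrix $tI$ acts on a cubic form by $F\mapsto tF$ (here $(\det tI)^{-1}\cdot t^{3}=t$, so $F_{\underline d}=\lambda(g_0\cdot F_{\underline c})=\bigl((\lambda I)g_0\bigr)\cdot F_{\underline c}$).

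It then remains to classify $\mathrm{Gal}(\overline{k}/k)$-stable multisets of three points in $\mathbf{P}^1$ up to $PGL_2(k)$. Such a multiset is $3[P]$, or $2[P]+[Q]$ with $P\neq Q$, or $[P]+[Q]+[R]$ with the three points distinct; in the first two cases Galois must fix the point(s) of higher multiplicity, so all the points are $k$-rational. I would then invoke the classical fact that $PGL_2(k)$ is sharply $3$-transitive on $\mathbf{P}^1(k)$: this yields a single orbit of configurations $3[P]$ (case~2) and a single orbit of configurations $2[P]+[Q]$ with $P\neq Q$ (case~3; the two linear forms are of course taken non-proportional, else the cubic is a cube and lies in case~2). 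The complement of cases~1--3 is exactly the locus where $F_{\underline{c}}$ has three distinct roots over $\overline{k}$, which is case~4; there $K_{\underline{c}}$, the splitting field of $F_{\underline{c}}$, is the subfield of $\overline{k}$ generated by the three roots, and it is visibly unchanged by $PGL_2(k)$, so distinct splitting fields force distinct orbits.

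The heart of the proof is the converse in case~4: if $F_{\underline{c}}$ and $F_{\underline{d}}$ each have three distinct roots and $K:=K_{\underline{c}}=K_{\underline{d}}$, then their root configurations are $PGL_2(k)$-equivalent. I would argue by Galois descent over the finite extension $K/k$. Put $G:=\mathrm{Gal}(K/k)$. Since $K_{\underline{c}}=K_{\underline{d}}$, the kernels of the two Galois actions on the two triples of roots coincide, so both actions factor through the same quotient $G$, faithfully. As $G$ embeds in $S_3$ it is one of $1,\ \mathbf{Z}/2\mathbf{Z},\ \mathbf{Z}/3\mathbf{Z},\ S_3$, and in each case a faithful action on a three-element set is unique up to isomorphism (``one fixed point and one transposed pair'' for $\mathbf{Z}/2\mathbf{Z}$; the regular action for $\mathbf{Z}/3\mathbf{Z}$; the standard action for $S_3$), so there is a $G$-equivariant bijection $b$ from the roots of $F_{\underline{c}}$ to those of $F_{\underline{d}}$. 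Let $g\in PGL_2(K)$ be the unique M\"obius transformation realising $b$ on the three distinct points (sharp $3$-transitivity of $PGL_2$ over $K$). For each $\tau\in G$, the conjugate $\tau(g)$ realises the same bijection between the same points, because $b$ is $G$-equivariant; by uniqueness $\tau(g)=g$, so $g\in PGL_2(K)^{G}=PGL_2(k)$. This $g$ carries $D_{\underline{c}}$ to $D_{\underline{d}}$, and the reduction lemma finishes case~4. Assembling the pieces, the pair consisting of the multiplicity pattern of $D_{\underline{c}}$ and (in case~4) the splitting field $K_{\underline{c}}$ is a $GL_2(k)$-invariant separating the four families, and each family has been shown to be one orbit.

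The step I expect to be the main obstacle is this case-4 descent. Two points in it need care: that equality of splitting fields really forces the two Galois-actions-on-three-roots to be isomorphic $G$-sets (the small case analysis above, powered by the Galois correspondence identifying the two kernels), and the input $PGL_2(K)^{\mathrm{Gal}(K/k)}=PGL_2(k)$, i.e.\ Hilbert~90, which is also what makes the reduction lemma go through. By contrast the rational-root situations --- cases~2 and~3, and case~4 with $K_{\underline{c}}=k$ --- can alternatively be done by hand: move the roots to $[0:1],[1:0],[1:1]$, or to $[1:0]$ together with the roots of a normalised irreducible quadratic (two such quadratics with the same splitting field being matched by $\beta=a\alpha+b$ since $\alpha,\beta$ both generate that quadratic field over $k$), and then rescale. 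So the genuinely new content is confined to the two irreducible-cubic subcases, handled uniformly by the descent argument.
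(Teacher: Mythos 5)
The paper does not prove this theorem: it is stated as a citation to \cite{Wright} and no argument is supplied. So there is no in-paper proof to compare against, and your proof must be judged on its own. With that understood, your argument is essentially correct and complete, and it follows what is, as far as I know, the standard route: translate the $\varrho^t$-action into the twisted action $g\cdot F=(\det g)^{-1}F([x,y]g)$ on binary cubics, pass to the Galois-stable multiset of the three roots in $\mathbf{P}^1(\overline k)$, reduce (via the observation $(tI)\cdot F = tF$) the $GL_2(k)$-orbit problem to a $PGL_2(k)$-orbit problem on root configurations, dispose of the degenerate configurations by sharp $3$-transitivity over $k$, and handle three distinct roots by Galois descent. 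The descent step is correctly identified as the crux, and your treatment is right: equality of splitting fields identifies the two kernels inside $\mathrm{Gal}(\overline k/k)$, so both actions factor faithfully through $G=\mathrm{Gal}(K/k)$; for $G\hookrightarrow S_3$ a faithful $G$-action on a $3$-element set is unique up to isomorphism, giving a $G$-equivariant bijection of roots; the unique element of $PGL_2(K)$ realizing it is fixed by $G$ (uniqueness plus equivariance), hence lies in $PGL_2(K)^G$; and $PGL_2(K)^G=PGL_2(k)$ by Hilbert~90. This, together with the reduction lemma, finishes case~4.

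Two small caveats, neither of which undermines the application in the paper. First, your proof tacitly assumes the relevant cubics are separable over $k$, so that $K_{\underline{c}}/k$ is Galois and $\mathrm{Gal}$ makes sense. In case~4 the form has three distinct roots, hence is automatically separable, so this is fine there; but case~2 can fail in characteristic $3$ (e.g.\ $x^3-ty^3$ over $\mathbf{F}_3(t)$ has an inseparable triple root not of the advertised form $d(ax+by)^3$ with $a,b,d\in k$), and there your Galois machinery does not apply. Since the paper only ever invokes the theorem over $\mathbf{Q}_p$ with $p\equiv 5\pmod 6$ and the residue fields thereof, this is immaterial here, but if you want the result for arbitrary $k$ as stated you should either add a characteristic hypothesis (as Wright does in the source) or handle the inseparable triple-root case by a separate elementary argument. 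Second, in case~3 you should say explicitly (as you do parenthetically) that the two linear factors are required non-proportional; the statement in the paper has a typo (``$b_2$'' appears twice) that your reading correctly repairs.

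One stylistic remark: the paper's subsequent corollary recasts orbit-equivalence in terms of isomorphism of the cubic algebras $k[x]/(f_{\underline c})$, which is the Delone--Faddeev/Wright perspective, and the paper's proof of that corollary is elementary given the theorem. Your root-configuration-plus-descent proof and the cubic-algebra formulation are two faces of the same thing; your version has the advantage of making the role of Hilbert~90 transparent, while the algebra version generalizes more readily (e.g.\ over $\mathbf{Z}_p$, which is what the paper's subsequent proposition needs). It might be worth a sentence connecting the two, since the $G$-equivariant bijection of roots you construct is precisely a $k$-algebra isomorphism $k[x]/(f_{\underline c})\cong k[x]/(f_{\underline d})$ in disguise.
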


Let $f_{\underline{c}}(x) = F_{\underline{c}}(1,x).$ We give a 
modest reformulation of the result. 
\begin{cor} Assume that $k$ has more than two elements.
    Each nonzero orbit for the action of $GL_2(k)$ on $k^4$ by 
    $\varrho$ contains an element such that $f_{\underline{c}}$
    has degree $3.$ To such an element $\underline{c}$ we can attach the quotient ring $k[x]/(f_{\underline{c}}).$ Then 
    two elements of $k^4$ are in the same orbit if and only 
    if the attached quotient rings are isomorphic.
\end{cor}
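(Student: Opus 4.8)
The plan is to deduce the corollary from the theorem of Wright stated just above, essentially by chasing the dictionary between the $\varrho$-action and the $\varrho^t$-action and translating the field-extension invariant into the ring-isomorphism invariant. First I would address the preliminary reduction: given a nonzero $\underline{c}$, I need an element of its $GL_2(k)$-orbit (under $\varrho$) for which $f_{\underline{c}}(x) = F_{\underline{c}}(1,x)$ has degree exactly $3$, i.e. the leading coefficient $c_1$ is nonzero. Since $F_{\underline{c}}$ is a nonzero binary cubic form, it is nonzero at some point $[x_0,y_0] \in k^2$, and since $|k| > 2$ there are enough points that we can choose one with $x_0 \neq 0$; then conjugating by a suitable lower-triangular (or general) element of $GL_2$ that sends $[1,0]$ to a scalar multiple of $[x_0,y_0]$ moves $\underline{c}$ to a representative with nonzero leading coefficient. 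I should double-check that the hypothesis $|k|>2$ is exactly what is needed here (over $\mathbf{F}_2$ a form like $x^2y + xy^2$ vanishes at both $[1,0]$ and $[1,1]$ and at $[0,1]$, illustrating why the hypothesis cannot be dropped).

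Next I would set up the translation between orbits under $\varrho^t$ and orbits under $\varrho$. Since $\varrho^t(g) = (\varrho(g))^t$ and the transpose-inverse map is an anti-automorphism of $GL_2$, the $\varrho$-orbits are the images of the $\varrho^t$-orbits under $\underline{c} \mapsto \underline{c}$ composed with $g \mapsto (g^t)^{-1}$ or $g \mapsto g^{-1}$; in any case the orbit structure is the same, and Wright's classification transfers verbatim: for $\underline{c}$ not of the three degenerate types, $\underline{c}$ and $\underline{d}$ lie in the same $\varrho$-orbit iff the splitting fields $K_{\underline{c}}$ and $K_{\underline{d}}$ coincide (as subfields of a fixed algebraic closure — one must be slightly careful that "coincide" here means $k$-isomorphic, which is automatic for subfields of a fixed $\bar k$). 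I would state this as a lemma so the rest is clean.

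The core step is then the identification of the invariant $K_{\underline{c}}$ with the isomorphism type of the ring $k[x]/(f_{\underline{c}})$, for a representative with $\deg f_{\underline{c}} = 3$. Here I split into cases by factorization type of $f_{\underline{c}}$ over $k$: (i) $f_{\underline{c}}$ irreducible — then $k[x]/(f_{\underline{c}})$ is the cubic field extension, which equals $K_{\underline{c}}$ (or rather its Galois closure determines and is determined by it, but in the non-degenerate cubic case the relevant statement is that $k[x]/(f)$ as a ring determines and is determined by the splitting field); (ii) $f_{\underline{c}}$ splits as linear times irreducible quadratic — then $k[x]/(f_{\underline{c}}) \cong k \times (\text{quadratic field})$, and $K_{\underline{c}}$ is that quadratic field; (iii) $f_{\underline{c}}$ splits into three distinct linear factors — then $k[x]/(f_{\underline{c}}) \cong k \times k \times k$ and $K_{\underline{c}} = k$. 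The degenerate orbits (triple root, double root) are exactly the ones excluded, so "nonzero orbit" in the corollary should be read as "the orbit of a $\underline{c}$ not of those two types" — I would make sure the phrasing matches, or note that for those orbits one can also take a degree-3 representative but the attached ring has nilpotents and the classification by the ring still works, recovering cases (2) and (3) of Wright as well. In each case the passage "$K_{\underline{c}} = K_{\underline{d}} \iff k[x]/(f_{\underline{c}}) \cong k[x]/(f_{\underline{d}})$" is immediate once one knows that a cubic étale (or not) $k$-algebra is determined up to isomorphism by, and determines, the multiset of its field factors, hence their compositum/splitting data.

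The main obstacle I anticipate is purely bookkeeping rather than conceptual: matching Wright's case division (stated in terms of shapes of the form $F_{\underline{c}}$) against the factorization-of-$f_{\underline{c}}$ case division, while simultaneously tracking the difference between $\varrho$ and $\varrho^t$ and the difference between "splitting field $K_{\underline{c}}$" and "ring $k[x]/(f_{\underline{c}})$" in the one genuinely subtle sub-case, namely a cubic with Galois group $\mathbf{Z}/3$ versus $S_3$ (both irreducible, so both give a cubic field $k[x]/(f)$, and one must confirm that this cubic field — not its Galois closure — is the right invariant, i.e. that two such fields are $k$-isomorphic iff they have the same splitting field; this is true because the splitting field of a cubic determines the cubic subfield up to conjugacy and a cubic field has a unique conjugacy class of embeddings into its Galois closure up to the obvious action). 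Once that point is pinned down, the rest is a short case check, and I would keep it short, citing Wright's theorem for everything about orbits and only doing the elementary algebra of cubic $k$-algebras by hand.
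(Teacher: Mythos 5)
Your proposal follows essentially the same route as the paper: use $|k|>2$ to arrange a degree-$3$ representative in each nonzero orbit, then compute $k[x]/(f_{\underline{c}})$ case-by-case (CRT when $f_{\underline{c}}$ is square-free, nilpotents when it has a repeated root) and match the result against Wright's invariant $K_{\underline{c}}$. Two small corrections to keep the bookkeeping straight: the leading coefficient of $f_{\underline{c}}(x)=F_{\underline{c}}(1,x)=c_1+c_2x+c_3x^2+c_4x^3$ is $c_4$, not $c_1$, so the degree-$3$ condition is $c_4\neq 0$, i.e.\ $F_{\underline{c}}(0,1)\neq 0$, i.e.\ $x\nmid F_{\underline{c}}$ (this is exactly the reformulation the paper uses); and the statement's ``each nonzero orbit'' does include the two degenerate (repeated-root) orbits, which the paper handles via $k\oplus k[x]/(x^2)$ and $k[x]/(x^3)$ rather than excluding them as your first reading suggests. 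Your explicit treatment of the irreducible-cubic subtlety --- that the cubic field $k[x]/(f_{\underline{c}})$ determines, and is determined by, the splitting field $K_{\underline{c}}$ whether the Galois group is cyclic or $S_3$ --- is a point the paper compresses into the terse remark ``no two of the rings thus obtained are isomorphic to each other,'' so the extra care you propose there is warranted.
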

\begin{proof}
    Indeed, $f_{\underline{c}}$ has degree less than $3$ if and only if $x$ is a linear factor of $F_{\underline{c}}.$ By 
    Wright's result, the orbit of $\underline{c}$ determines the 
    number and multiplicities of the linear factors of $F_{\underline{c}},$ but not what they are. 
    So its possible to choose an element such that none of the 
    linear factors is $x,$ with only one exception-- if $k$ has two elements and $F_{\underline{c}}$ has three distinct linear factors, then one of them must be $x$ because $k[x,y]$ only has three linear elements.
    
    If $f_{\underline{c}}$ has distinct irreducible factors
    then, by the Chinese remainder theorem, $k[x]/(f_{\underline{c}}(x))$ is the direct sum of the quotients of $k[x]$ by the individual factors-- that is, it's a single cubic field if $f_{\underline{c}}$ is irreducible, or $k \oplus K_{\underline{c}}$ if $f_{\underline{c}}$ is the
    product of a linear factor and a quadratic factor, and it's 
    $k \oplus k \oplus k$ if $f_{\underline{c}}$ is the
    product of three distinct linear factors. Finally, if 
    $f_{\underline{c}}$ has a repeated root $r$ then, substituting $x \to x-r$ (which is an automorphism of $k[x]$), we can assume $r=0.$ Then $k[x] \cong k\oplus k[x]/(x^2)$
    if $0$ is  a double root, or $k[x]/(x^3)$ if it's a triple root. 
    No two of the rings thus obtained are isomorphic to each other.
\end{proof}
\begin{rmks}
\begin{enumerate}
    \item If we had defined $f_{\underline{c}}(x)$ as $F_{\underline{c}}(x,1)$ we would have gotten the same 
    quotient ring for each orbit. Indeed, each orbit has an 
    element which is divisible by neither $x$ nor $y.$ For such 
    an element $F_{\underline{c}}(x,1)$ and $F_{\underline{c}}(1,x)$ are both cubic, and their roots are the inverses of 
    one another.
\item    If $k[x]/(f_{\underline{c}}(x))$ is a cubic field, there are two possibilities for its relationship with the field $K_{\underline{c}}$ introduced earlier. Let $\alpha$ be any 
    root of $f_{\underline{c}}(x).$ Then $k[x]/(f_{\underline{c}}(x))\cong k(\alpha).$ If $k(\alpha)$ contains the other two roots of $f_{\underline{c}}(x),$ then $K_{\underline{c}} = k(\alpha) \cong k[x]/(f_{\underline{c}}(x)).$ If not, then the two other roots of $f_{\underline{c}}$ lie in a quadratic 
    extension of $k(\alpha),$ and this extension is $K_{\underline{c}}.$ It is, of course, degree $6$ over $k.$
\end{enumerate}
\end{rmks}
This also enables us to compute the orbits for the 
action of $GL_2(k)$ on $k^4$ via $\rho.$
Because of equation \eqref{rhoAndvarrho}, which 
relates the actions by $\rho$ and 
by $\varrho,$ it is convenient to 
make the following definition.  
    For $\sigma=(a_1, a_2, a_3, a_4)
    \in k^4$ let
    $$g_\sigma = f_{\sigma \cdot w_1} =f_{(a_4, a_3, a_2, -a_1)} \in k[x].$$
\begin{cor}
Let $k$ be any field.
    Take $\sigma$ and $\tau \in k^4.$  
    Then there exists $g \in GL_2(k)$ such that 
    $\tau= \sigma \rho(g)^t$ if and only if 
    $$k[x]/(g_{\sigma})\cong  k[x]/(g_{\tau}).$$
\end{cor}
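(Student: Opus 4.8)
The plan is to transport the statement through the conjugation recorded in \eqref{rhoAndvarrho}, reducing it to the preceding corollary about the action of $GL_2(k)$ on $k^4$ by $\varrho^{t}$. Two elementary observations set this up. First, a direct computation shows $w_1^{-1}=w_1^{t}$ (the matrix $w_1$ permutes the standard basis of $k^4$ up to a single sign), and hence $(w_1^{-1})^{t}=w_1$ as well. Second, writing $T(g)=(\det g)^{-1}g$, one has $\det T(g)=(\det g)^{-1}$, so $T(T(g))=g$; thus $T$ is an involution of $GL_2(k)$, in particular a bijection. Combining these with \eqref{rhoAndvarrho},
$$\rho(g)^{t}=\bigl(w_1\,\varrho(T(g))\,w_1^{-1}\bigr)^{t}=(w_1^{-1})^{t}\,\varrho(T(g))^{t}\,w_1^{t}=w_1\,\varrho(T(g))^{t}\,w_1^{-1},$$
so $\bigl(\sigma\,\rho(g)^{t}\bigr)w_1=(\sigma w_1)\,\varrho(T(g))^{t}$. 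Since $T$ is surjective, $h=T(g)$ ranges over all of $GL_2(k)$ as $g$ does, so the linear automorphism $\sigma\mapsto\sigma w_1$ of $k^4$ maps orbits of the action $\sigma\mapsto\sigma\rho(g)^{t}$ bijectively onto orbits of the action $\sigma\mapsto\sigma\varrho(g)^{t}$. Hence $\tau=\sigma\rho(g)^{t}$ for some $g\in GL_2(k)$ if and only if $\sigma w_1$ and $\tau w_1$ lie in a common $\varrho^{t}$-orbit.

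It remains to identify the invariant. Since $g_\sigma=f_{\sigma w_1}$ and $g_\tau=f_{\tau w_1}$ by definition, the preceding corollary — available because $|k|>2$, as for the paper's case $k=\Z/p\Z$ with $p\geq5$ — translates ``$\sigma w_1$ and $\tau w_1$ in the same $\varrho^{t}$-orbit'' into ``$k[x]/(g_\sigma)\cong k[x]/(g_\tau)$.'' Two small points enter: the zero orbit $\sigma=0$ is exactly the one with $g_\sigma=0$, hence $k[x]/(g_\sigma)\cong k[x]$, which is not isomorphic to the finite-dimensional ring $k[x]/(g_\tau)$ for any $\tau\neq0$; and, as in that corollary, one should use a representative of each nonzero orbit whose attached cubic has degree $3$, such a representative existing in every nonzero $\rho^{t}$-orbit by the transport above. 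The ring $k[x]/(g_\sigma)$ is an orbit invariant only for those representatives — its isomorphism class can change within an orbit in general — so the cleanest formulation of the conclusion compares the rings of the degree-$3$ representatives; this is the one place a little care is required.

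With \eqref{rhoAndvarrho} already verified in the excerpt, the argument is short and I expect no real obstacle. The things to get exactly right are the placement of the transposes and inverses of $w_1$ in the displayed identity — which rests entirely on $w_1^{-1}=w_1^{t}$ — and the bookkeeping that the quotient-ring invariant of the preceding corollary is attached to orbits via their degree-$3$ representatives rather than to arbitrary vectors of $k^4$. The case $|k|=2$, excluded there, does not occur in this paper but would need a separate finite check.
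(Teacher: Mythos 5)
Your conjugation argument is exactly the mechanism the paper has in mind: the corollary is placed directly after display \eqref{rhoAndvarrho} and the definition $g_\sigma = f_{\sigma w_1}$ so that it follows from the preceding corollary by transport of structure, and the computations you make to carry it out ($w_1^{-1} = w_1^t$, that $g \mapsto (\det g)^{-1} g$ is an involution of $GL_2(k)$, hence $\rho(g)^t = w_1 \varrho(T(g))^t w_1^{-1}$, hence $\sigma \mapsto \sigma w_1$ carries $\rho^t$-orbits bijectively onto $\varrho^t$-orbits) are all correct.

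The two caveats you flag are genuine, and you are right not to paper over them. The preceding corollary assumes $|k| > 2$ (needed so that the ``three distinct linear factors'' orbit has a degree-$3$ representative; over $\F_2$ the only split cubic form is $xy(x+y)$, whose attached $f$ has degree $2$), while the present statement says ``any field'' --- that is a real mismatch inherited from the paper's wording. More substantively, $k[x]/(g_\sigma)$ depends on the vector $\sigma$ and not only on its orbit, so the ``only if'' direction of the corollary, read literally, fails. Concretely, $\sigma = (1,0,0,0)$ and $\tau = (0,0,0,1)$ lie in the same $\rho^t$-orbit (their images $\sigma w_1 = (0,0,0,-1)$ and $\tau w_1 = (1,0,0,0)$ give the perfect-cube forms $-y^3$ and $x^3$, which are a single $\varrho^t$-orbit by Wright's theorem), yet $g_\sigma = -x^3$ gives $k[x]/(x^3)$ while $g_\tau = 1$ gives the zero ring, which are not isomorphic. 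The honest formulation, as you observe, attaches the quotient ring to the orbit through a degree-$3$ representative rather than to an arbitrary vector. In the paper's later computations this distinction never bites because $\sigma = (1,0,b,c)$ always yields the genuine cubic $g_\sigma = -u^3 + bu + c$; but the corollary as stated ought either to carry the hypothesis $a_1 \ne 0$ (so that $\deg g_\sigma = 3$) or to be phrased as a statement about orbit invariants, and should inherit the hypothesis $|k|>2$ from its predecessor.
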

In what follows, 
it will also be useful 
to have a partial description of the orbits of
$GL_2(\Z_p)$ acting on $\Q_p^4$ via $\varrho.$
Before proceeding with this it will be useful to 
introduce a key fact from \cite{Wright}.
For $k$ any field and 
$\underline{c} = (c_1, c_2, c_3, c_4) \in k^4$
let 
$$P(\underline{c})=
c_2^2c_3^2+18c_1c_2c_3c_4-4c_2^3c_4-4c_1c_3^3-27c_1^2c_4^2.
$$
\begin{lem}\label{P(c.g)=det^2gP(c)}
For all fields $k,$ all $\uline{c}\in k^4$ and all $g \in GL(2, k),$ we have $P(\uline{c} \cdot \varrho(g)^t) = 
\det g^2 P(\uline{c}).$    
\end{lem}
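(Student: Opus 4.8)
The plan is to recognize $P(\underline{c})$ as (a scalar multiple of) the classical discriminant of the binary cubic form $F_{\underline{c}}([x,y]) = c_1 x^3 + c_2 x^2 y + c_3 x y^2 + c_4 y^3$, and then to transport the well-known transformation law of the discriminant through the definition of $\varrho$. Concretely, recall from the text that $\varrho(g)$ is defined by $g\cdot F_{\underline{c}} = F_{\underline{c}\cdot\varrho(g)^t}$, where the action is $g\cdot F([x,y]) = (\det g)^{-1} F([x,y]g)$. So $F_{\underline{c}\cdot\varrho(g)^t}([x,y]) = (\det g)^{-1} F_{\underline{c}}([x,y]g)$. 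I would first verify, by a direct (but not belabored) computation on a generic cubic, that $P(\underline{c}) = \operatorname{disc}(F_{\underline{c}})$ up to a fixed nonzero constant; this is the standard discriminant formula for a binary cubic and the coefficients $c_2^2c_3^2 + 18 c_1c_2c_3c_4 - 4c_2^3 c_4 - 4 c_1 c_3^3 - 27 c_1^2 c_4^2$ are exactly the familiar expression.

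Next I would establish the transformation law for the discriminant of a binary cubic under the substitution $[x,y]\mapsto [x,y]h$ for $h\in GL_2$: namely $\operatorname{disc}(F([x,y]h)) = \det(h)^{\,6}\operatorname{disc}(F([x,y]))$. The cleanest way to see this is to work over an algebraic closure, write $F([x,y]) = c_1\prod_{i=1}^{3}(x - r_i y)$ (assuming $c_1\neq 0$; the general case follows by density/continuity of polynomial identities, or by a symmetric argument), note that the substitution permutes and rescales the linear factors in a controlled way, and track how the leading coefficient and the pairwise root-differences scale. The exponent $6$ arises because the discriminant of a cubic is a form of degree $4$ in the coefficients and weight $6$ in the roots; equivalently one can cite that $\operatorname{disc}$ is a relative $SL_2$-invariant of weight $6$. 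Combining this with $F_{\underline{c}\cdot\varrho(g)^t} = (\det g)^{-1} F_{\underline{c}}([x,y]g)$ gives
\[
P(\underline{c}\cdot\varrho(g)^t) = \operatorname{disc}\big((\det g)^{-1} F_{\underline{c}}([x,y]g)\big) = (\det g)^{-4}\cdot(\det g)^{6}\cdot\operatorname{disc}(F_{\underline{c}}) = (\det g)^2 P(\underline{c}),
\]
where the factor $(\det g)^{-4}$ comes from scaling all four coefficients of the cubic by $(\det g)^{-1}$ and using that $\operatorname{disc}$ is homogeneous of degree $4$ in the coefficients.

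Alternatively, and perhaps more in the spirit of the paper's elementary computations, I could skip the discriminant interpretation entirely and verify the identity directly on generators of $GL_2(k)$: the upper unipotent $\left(\begin{smallmatrix}1&a\\0&1\end{smallmatrix}\right)$, the lower unipotent $\left(\begin{smallmatrix}1&0\\a&1\end{smallmatrix}\right)$, and the diagonal $\left(\begin{smallmatrix}t_1&0\\0&t_2\end{smallmatrix}\right)$, whose $\varrho$-matrices are all displayed explicitly in the excerpt. For the diagonal torus the check is a one-line monomial bookkeeping ($\det g = t_1 t_2$, and each monomial in $P$ has total weight matching $t_1^2 t_2^2$ after substitution). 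For each unipotent generator one substitutes the explicit triangular $\varrho(g)$ into the quartic $P$ and confirms the $a$-dependence cancels; since $P$ is a polynomial identity in $a$ it suffices to check finitely many specializations, or to observe that $P$ composed with a unipotent is again a polynomial in $a$ whose value at $a=0$ is $P(\underline{c})$ and whose derivative in $a$ must vanish by a short symmetry argument. Because any $g\in GL_2(k)$ is a product of such generators and both sides are multiplicative in $g$ (the left side because $\varrho$ is a homomorphism, the right because $\det$ is), the identity for generators implies it in general.

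The main obstacle I anticipate is purely bookkeeping: the direct-substitution route requires expanding a degree-$4$ polynomial in four variables after a linear change of variables, which is error-prone though entirely mechanical, while the discriminant route requires one to nail down the exact normalizing constant relating $P$ to $\operatorname{disc}$ and to be careful about the degenerate case $c_1 = 0$ when factoring over the closure. I would favor the generator-based verification, doing the torus case in full and then remarking that the two unipotent cases are handled by the same elementary expansion, since this keeps the argument self-contained and matches the elementary flavor of the surrounding material; the field-independence is automatic because every step is a polynomial identity with integer coefficients.
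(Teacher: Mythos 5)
The paper does not actually prove this lemma; it simply cites \cite{Wright} (``This appears near the top of p.\ 512 of \cite{Wright}.''). Your proposal therefore goes genuinely further than the source by supplying a self-contained argument, and both of the routes you sketch are sound. The discriminant route is correct: you rightly identify $P$ with the binary-cubic discriminant (the paper itself records $P(\uline{c}) = c_4^4\prod_{i<j}(\alpha_i-\alpha_j)^2$ a few lines later), the weight-$6$ covariance of the discriminant of a degree-$3$ binary form under $[x,y]\mapsto[x,y]h$ is standard, and the bookkeeping $(\det g)^{-4}\cdot(\det g)^6 = (\det g)^2$ coming from degree-$4$ homogeneity in the coefficients is exactly right. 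One small caution is that the ``factor over the algebraic closure and track roots'' step implicitly needs care in small characteristic or when the leading coefficient vanishes, but as you note this is washed away because the asserted equality is a polynomial identity in $c_1,\dots,c_4$ and the entries of $g$ with integer coefficients, so it suffices to verify it over $\Q$ with $c_1$ generic. Your second, generator-based route is if anything closer in spirit to how the paper verifies the companion identity \eqref{rhoAndvarrho} on the three types of generators, and it avoids the discriminant theory entirely: the diagonal-torus check is a one-line weight count, and each unipotent case is a finite polynomial expansion; combined with the multiplicativity of both sides in $g$ (using that $\varrho$ is a homomorphism and $\det$ is multiplicative), this gives the full statement. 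In short, your proposal is a correct proof by a different (and more explicit) route than the paper, which opts to cite Wright rather than reprove the transformation law.
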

\begin{proof}
    This appears near the top of p. 512 of \cite{Wright}.
\end{proof}
If $f_{\underline{c}}$ is a cubic polynomial, with roots
(possibly lying in an extension field)
$\alpha_1, \alpha_2$ and $\alpha_3,$ then 
$P(\uline{c})$ is known as its discriminant, and is also given by 
the formula
$$
P(\underline{c})=c_4^4\prod_{1\le i<j \le 3} (\alpha_i-\alpha_j)^2.$$
In particular, it is zero if and only if $f_{\underline{c}}$ 
has a repeated root.
See, for example, \cite{DummitFoote}, pp. 610-612.

\begin{prop}
    Take $\underline{c}$ and $\underline{d} \in \Q_p^4.$ 
    If $\underline{c}$ and $\underline{d}$ are in the same 
    $GL_2(\Z_p)$-orbit, then $K_{\underline{c}}=K_{\underline{d}},$
    $|P(\uline{c})|_p = |P(\uline{d})|_p,$ and
    $\max(|c_1|_p, |c_2|_p, |c_3|_p, |c_4|_p)
    = \max(|d_1|_p, |d_2|_p, |d_3|_p, |d_4|_p).$
    Moreover, for each relevant field $K$ 
    $$\{\uline{c}\in \Z_p^4: |P(\uline{c} )|_p=1, \text{ and } K_{\underline{c}}=K\}$$
    is a single $GL_2(\Z_p)$-orbit.
\end{prop}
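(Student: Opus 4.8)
The plan is to treat the four assertions in turn, the first three being quick invariance statements and the last---that the stated set is a single orbit---being the substantive claim. For the invariance of $K_{\underline{c}}$: $GL_2(\Z_p) \subseteq GL_2(\Q_p)$, and Wright's theorem (in the version of the corollary above, applied over the field $\Q_p$) already tells us that $K_{\underline{c}}$ is a $GL_2(\Q_p)$-orbit invariant, hence a fortiori a $GL_2(\Z_p)$-orbit invariant. For the invariance of $|P(\underline{c})|_p$: by Lemma~\ref{P(c.g)=det^2gP(c)}, $P(\underline{c}\cdot\varrho(g)^t) = (\det g)^2 P(\underline{c})$, and for $g \in GL_2(\Z_p)$ we have $\det g \in \Z_p^\times$, so $|\det g|_p = 1$ and the $p$-adic absolute value of $P$ is unchanged. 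For the invariance of $\max_i |c_i|_p$: this is the statement that the ``content'' (the largest power of $p$ dividing all coordinates, or rather its reciprocal) is preserved; since $\underline{c}\cdot\varrho(g)^t$ has coordinates that are $\Z_p$-linear combinations of the $c_i$, we get $\max_i |(\underline c \cdot \varrho(g)^t)_i|_p \le \max_i|c_i|_p$, and applying the same bound to $g^{-1}$ (also in $GL_2(\Z_p)$) gives the reverse inequality, hence equality.

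The real content is the last sentence: fixing a relevant field $K$ (i.e.\ one arising as $K_{\underline c}$ for some $\underline c$ with $f_{\underline c}$ cubic and separable, so $K$ is $\Q_p$ itself, a cubic field, or a degree-$6$ field as in the Remarks), the set $\mathcal{O}_K := \{\underline c \in \Z_p^4 : |P(\underline c)|_p = 1,\ K_{\underline c} = K\}$ is a single $GL_2(\Z_p)$-orbit. The condition $|P(\underline c)|_p = 1$ forces $P(\underline c) \in \Z_p^\times$; in particular $\underline c \ne 0$ and $f_{\underline c}$ has no repeated root, and moreover (since $P(\underline c)\in\Z_p^\times$ while $P$ scales by $c_4^4 \prod(\alpha_i-\alpha_j)^2$ up to lower-coordinate corrections) one checks that $\underline c$ is a \emph{primitive} vector, i.e.\ $\max_i|c_i|_p = 1$, so $\underline c \in \Z_p^4 \setminus p\Z_p^4$ and $\overline{f_{\underline c}} \in \F_p[x]$ has degree matching the structure of $K$ and is separable mod $p$. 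The strategy is: given $\underline c, \underline d \in \mathcal{O}_K$, use Wright's classification over $\Q_p$ to find $h \in GL_2(\Q_p)$ with $\underline d = \underline c \cdot \varrho(h)^t$, and then show $h$ can be adjusted, by multiplying on the relevant side by an element of the $\Q_p$-stabilizer of $\underline c$, to land in $GL_2(\Z_p)$. Equivalently and more cleanly: reduce everything mod $p$. The reductions $\overline{f_{\underline c}}, \overline{f_{\underline d}}$ are separable binary cubic forms over $\F_p$ defining the ``same'' $\F_p$-étale-algebra type, so by Wright's theorem over $\F_p$ there is $\bar g \in GL_2(\F_p)$ carrying one to the other; lift $\bar g$ to $g \in GL_2(\Z_p)$. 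Replacing $\underline d$ by $\underline d \cdot \varrho(g^{-1})^t$ (still in $\mathcal{O}_K$ by the invariances just proved), we may assume $\underline c \equiv \underline d \pmod{p}$. Now one runs a Hensel/successive-approximation argument: build $g_1, g_2, \dots \in GL_2(\Z_p)$ with $g_k \equiv I \pmod{p}$ and $\underline c\cdot\varrho(g_1\cdots g_k)^t \equiv \underline d \pmod{p^{k+1}}$, using that the orbit map $g \mapsto \underline c \cdot \varrho(g)^t$ has surjective differential onto the relevant directions precisely because $P(\underline c) \in \Z_p^\times$ makes the stabilizer smooth of the expected dimension (the stabilizer is $1$-dimensional---the torus of a separable cubic---so the orbit is $3$-dimensional, matching the three ``non-content'' degrees of freedom). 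The infinite product $g_1 g_2 \cdots$ converges in $GL_2(\Z_p)$ and conjugates $\underline c$ to $\underline d$.

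The main obstacle is this last successive-approximation step: one must verify that, modulo $p^k$, the map $GL_2(\Z_p) \to \Z_p^4$, $g \mapsto \underline c \cdot \varrho(g)^t$, hits every vector congruent to $\underline d$ mod $p^k$ that also has unit discriminant---i.e.\ that there is no mod-$p^k$ obstruction beyond the invariants already identified. This is where the hypothesis $|P(\underline c)|_p = 1$ is essential: it guarantees the reduction $\overline{f_{\underline c}}$ is separable, so $\F_p[x]/(\overline{f_{\underline c}})$ is étale and the $GL_2$-orbit of $\overline{\underline c}$ over $\F_p$ is smooth (no nilpotents, stabilizer is a torus), which lets the lifting proceed one power of $p$ at a time without collision. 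A cleaner packaging, which I would actually use to avoid a hands-on Newton iteration, is: the scheme-theoretic stabilizer $\mathrm{Stab}_{\underline c} \subset GL_2$ is smooth over $\Z_p$ (being a torus, by separability of $\overline{f_{\underline c}}$), hence $H^1$-type obstructions vanish and the orbit $GL_2(\Z_p)\cdot \underline c$ is exactly the set of $\underline d \in \Z_p^4$ that are $GL_2(\Q_p)$-equivalent to $\underline c$ \emph{and} reduce into the $GL_2(\F_p)$-orbit of $\overline{\underline c}$; Wright over $\F_p$ shows the latter is controlled by $K$ and the unit-discriminant condition, closing the argument. So the proof reduces to: (i) the three invariances [easy], (ii) Wright's theorem applied over both $\Q_p$ and $\F_p$ [cited], (iii) smoothness of the stabilizer torus under the separability hypothesis [the crux], and (iv) Hensel lifting to glue the mod-$p$ equivalence to a $\Z_p$-equivalence.
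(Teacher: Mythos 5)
The first three invariance claims are handled essentially the same way in both proofs, so nothing to compare there. For the substantive fourth claim, you take a genuinely different route from the paper. The paper works by hand inside the quotient ring $\Q_p[x]/(f_{\underline{c}})$: after arguing that $\overline{f_{\underline{c}}}$ is irreducible mod $p$, it locates a root $\beta$ of $f_{\underline{d}}$ in this ring with $\Z_p$-coefficients, and then directly builds a $2\times 2$ matrix over $\Z_p$ (using a "clear the quadratic term" manipulation of $\beta$) and checks its determinant is a unit; the reducible cases are handled separately via explicit normal forms like $(0,1,1,0)$ and $(c_1,0,1,0)$, with a special workaround at $p=2$. Your proposal instead reduces mod $p$, invokes Wright over $\F_p$, and tries to lift via a Newton-type iteration justified by smoothness. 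Conceptually that is an attractive alternative, and it is closer to how one would argue in Bhargava-style orbit-counting; the paper's route is more elementary and more self-contained.

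However, your proposal has a genuine error and a genuine gap, and the error is load-bearing as written. The error: the stabilizer in $GL_2$ of a separable binary cubic form is \emph{finite} (over the splitting field it is isomorphic to a subgroup of $S_3$ permuting the three linear factors, twisted by cube roots of unity as needed), not a $1$-dimensional torus, and consequently the $GL_2$-orbit is $4$-dimensional, i.e.\ open in $\Q_p^4$. Your "three non-content degrees of freedom" count is a red herring: the set $\{\underline{c}\in\Z_p^4:\max_i|c_i|_p=1\}$ is codimension-$0$ (it is $\Z_p^4\setminus p\Z_p^4$), so $\mathcal O_K$ is $4$-dimensional and a $3$-dimensional orbit could not fill it; the dimension count you give would actually \emph{disprove} the proposition. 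With the correct count (finite \'etale stabilizer, hence an open orbit) the lifting strategy becomes coherent. The gap: even so, the one-power-of-$p$-at-a-time iteration needs the differential of the orbit map $g\mapsto\underline{c}\cdot\varrho(g)^t$ at $g=I$ to have \emph{unit} Jacobian determinant as a map $\mathfrak{gl}_2(\Z_p)\to\Z_p^4$, and this must be tied to $|P(\underline{c})|_p=1$. In fact one can check (e.g.\ at $F=xy(x-y)$, where the $4\times4$ Jacobian with respect to $(a,b,c,d)\in\mathfrak{gl}_2$ and the monomial basis works out to determinant $\pm1$, and then by the semi-invariance of both sides under $GL_2$) that this Jacobian determinant equals $\pm P(\underline{c})$; but you assert the surjectivity rather than establishing it, and this computation is precisely where the hypothesis on the discriminant does its work. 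Finally, your reduction-mod-$p$ step quietly uses that $K_{\underline{c}}=K_{\underline{d}}$ together with unit discriminant forces $K_{\overline{\underline{c}}}=K_{\overline{\underline{d}}}$ over $\F_p$, which is true but needs the same case-by-case examination (irreducible / linear$\times$quadratic / split) that the paper carries out, so you have not actually avoided the case analysis. If you fix the stabilizer dimension and supply the Jacobian computation, this does give an alternative proof.
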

\begin{rmk}
    For $\uline{c} \in \Q_p^4$ such that $|P(\uline{c})|=1,$
    $\uline{c} \in \Z_p^4 \iff \max(|c_1|_p, |c_2|_p, |c_3|_p, |c_4|_p)=1.$
\end{rmk}
\begin{proof}
    First assume that $\underline{c}$ and $\underline{d}$ are in the same 
    $GL_2(\Z_p)$-orbit. Then $K_{\underline{c}}=K_{\underline{d}},$ because they 
    are in the same $GL_2(\Q_p)$ orbit, and $|P(\uline{c})|_p = |P(\uline{d})|_p,$ by lemma \ref{P(c.g)=det^2gP(c)}
    (since the determinant of an element of $GL_2(\Z_p)$ lies
    in $\Z_p^\times$). 

    Now, $\max(|c_1|_p, |c_2|_p, |c_3|_p, |c_4|_p)=p^{-k}$ if and 
    only if $k$ is the smallest integer such that 
    $\uline{c} \in p^k \cdot \Z_p^4.$ From this description, it's 
    clear that $\max(|d_1|_p, |d_2|_p, |d_3|_p, |d_4|_p)
    \le  \max(|c_1|_p, |c_2|_p, |c_3|_p, |c_4|_p)$ whenever 
    $\uline{d}  = \uline{c} h$ for some $4\times 4$ matrix 
    $h$ with entries in $\Z_p.$ In particular, equality 
    holds when $h \in GL_4(\Z_p),$ and this includes the case 
    $h = \rho(g)^t$ for some $g \in GL_2(\Z_p).$

    Now, take $\uline{c},\uline{d} \in \Z_p^4$ such that 
    $|P(\uline{c} )|_p = |P(\uline{d})|_p = 1$ and 
    $K_{\uline{c}} = K_{\uline{d}}.$ 
    We must prove that $\ul c$ and 
    $\ul d$ are in the same orbit. 
    There are three cases, depending 
    on whether $f_{\ul c}$ is irreducible, the product of a linear factor by an irreducible quadratic, or the 
    product of three linear factors. The case when 
    $f_{\ul c}$ is irreducible is most challenging, and 
    we tackle it in several steps. 

    The first step is to show that  
     the element $\overline{f}_{\uline{c}}$ of $\Z/p\Z[x]$ 
    obtained by reducing the coefficients of $f_{\uline{c}}$ 
    mod $p$ is also irreducible.   
    Since $P(\uline{c}) \not \equiv 0 \pmod{p},$ it
    follows that $\ol f_{\ul c}$
    can't have a double zero. If it had a simple zero, 
    then, by Hensel's lemma, $f_{\uline{c}}$ 
    would have a zero in $\Z_p.$ Thus $\overline{f}_{\uline{c}}$
    must be irreducible. This implies that $c_1$ and $c_4$ are
    in $\Z_p^\times.$ 
    
    Since $K_{\uline d} = K_{\uline c},$ it follows that
    $f_{\uline d}$ must also be irreducible and remain irreducible mod $p,$ and hence that $d_1$ and $d_4$ must also be units. Acting by a scalar matrix in $GL_2(\Z_p)$ we can multiply all entries of $\uline c$ (or $\uline d$) by any scalar in $\Z_p^\times.$ From this, it follows that 
    it suffices to treat the case when $f_{\uline{c}}$ and $f_{\uline{d}}$ are monic.

For the next step, we consider the
finite extension $\Q_p(x)/(f_{\uline{c}}(x)).$
It must contain a root of $f_{\uline{d}}.$
Let $\beta= b_0 + b_1x+b_2x^2 + (f_{\uline{c}})$ 
be some such root. A priori,  $b_0,b_1, b_2$
are in $\Q_p.$ Our next step is to prove that they are
actually in $\Z_p.$ And that at least one of $b_1, b_2$
is in $\Z_p^\times.$

Suppose not. Then there is a unique positive integer
    $k$ such that $p^kb_0, p^kb_1,$ and $p^k b_2$ are all 
    in $\Z_p$ and at least one of them is in $\Z_p^\times.$
    Reducing mod $p$ gives a nonzero element of the finite 
    field $\Z/p\Z(x)/(\overline{f}_{\uline{c}}).$
    
    But $\beta$ was a zero of $f_{\uline{d}}(x) = x^3+d_3x^2+d_2x+d_1,$ so $p^k\beta$ is a zero of 
    $x^3+d_3p^kx^2+d_2p^{2k}x+d_1p^{3k}.$ So, its image in 
    $\Z/p\Z[x]/(\overline{f}_{\uline{c}})$ must be a zero 
    of $x^3$ and this is a contradiction. Thus $b_0,b_1, b_2 \in \Z_p.$

    If we reduce $b_0,b_1, b_2$ mod $p$ we get a zero 
    of $\overline{f}_{\uline{d}}$ in 
     $\Z/p\Z[x]/(\overline{f}_{\uline{c}}).$ Since  $\overline{f}_{\uline{d}}$ is irreducible, it can't be 
     in $\Z/p\Z,$ so at least one of $b_1, b_2$ must be in 
     $\Z_p^\times.$

     Next, we prove that there exist $a_0,a_1, e_0, e_1\in \Z_p$
     such that 
     $(a_0+a_1x) \cdot \beta \equiv (e_0+e_1x)\pmod{f_{\uline{c}}(x)}.$
     Indeed, $x^3 \equiv -(c_1+c_2x+c_3x^2)\pmod{f_{\uline{c}}(x)},$
     so
     for $a_0,a_1 \in \Z_p$ we have
     $$(a_0 + a_1x)\beta = a_0b_0-a_1b_2c_1 + (a_1b_0 +b_1a_0-a_1b_2c_2)x + (a_0b_2+a_1b_1-a_1b_2c_3)x^3 
     \pmod{f_{\uline{c}}(x)}.
     $$
     If 
     $b_2$ is a unit then we can take $a_1=1, a_0 = c_3-b_1b_2^{-1}.$ And if $b_2 \in p\Z_p$ then $b_1 \in \Z_p^{\times}.$ But then $b_1-b_2c_3$ is also in $\Z_p^\times,$ and we can take $a_0=1$ and $a_1 = -b_2(b_1-b_2c_3)^{-1}.$

    Now we are ready to prove that $\uline{c}$ and $\uline{d}$ are in the same $GL_2(\Z_p)$-orbit.
     Let $\alpha, \varepsilon,$ and $\gamma$ be the images
    of $a_0+a_1x, e_0+e_1x$ and $x$ in the quotient field 
    $\Q_p[x]/f_{\uline{c}}(x)).$ Note that $(\alpha, \varepsilon)=(1, \gamma)g$ where $g = \left(\begin{smallmatrix}
        a_0&e_0\\ a_1& e_1 
    \end{smallmatrix}\right).$ 
    Also, $\beta = \alpha^{-1} \epsilon.$
    The matrix $g$ has entries in $\Z_p.$ We claim that it is 
    in $GL_2(\Z_p).$ If it were not, its columns would be linearly dependent modulo, $p.$ But if this were the case, then $\alpha^{-1} \epsilon$ would be an element of $\Z_p^\times,$ and it's not.
    Since $f_{\overline{d}}$ vanishes
    at $\beta,$ it follows that $F_{\overline{d}}$ vanishes
    at $(\alpha, \varepsilon)$. But then 
    $F_{\overline{d} \cdot \varrho(g)^t}$ vanishes at $(1, \gamma),$ and $f_{\overline{d} \cdot \varrho(g)^t}$ vanishes at $\gamma.$ This forces $\overline{d} \cdot \varrho(g)^t$ 
    to be a scalar multiple of $\uline{c},$ so $\uline{c}$ and $\uline{d}$ are in the same orbit.

    We are done in the case when $f_{\uline{c}}$ is irreducible.
    We now consider the case when it is reducible. 

    The first step is to prove that, whenever 
    $f_{\uline{c}}$ is reducible, $\uline{c}$ is in the 
    same orbit as an element of the form $(c_1', c_2' 1, 0).$  Assume $ \min(|c_1|_p, |c_2|_p, |c_3|_p, |c_4|_p) =|P(\uline{c} )|_p=1,$ and $F_{\uline{c}}$ factors 
    as $(a_1x+a_2y)(b_1x^2+b_2xy+b_3y^2)$ with all coefficients
    in $\Q_p.$ Then it's not hard to show that it has a 
    factorization in the same form where all coefficients are
    in $\Z_p$ and there exists $i,j$ such that $a_i$ and $b_j$
    are in $\Z_p^\times.$ But then $(a_1, a_2)^t$ is the first column of an element $g$ of $GL_2(\Z_p).$ Acting by the 
    inverse we may transform $(a_1x+a_2y)$ to $x.$ Thus, any quadruple $\uline{c}$ such that $F_{\uline{c}}$ is reducible
    is in the same $GL_2(\Z_p)$ orbit as one where $c_4=0.$
    But if $c_4=0,$ then $P(\uline{c})=c_3^2(c_2^3-4c_1c_3).$ If 
    all entries are in $\Z_p$ and this is in $\Z_p^\times,$ then 
    $c_3 \in \Z_p^\times.$ Since each scalar matrix acts by the 
    corresponding scalar, we deduce that any quadruple $\uline{c}$ such that $F_{\uline{c}}$ is reducible is in the 
    same $GL_2(\Z_p)$ orbit as an element with $c_4=0, c_3=1.$
    So, suppose $F_{\uline{c}}(x,y) = x(c_1x^2+c_2xy+y^2).$

    If $c_1x^2+c_2xy+y^2$ factors in $\Q_p[x,y]$ then it factors
    as $(a_1x+y)(a_2x+y)$ where $a_1,a_2 \in \Z_p.$ Acting by $\left( \begin{smallmatrix}
        1&-a_1\\ 0 & 1
    \end{smallmatrix}\right)$ transforms $a_1x+y$ to $y,$ 
    while leaving $x$ fixed. Thus, 
    if $F_{\uline{c}}$ is a product of distinct linear factors, then it is in the same $GL_2(\Z_p)$-orbit as $(0,c_2',1,0)$
    where $c_2'=a_2-a_1.$ Now, 
    $P((0,c_2',1,0))=(c_2')^2,$ so $c_2' \in \Z_p^{\times}.$ 
    Then $\left( \begin{smallmatrix}
        1&0 \\ 0 & c_2'
    \end{smallmatrix}\right)$ transforms $xy(c_2'x+y)$ to 
    $xy(x+y).$ Thus, any tuple $\uline{c}$ such that $F_{\uline{c}}$ is a product of distinct linear factors
    is in the same $GL_2(\Z_p)$-orbit as $(0,1,1,0).$

    Now suppose $c_1x^2+c_2xy+y^2$ does not factor. If $p$ is not $2$ then we can act by $\left( \begin{smallmatrix}
        1&-c_2/2 \\ 0&1 
    \end{smallmatrix}\right)$ to get rid of $c_2.$ That is, 
    each $GL_2(\Z_p)$-orbit with a linear factor 
    contains an element of the form $(c_1,0,1,0).$  
    Since $P((c_1,0,1,0))=-4c_1,$ we deduce (still under the 
    assumption that $p\ne 2$) that $c_1 \in \Z_p^\times.$
    And since $$(c_1, 0,1,0) \cdot \varrho\begin{pmatrix}
     a& 0 \\ 0 & 1\end{pmatrix} = (a^2c_1,0,1,0),$$
    it follows that 
     two quadruples which generate the same quadratic extension
    are in the same $GL_2(\Z_p)$-orbit. 
    
    If $p=2$ this approach 
    doesn't work: for example, the $GL_2(\Z_p)$-orbit of $x^2+xy+y^2$ does not contain any element of the form $cx^2+y^2$ 
    with $c \in \Z_2.$ However, if $x^2+c_2x+c_1$
    and $x^2+d_2x+d_1$ generate the same quadratic extension, 
    then a simpler version of the argument given in the cubic case works. Take $\gamma$ a zero of $x^2+c_2x+c_1$ and 
    $\beta$ a zero of $x^2+d_2x+d_1$ then $\beta=b_0+b_1\gamma$
    where $b_0 \in \Z_p$ and $b_1 \in \Z_p^{\times}.$
    Then $\uline{c}$ and $\uline{d} \cdot \varrho\left( \begin{smallmatrix}
        1& b_0 \\ 0 & b_1
    \end{smallmatrix}\right)$ are unit scalar multiples of each 
    other, and hence $\uline{c}$ and $\uline{d}$ are in the same orbit.

\end{proof}

\subsection{The results of Jiang and Rallis} 

In this section we briefly describe the main results of \cite{Jiang-Rallis}. 
 First, we take a continuous function 
$f: G(\A) \times \C \to \C$
such that 
\begin{equation}\label{eq:ind rep eqvar prop}
    f_s( nm(g) h) = | \det g|^{s}f_s(h),\end{equation}
for all $h \in GL_2(\A), n \in N(\A)$ and $h \in G_2(\A).$
(Here, the value of $f$ at $h \in G_2(\A)$ and $s \in \C$
is denoted $f_s(h)$ rather than $f(h,s).$)
We also select a row vector $\sigma \in \Q^4,$
and let 
$$I^\sigma(s, f_s) := \int_{(\Q\bs \A)^5 } f_s(w_0n(x,y,z,u,v), s) \psi((v,u,y,x)\cdot \underline{\sigma}  ) \, dn,$$
and 
$$w_0 =\begin{pmatrix}
    0 & 0 & 0 & 0 & 0 & 0 & 1 & 0 \\
0 & 0 & 0 & 0 & 0 & 0 & 0 & 1 \\
0 & 0 & -1 & 0 & 0 & 0 & 0 & 0 \\
0 & 0 & 0 & 1 & 0 & 0 & 0 & 0 \\
0 & 0 & 0 & 0 & 1 & 0 & 0 & 0 \\
0 & 0 & 0 & 0 & 0 & -1 & 0 & 0 \\
1 & 0 & 0 & 0 & 0 & 0 & 0 & 0 \\
0 & 1 & 0 & 0 & 0 & 0 & 0 & 0
\end{pmatrix}.$$
Here, the integral over $(\Q/\A)$ really means an integral 
over a measurable fundamental domain, and $dn$ is the 
product measure. 
For $h_1 \in G_2(\A),$  We define 
$R(h_1)f_s(h) = f_s( hh_1).$ Notice that $R(h_1)f_s$ is another
function which satisfies \eqref{eq:ind rep eqvar prop} so 
$I^\sigma(s, R(h_1)f_s)$ is defined as well. Moreover, 
$I^\sigma( s, R(m(\gm))f_s)= I^{\sigma \cdot \rho(\gm)^t}(s, f_s),$ so it suffices to study $I^\sigma(s, f_s)$ for 
$\sigma$ ranging over a set of representatives for the 
orbits of $GL_2(\Q),$ acting on $\Q^4$ via $\rho.$

The first main result of Jiang and Rallis states that 
$$I^\sigma(s, R(g).f_s) = \int_{N(\Q) \bs N(\A)} E(ng,s; f_s) \psi_{\sigma}(n) \, dn,$$
provided that $\sigma$ corresponds to a polynomial with 
distinct roots. Here $E(g, s, ; f_s)$ is a certain 
function $G_2(\Q) \bs G_2(\A) \times \C \to \C$ called an 
Eisenstein series. We won't go into the details of this part, 
but we mention it to motivate the following. From now on 
we shall only consider $\sigma$ which are attached to 
polynomials with distinct roots.

To describe the next result, we assume not only that 
$f_s$ satisfies \eqref{eq:ind rep eqvar prop}, but also that 
$f_s$ 
factors as a special type of product. For each prime 
$p,$ we can define 
$$f_{s,p}^\circ(n m(g) k) = |\det g|_p^{s}, \qquad 
g \in GL(2, \Q_p), n\in N(\Q_p),  k \in G_2(\Z_p).$$
To see that this defines a function on all of $G_2(\Q_p)$
recall that any $h \in G_2(\Q_p)$ can be expressed as 
$bk$ with $b \in P(\Q_p)$ and $k \in G_2(\Z_p).$ Then 
$b$ can be expressed as $nm(g)$ for some $n \in N(\Q_p)$ and 
$g \in GL_2(\Q_p).$ The expression $h=bk$ is not unique, but 
if $b_1k_1=b_2k_2$ then $b_2 = b_1 \beta$ with $\beta \in P(\Z_p).$ It follows that, if $b_j=n_jm(g_j)$ for $j=1,2$
then $g_2 = g_1\gm$ for some $\gm \in GL_2(\Z_p).$ But then 
$|\det \gm|_p=1,$ so $|\det g_1|_p = |\det g_2|_p.$

We now assume that 
$$f_s(h) = f_{s,\infty}(h_\infty) \prod_p f_{s,p}(h_p)
\qquad \text{ for } h=(h_\infty, h_2, h_3, \dots, h_p, \dots) \in G_2(\A),$$
and that the set of primes such that $f_{s,p} \ne f_{s,p}^\circ$
is finite. 
(Note that this second condition ensures that the infinite
product is always convergent because all but finitely 
many of its factors are one.)
This ensures that 
$$I^\sigma( s, f_s) 
= I_\infty^\sigma  ( s, f_{s, \infty})
\cdot \prod_p I_p^\sigma( s, f_{s,p}), $$
where
$$I_p^\sigma( s, f_{s,p}) = \int_{\Q_p^5}
f_{s,p}( w_0 
n) \psi_p ( (v,u,y,x) \cdot \us ) \, dn_p,$$
and $I_\infty$ is defined analogously. Here, $dn_p$ denotes
the product measure on $\Q_p^5.$ 

The second main result of Jiang and Rallis is to compute 
$I^{\sigma}_p(s, f_{s,p}^\circ)$ under some additional 
hypotheses. 

\begin{thm}[Cf. \cite{Jiang-Rallis},theorem 2]
Assume that $\psi_p$ is trivial on $\Z_p$ but not on $p^{-1} \Z_p.$ 
    \begin{enumerate}
        \item If $\sigma = (0,1,1,0)$ then $$I^{\sigma}_p(s, f_{s,p}^\circ) = \frac{(1-p^{-3s})(1-p^{-3s+1})(1-p^{-6s+2})(1-p^{-9s+3})}{(1-p^{-3s+1})^3}.$$
        \item If $\sigma = (0,1,0,a)$ with $a \in \Z_p^\times$ such that $-a$ is not a square, then 
        $$I^{\sigma}_p(s, f_{s,p}^\circ) = \frac{(1-p^{-3s})(1-p^{-3s+1})(1-p^{-6s+2})(1-p^{-9s+3})}{(1-p^{-3s+1})(1-p^{-6s+2})}.$$
        \item If $\sigma = (1,0,0,a)$ with $a \in \Z_p^\times$ which is not a cube, then 
        $$I^{\sigma}_p(s, f_{s,p}^\circ) = \frac{(1-p^{-3s})(1-p^{-3s+1})(1-p^{-6s+2})(1-p^{-9s+3})}{(1-p^{-9s+3})}.$$
    \end{enumerate}
\end{thm}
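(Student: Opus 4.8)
The plan is to follow the method of Jiang and Rallis: first reduce $\sigma$ to one of a short list of representatives, then evaluate each resulting local integral by decomposing the domain of integration and using the Iwasawa structure of $G_2(\Q_p)$.

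\textbf{Reduction to the three $\sigma$'s.} Since $f_{s,p}^\circ$ is right-invariant under $G_2(\Z_p)$, and $m(\gamma)\in G_2(\Z_p)$ whenever $\gamma\in GL_2(\Z_p)$, the substitution $n\mapsto m(\gamma)^{-1}n\,m(\gamma)$ together with the equivariance $I^\sigma(s,R(m(\gamma))f_s)=I^{\sigma\cdot\rho(\gamma)^t}(s,f_s)$ shows that $I^\sigma_p(s,f_{s,p}^\circ)$ depends only on the $GL_2(\Z_p)$-orbit of $\sigma$ under $\rho$. By the Proposition on $GL_2(\Z_p)$-orbits and the dictionary \eqref{rhoAndvarrho} relating $\rho$ to $\varrho$, every orbit of unit discriminant attached to a polynomial with distinct roots contains, after scaling by an element of $\Z_p^\times$ (which itself lies in the orbit), exactly one of the three quadruples in the statement: $\sigma=(0,1,1,0)$, for which $F_{\sigma\cdot w_1}$ is a product of three distinct linear forms over $\Q_p$ (the split case, attached algebra $\Q_p\times\Q_p\times\Q_p$); $\sigma=(0,1,0,a)$ with $-a$ a non-square unit, for which $g_\sigma\sim x^2+a$ and the algebra is $\Q_p\times\Q_p(\sqrt{-a})$; and $\sigma=(1,0,0,a)$ with $a$ a non-cube unit, for which $g_\sigma\sim x^3-a$ is irreducible. (In this last case the hypothesis can hold only when $p\equiv 1\pmod 3$ -- for $p\equiv 2\pmod 3$ every unit of $\Z_p$ is a cube -- so $\Q_p$ then automatically contains the cube roots of $1$: we are in the standing setting of Jiang--Rallis, and $\Q_p[x]/(x^3-a)$ is the unramified cubic field. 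When $\Q_p$ contains the cube roots of $1$ these three $\sigma$ do exhaust the distinct-root orbits of unit discriminant, so the theorem is then a complete list.) Hence it suffices to evaluate the integral for these three $\sigma$.

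\textbf{Evaluation.} Write $I^\sigma_p(s,f_{s,p}^\circ)=\int_{\Q_p^5}f_{s,p}^\circ\big(w_0\,n(x,y,z,u,v)\big)\,\psi_p\big((v,u,y,x)\cdot\sigma\big)\,dn$. Because $w_0$ is a signed permutation matrix, one checks that $w_0\,n(x,y,z,u,v)\in G_2(\Z_p)$ precisely when all five coordinates lie in $\Z_p$; on that region $f_{s,p}^\circ\equiv 1$ and $\psi_p\equiv 1$ (its linear form has $p$-integral coefficients in all three cases), so the contribution is $\operatorname{Vol}(\Z_p^5)=1$. On the complement one must supply the Iwasawa decomposition $w_0 n=n'\,m(g)\,k$ with $n'\in N(\Q_p)$, $g\in GL_2(\Q_p)$, $k\in G_2(\Z_p)$, so that $f_{s,p}^\circ(w_0 n)=|\det g|_p^{s}$. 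Following Jiang--Rallis -- and as carried out in detail in Pleso's thesis -- I would split $\Q_p^5$ into the sixteen regions cut out by whether each of $x,y,u,v$ lies in $\Z_p$, and on each region sweep $w_0 n$ into the form $n'm(g)k$ using only the commutation relations among the twelve root subgroups of $G_2$, the rank-one relation rewriting $x_\gamma(c)$ for $|c|_p>1$ as a product of an opposite-root-subgroup element, a torus element, a Weyl representative and another root-subgroup element, and the factorization $G_2(\Q_p)=P(\Q_p)G_2(\Z_p)$. (The centre of $N$ -- the highest-root direction, carrying the variable $z$ -- is swept last; that $w_0$ moves it into the opposite unipotent $N^-$ is what keeps the $z$-integral from diverging.) On each region $|\det g|_p$ is then an explicit monomial in the absolute values of the coordinates; accounting also for the volumes of the level sets $\{|{\cdot}|_p=p^{m}\}$, the one-variable integrals become geometric series in the four monomials $p^{-3s}$, $p^{-3s+1}$, $p^{-6s+2}$, $p^{-9s+3}$ -- the four shifts that occur in the normalizing factor of the relevant $G_2$ Eisenstein series and in $\zeta_K(3s-1)/\zeta_F(3s-1)$.

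\textbf{Case distinction and main obstacle.} For most of the sixteen regions $\psi_p$ becomes trivial after the reduction and the contribution is $\sigma$-independent; the dependence on the splitting type of $g_\sigma$ enters only through the one region on which $\psi_p$ survives, where the remaining integral collapses -- using that $\psi_p$ has conductor exactly $\Z_p$ -- to a character sum over $\Z/p\Z$ whose value is controlled by the number of roots of $\overline{g}_\sigma$ in $\F_p$: three in the split case, one when $F_{\sigma\cdot w_1}$ is a linear form times an irreducible quadratic (i.e.\ $-\overline a$ not a square), and zero in the cubic-field case (i.e.\ $\overline a$ not a cube). Assembling the sixteen contributions and simplifying yields the three displayed rational functions, the denominators $(1-p^{-3s+1})^3$, $(1-p^{-3s+1})(1-p^{-6s+2})$, and $(1-p^{-9s+3})$ being precisely where this splitting-type-dependent sum is recorded. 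The main obstacle is the bookkeeping in the sweeping step: because the root system of $G_2$ is intricate and $N$ is non-abelian, one must carry both the additive character and the $|\det g|_p$ factor correctly through a chain of commutations and rank-one moves on each of the sixteen regions and check that the resulting integrals are clean geometric series with no leftover boundary terms. The one genuinely delicate point is evaluating the surviving character sum; for these three $\sigma$ it is elementary because each has at most two nonzero coordinates, whereas for a general cubic $g_\sigma$ with $p\equiv 5\pmod 6$ this is exactly the sub-integral isolated in Conjecture \ref{conj0}.
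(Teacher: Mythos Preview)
The paper does not prove this theorem; it is quoted verbatim as Theorem~2 of \cite{Jiang-Rallis} and followed only by remarks and a description of the Jiang--Rallis method sufficient to motivate Pleso's computations. So there is no ``paper's own proof'' to compare against; what the paper does provide is an outline of the bifurcation strategy (splitting $I^\sigma_p$ into $I_+$ and $I_-$ according to whether $v\in\Z_p$, then iterating on $u,z,y$ to obtain sixteen sub-integrals $I_{\pm\pm\pm\pm}$, each of which involves only $f^\circ_{s,p}(n^-(-x,0,0,0,0))$), and your proposal is recognizably a sketch of that same strategy.

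Two points where your sketch is inaccurate relative to the method as described. First, the sixteen regions come from bifurcating on $v,u,z,y$, not on $x,y,u,v$; the $x$-integral is never bifurcated but is evaluated in closed form (Lemma~\ref{lem1}) once the other four variables have been swept. Second, your claim that ``the dependence on the splitting type of $g_\sigma$ enters only through the one region on which $\psi_p$ survives'' is an oversimplification: in the paper's own computation (for $\sigma=(1,0,b,c)$ with $p\equiv 5\pmod 6$) the nonvanishing pieces are $I_{++++}$, $I_{+-++}$, $I_{-+++}$, and $I_{-+-+}$, and the splitting behaviour is felt in both $I_{-+++}$ (via the root count $N(b,c)$) and $I_{-+-+}$ (via the character sum of Conjecture~\ref{conj0}). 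In the Jiang--Rallis setting the bookkeeping is lighter because $\sigma$ has fewer nonzero entries, but the $\sigma$-dependence is still spread across several sub-integrals rather than concentrated in one.
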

\begin{rmks}
    \begin{enumerate}
        \item In each case the numerator is the same, and in each case, some part of it can be cancelled with the denominator-- but never the same part.
        \item The result can also be expressed in terms of 
        ``zeta functions of non-Archimedean local fields.
        \item Jiang and Rallis actually prove the corresponding result for any non-Archimedean local field, not just $\Q_p.$
        \item For $k \in GL(2, \Z_p),$ we have 
        $$I_p^{\sigma \cdot \rho(k)^t}(s, f_{s,p}^\circ)
        = I_p^\sigma( s,f_{s,p}^\circ).$$ So, the result 
        actually determines the value of $I_p^\sigma( s,f_{s,p}^\circ)$ for any $\sigma = \uline{c} w_1^{-1} \in \Z_p^4$ such that $P(\uline{c}) \ne 0$ and $K_{\uline{c}}$
        is either $\Q_p,$ or the quadratic extension obtained 
        by adjoining the square root of a unit, or the 
        cubic extension obtained by adjoining the cube 
        root of a unit.
        \item The restriction to $a \in \Z_p^\times$ is not such a big deal. For any fixed $\underline{c} \in \Q^4$ with $P(\uline{c}) \ne 0,$ we have $P(\uline{c})\in \Z_p^\times$ for all but a finite number of $p.$ If a quadruple $\uline{c}$ such that $P(\uline{c})\in \Z_p^\times$ is in the same $GL_2(\Z_p)$-orbit as an element of the form $(0,1,0,a)$ or $(1,0,0,a),$ then $a$ must be in $\Z_p^\times.$
        \item The assumption that $\psi$ is trivial on $\Z_p$ but not 
        on $\Z_p^\times$ is a natural one. If $\psi=\prod_v\psi_v$ is any 
        character, then there are only finitely many $p$ where it does
        not hold. And, since we work over $\Q$ rather than an arbitrary number field, we can take $\psi$ to be the function $e$ introduced before, and then the assumption is true for all $p.$ 
        \item The restriction to cubic extensions which 
        are obtained by adjoining the cube root of a unit is 
        more restrictive. 
        Indeed, if $p \equiv 2 \pmod{3}$ then 
        it follows from Hensel's lemma that every element of 
        $\Z_p^\times$ is a cube. 
    \end{enumerate}
\end{rmks}

In order to motivate the results of Pleso, we briefly describe 
some of the methods in Jiang-Rallis. 
First, note that 
$$
w_0 n(x,y,z,u,v)w_0^{-1} = n^{-}(-x,y,z,u,-v).
$$
It follows that 
$$\begin{aligned} I_p^{\sigma}(s, f^\circ_{s,p}) &= \int\limits_{\Q_p}\int\limits_{\Q_p}\int\limits_{\Q_p}\int\limits_{\Q_p}\int\limits_{\Q_p}f^\circ_{s,p}
(n^-(-x,y,z,u,-v) ) \psi ((v,u,y,x) \cdot \sigma) \, dx \,dy\, dz\, du \,dv,\\
& = I_++I_-,
\end{aligned}
$$
where 
$$I_+=\int\limits_{\Z_p}\int\limits_{\Q_p}\int\limits_{\Q_p}\int\limits_{\Q_p}\int\limits_{\Q_p}f^\circ_{s,p}
(n^-(-x,y,z,u,-v) ) \psi ((v,u,y,x) \cdot \sigma) \, dx \,dy\, dz\, du \,dv,$$
and $I_-$ is the similar integral where $v$ is integrated over 
$\Q_p - \Z_p$ and each of the other four variables is 
integrated over $\Q_p.$
Now, $n^-(-x,y,z,u,-v) =n^-(-x,y,z,u,0) n^-(0,0,0,0,-v),$
If $v \in \Z_p,$ then $n^-(0,0,0,0,-v) \in G_2(\Z_p),$
so  $f^\circ_{s,p}
(n^-(-x,y,z,u,-v) )= f^\circ_{s,p}
(n^-(-x,y,z,u,0) ).$ Moreover, if we assume $\sigma \in \Z_p^4,$
then $v \in \Z_p$ implies $\psi_p(\sigma_1 v) = 1.$
Hence, 
$$I_+=\int\limits_{\Q_p}\int\limits_{\Q_p}\int\limits_{\Q_p}\int\limits_{\Q_p}f^\circ_{s,p}
(n^-(-x,y,z,u,0) ) \psi ((0,u,y,x) \cdot \sigma) \, dx \,dy\, dz\, du,$$

As we mentioned before, each of the roots of the $G_2$
root system corresponds to a one dimensional subgroup of 
the group $G_2,$ and the groups $N$ and $N^-$ are each 
formed by bundling five of those groups together. For each 
root $\gamma,$ we have made a choice of parametrization
$x_\gamma$ from $R$ to the corresponding group.

Now, for each root $\gamma$ there is a homomorphism 
$\varphi_{\gamma} : SL_2 \to G_2$ such that 
$$x_\gamma(a) = \varphi_\gamma 
\begin{pmatrix}
    1&a\\ 0 &1 
\end{pmatrix}
\qquad 
x_{-\gamma}(a) = \varphi_\gamma 
\begin{pmatrix}
    1&0\\ a &1 
\end{pmatrix}.
$$
If $v \in \Q_p-\Z_p,$ then the following $SL_2$-identity
\begin{equation}\label{SL2Iwasawa}
\bpm 1& 0 \\ -v & 1 \epm 
= \bpm v^{-1}& 0\\0 & v\epm \bpm 1& -v\\ 0 & 1 \epm 
\bpm 0 & 1 \\ -1 &v^{-1}\epm, 
\end{equation}
gives rise to a $G_2$-identity
$$
x_{-\alpha - 3\beta}(v) = 
h(1,v^{-1}) x_{\alpha+ 3\beta}(-v) \varphi_{\alpha+3\beta}
\begin{pmatrix}
     0 & 1 \\ -1 &v^{-1}
\end{pmatrix}.
$$
Now, $\begin{pmatrix}
     0 & 1 \\ -1 &v^{-1}
\end{pmatrix}\in G_2(\Z_p)$ and 
$h(1,v ) n^-(-x,y,z,u,0) h(1,v^{-1})
= n^-(-vx,y, v^{-1}z, v^{-1}u, 0),$
so 

$$\begin{aligned}
    I_-&=\int\limits_{\Q_p-\Z_p}\int\limits_{\Q_p}\int\limits_{\Q_p}\int\limits_{\Q_p}\int\limits_{\Q_p}f^\circ_{s,p}
(h(1,v^{-1})n^-(-vx,y,v^{-1}z,v^{-1}u,0) x_{\alpha+3\beta}(-v)) \psi ((0,u,y,x) \cdot \sigma) \, dx \,dy\, dz\, du,\\
&=\int\limits_{\Q_p-\Z_p}\int\limits_{\Q_p}\int\limits_{\Q_p}\int\limits_{\Q_p}\int\limits_{\Q_p}
|v|^{-3s+1}
f_{s,p}^\circ
(n^-(-x,y,z,u,0) x_{\alpha+3\beta}(-v)) \psi ((0,uv,y,x/v) \cdot \sigma) \, dx \,dy\, dz\, du,\end{aligned}$$
For the second identity, we used \eqref{eq:ind rep eqvar prop}
and made changes of variable in $x,u,$ and $z.$

Moreover, 
$$
n(-x,y,z,u,0) x_{\alpha+3\beta }(-v) 
= x_{\alpha+3\beta}(-v)x_\beta(uv)n^-(-x+vz-3uvy+u^3v^2, 
y-u^2v , z-u^3v, u, 0).
$$
Making additional changes of variable, we obtain 
$$I_-=
\int\limits_{\Q_p-\Z_p}\int\limits_{\Q_p}\int\limits_{\Q_p}\int\limits_{\Q_p}\int\limits_{\Q_p}
|v|^{-3s+1}
f_{s,p}^\circ
(n^-(-x,y,z,u,0) ) \psi ((0,uv,y+u^2v,x/v+z-3uy-3u^3v) \cdot \sigma) \, dx \,dy\, dz\, du.
$$
The next step is to split each of the integrals 
$I_+$ and $I_-$ into two pieces based on whether $u \in \Z_p$
or $u \in \Q_p-\Z_p.$ After that, we can split 
on $z$ and $y,$ eventually obtaining an expression of the 
original $I^\sigma_p(s, f_{s,p})$ as a sum of sixteen integrals, 
$I_{++++}$ to $I_{----}$
each of which involves only 
$f_{s,p}^\circ(n^-(-x,0,0,0,0)),$ at the expense of having a
more complicated expression inside of $\psi.$
For example $I_{++++}$ corresponds to taking all variables 
in $\Z_p,$ $I_{+-++}$ corresponds to all in $\Z_p$ except $u,$
which is in $\Q_p-\Z_p,$ and so on. 

\subsection{The results of Pleso} 
By restricting their attention to cubic extensions which were
obtained by adjoining a cube root, Jiang and Rallis could 
benefit in two ways. 
First, they could assume that, in the case when 
$\sigma$ was attached to a cubic extension, it was of the form 
$(1,0,0,a).$ This simplified the argument of $\psi$ because 
some of the complicated expressions that would otherwise 
have entered were multiplied by $0.$ So, each 
of their sixteen integrals was simpler than it would have been 
if they'd considered the more general form $(1,0,b,a).$
Second, the assumption assures that certain finite fields which arise in their 
calculations contains three cube roots of $1,$ and they were able 
to use this to their advantage in computing some of the 
sixteen sub-integrals. 

In order to extend the results of Jiang and Rallis to the 
case when the roots of the  polynomial $g_{\sigma}$ attached 
to $\sigma$ generate a cubic extension which can't be 
generated by adjoining a cube root, one has to do two things. 
First, one must re-do the bifurcation process, keeping track 
of the additional complexity that appears in the argument 
of $\psi$ when $\sigma = (1,0,b,a)$ with $b \ne 0.$
Second, one must compute the sixteen generalized sub-integrals 
thus obtained. In some cases, this is a straightforward matter
of checking that the arguments given by Jiang and Rallis in the 
$b=0$ case still work. But in other cases, new ideas are 
required. 

\section{Some technical lemmas} 

Throughout our computations, we assume that $\psi$ is a continuous 
homomorphism $\Q_p \to \C^\times$ which is trivial 
on $\Z_p$ but not on $p^{-1} \Z_p.$ For simplicity, the reader may assume that it is 
$e_p.$ 

The following are some technical lemmas that are applicable to multiple cases in the computations. 

\maketitle
\newcommand{\Int}{\int\limits}

\begin{lem}\label{lem4}
If $c \in \mathbf{C}$, $a \in \mathbf{Q}_p$, $k \in \mathbf{Z}$ are constants. Then

\begin{equation*}
    \Int_{\{x: |x-a|_p \leq p^{-k}\}} c \hspace{0.5cm} \,dx  = cp^{-k}.
\end{equation*}
\end{lem}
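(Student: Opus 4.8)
The plan is to reduce the statement to the volume computation already carried out in Section~\ref{ss:Haar}. First I would identify the domain of integration with a coset we have already met. By the definition of $|\ |_p$, the condition $|x-a|_p \le p^{-k}$ is equivalent to $\ord_p(x-a) \ge k$, i.e.\ to $x - a \in p^k\Z_p$; hence $\{x : |x-a|_p \le p^{-k}\} = a + p^k\Z_p$. This is the closed ball of radius $p^{-k}$ centered at $a$, and Section~\ref{ss:Haar} records that it is measurable with $\on{Vol}(a+p^k\Z_p) = p^{-k}$ for every $k \in \Z$ (the argument there, via translation invariance and counting cosets, handles $k \ge 0$ and $k < 0$ alike).

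Next I would invoke linearity of the integral in the integrand. The function being integrated is $c\,\mathbbm{1}_{a+p^k\Z_p}$, a single scalar multiple of the characteristic function of a ball, so it is a step function of the form \eqref{eq:stepFunction} with one term, and by the definition of the integral on such functions its integral is $c\,\on{Vol}(a+p^k\Z_p) = c\,p^{-k}$. If one prefers to keep the integral real-valued throughout, decompose $c = \Re(c) + i\,\Im(c)$ and apply this to the two real step functions separately; the outcome is the same. This gives the claimed identity.

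I do not expect a substantive obstacle: the lemma is the volume formula $\on{Vol}(a+p^k\Z_p) = p^{-k}$ repackaged in integral form for convenient reuse in the sub-integral computations below. The one point deserving attention is the bookkeeping of the exponent---confirming that the closed ball of radius $p^{-k}$ is $a + p^k\Z_p$ rather than $a + p^{k\pm 1}\Z_p$---which is precisely the normalization pinned down in Section~\ref{ss:Haar}, where the same ball is also described as the open ball of radius $p^{1-k}$.
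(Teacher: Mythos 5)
Your proposal is correct and follows essentially the same route as the paper's proof: identify the domain of integration as the coset $a+p^k\Z_p$, recall its volume $p^{-k}$ from the Haar-measure discussion, and integrate the constant. The only cosmetic difference is that you correctly call $a+p^k\Z_p$ the \emph{closed} ball of radius $p^{-k}$ (the paper's proof loosely says ``open ball,'' which, per Section~\ref{ss:Haar}, would actually have radius $p^{1-k}$), but since every $p$-adic ball is clopen the set is the same either way and nothing is affected.
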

\begin{proof}
Indeed, $\{x: |x-a|_p \leq p^{-k}\}$ is precisely the open ball 
$a + p^k \Z_p.$ As discussed in the Haar measure section, this ball 
has measure $p^{-k}.$ And, as with Lebesgue measure the integral of 
a constant function over a measurable set is simply the constant times the 
measure of the set. 
\end{proof}

\begin{lem}[Change of Variables]\label{lem2}
Take $v \in \Q_p^\times,$ and $c \in \Q_p,$ and let $f: \Q_p \to \C$ be an 
integrable function. Then the functions $g,h$ defined by 
$g(x) = f(vx)$ and $h(x) = f(x+c)$ are also integrable, and their integrals
are given by 
$$
\int_{\Q_p} f(vx) \, dx = |v|_p^{-1} \int_{\Q_p} f(z) \, dz, 
$$
and 
$$
\int_{\Q_p} f(x+c) \, dx = \int_{\Q_p}f(y) \, dy.
$$
This is expressed by saying that if $z=vx$ then $dz = |v|_p \, dx$ and 
$dy = dx.$ 
\end{lem}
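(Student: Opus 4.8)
The plan is to prove both identities first for $f$ equal to the characteristic function of a single ball $b + p^k \Z_p$, where everything can be checked directly from the volume computations of Section~\ref{ss:Haar}, then to extend to step functions by linearity, and finally to a general integrable $f$ by the standard approximation argument. This matches the way the $\Q_p$-integral was set up in Section~\ref{ss:Haar}, where step functions of the form \eqref{eq:stepFunction} are the building blocks.

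The translation identity is essentially immediate. The map $\varphi(x) = x + c$ sends each coset $a + p^k \Z_p$ to the coset $(a + c) + p^k \Z_p$ of the \emph{same} size, so it is exactly of the measure-preserving type described in Section~\ref{ss:Haar}; applying the displayed consequence there with $h = f$ gives $\int_{\Q_p} f(x + c)\, dx = \int_{\Q_p} f(y)\, dy$ at once, and applying it with $h = |f|$ shows that the function $x \mapsto f(x+c)$ is again integrable.

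The dilation identity requires a short extra computation, because $x \mapsto vx$ is \emph{not} of the above type unless $|v|_p = 1$. Set $m = \ord_p(v)$, so $|v|_p = p^{-m}$ and $v^{-1} p^k \Z_p = p^{k-m}\Z_p$. If $f = \mathbbm{1}_{b + p^k \Z_p}$, then $f(vx) = 1$ precisely when $x \in v^{-1} b + p^{k-m}\Z_p$, so $x \mapsto f(vx)$ is once more the characteristic function of a ball; by Lemma~\ref{lem4} its integral is $p^{-(k-m)} = p^m \cdot p^{-k} = |v|_p^{-1} \int_{\Q_p} f(z)\, dz$, which is the asserted formula in this case. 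Linearity then gives it for every step function \eqref{eq:stepFunction}.

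It remains to pass from step functions to an arbitrary integrable $f$. For $f \ge 0$ choose step functions $0 \le s_n \uparrow f$; then $0 \le s_n(v\,\cdot) \uparrow f(v\,\cdot)$ and $0 \le s_n(\,\cdot + c) \uparrow f(\,\cdot + c)$, so the monotone convergence theorem, together with the step-function case, simultaneously yields the integrability of $g$ (resp.\ $h$) and the desired equality of integrals; the general complex-valued case follows by decomposing $f$ into positive, negative, real, and imaginary parts. The only step needing any care is this final limiting argument, and it is purely formal given the definition of the $\Q_p$-integral; the genuinely $p$-adic content is the one-line observation that $v^{-1}(b + p^k \Z_p)$ is still a ball, with volume scaled by exactly $|v|_p^{-1}$.
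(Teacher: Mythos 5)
Your proof is correct and follows essentially the same route as the paper's: reduce by linearity to characteristic functions of balls $a + p^k\Z_p$, observe that translation by $c$ and dilation by $v$ each carry such a ball to another ball (of the same radius, resp.\ of radius scaled by $|v|_p^{-1}$), and read off the volumes. You spell out the final limiting argument (monotone convergence plus decomposition into real/imaginary and positive/negative parts) in more detail than the paper, which simply invokes the fact that the integral is defined via step-function approximation, but the substance is identical.
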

\begin{proof}
    The integral of an arbitrary measurable function is defined by approximating it with step functions as in \eqref{eq:stepFunction}, so it suffices to treat the 
    case when $f$ is a step function. Since both sides of each formula are linear in 
    $f$ it suffices to treat the case when $f$ is the characteristic function 
    $\mathbbm{1}_{a+p^k \Z_p}$ for some $a,k.$ Clearly, $x+c \in a+p^k \Z_p$ if and only if 
    $x \in a-c+p^k \Z_p.$ Then 
    $$
\int_{\Q_p} f(x+c) \, dx =
\int_{\Q_p} \mathbbm{1}_{a-c+p^k} = \on{Vol}(a-c+p^k)=p^{-k},$$
and 
$$
\int_{\Q_p}f(y) \, dy = \int_{\Q_p} \mathbbm{1}_{a+p^k} = \on{Vol}( a+p^k) = p^{-k}.
$$
Notice that this is just the invariance property of the Haar measure. 
Similarly, $vx \in a+p^k\Z_p$ if and only if $x \in v^{-1} a + p^{k-\ord(v)} \Z_p.$ 
Thus 
$$\int_{\Q_p} f(vx) \, dx= \on{Vol}(v^{-1} a + p^{k-\ord(v)} \Z_p)
= p^{\ord(v) -k} = |v|_p^{-1} \int_{\Q_p}f(z) \, dz.
$$
\end{proof}

\begin{lem}\label{lem:char(a)}
    $$
    \Int_{\Z_p} \psi( ax) \, dx = \mathbbm{1}_{\Z_p}(a).$$
\end{lem}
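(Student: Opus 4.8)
The plan is to verify the identity separately in the two cases $a \in \Z_p$ and $a \notin \Z_p$ that correspond to the two values of the right-hand side $\mathbbm{1}_{\Z_p}(a)$.

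First, if $a \in \Z_p$, then for every $x \in \Z_p$ the product $ax$ lies in $\Z_p$, so $\psi(ax) = 1$ by our standing hypothesis that $\psi$ is trivial on $\Z_p$. The integrand is then the constant function $1$ on $\Z_p$, and since $\on{Vol}(\Z_p) = 1$ (Lemma \ref{lem4} with $k = 0$) the integral equals $1 = \mathbbm{1}_{\Z_p}(a)$.

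Second, suppose $a \notin \Z_p$, so that $\mathbbm{1}_{\Z_p}(a) = 0$ and we must show the integral vanishes. First I would observe that $x \mapsto \psi(ax)$ is a \emph{nontrivial} character of the additive group $\Z_p$: since $a \notin \Z_p$ we have $\ord_p(a) \le -1$, hence $a\Z_p = p^{\ord_p(a)}\Z_p \supseteq p^{-1}\Z_p$, and $\psi$ is nontrivial on $p^{-1}\Z_p$ by hypothesis, so there exists $x_0 \in \Z_p$ with $\psi(a x_0) \ne 1$. Then I would invoke translation invariance of the Haar measure. Set $f = \psi(a\,\cdot)\,\mathbbm{1}_{\Z_p}$, which is an integrable function on $\Q_p$; since $x_0 \in \Z_p$ and $\Z_p$ is a subgroup, and $\psi$ is a homomorphism, we have $f(x+x_0) = \psi(ax_0)\,\psi(ax)\,\mathbbm{1}_{\Z_p}(x)$, so Lemma \ref{lem2} gives
\[
\int_{\Z_p}\psi(ax)\,dx \;=\; \int_{\Q_p} f(x)\,dx \;=\; \int_{\Q_p} f(x+x_0)\,dx \;=\; \psi(ax_0)\int_{\Z_p}\psi(ax)\,dx .
\]
Hence $\bigl(1-\psi(ax_0)\bigr)\int_{\Z_p}\psi(ax)\,dx = 0$, and since $\psi(ax_0) \ne 1$ the integral is $0$, which finishes this case.

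The hard part will be essentially nothing: the only step that genuinely uses the standing hypothesis on $\psi$ is the verification that $x \mapsto \psi(ax)$ is nontrivial on $\Z_p$ when $a \notin \Z_p$, via the containment $a\Z_p \supseteq p^{-1}\Z_p$, and one must be a little careful to apply Lemma \ref{lem2} to $f = \psi(a\,\cdot)\mathbbm{1}_{\Z_p}$ rather than to $\psi(a\,\cdot)$ alone. As an alternative to the translation trick, one could instead partition $\Z_p$ into the $p^{m}$ cosets of $p^{m}\Z_p$ with $m = -\ord_p(a) \ge 1$; on each coset $c + p^m\Z_p$ the integrand has the constant value $\psi(ac)$ (because $\ord_p(at) \ge 0$ for $t \in p^m\Z_p$), so the integral of $\psi(ax)$ over $\Z_p$ equals $p^{-m}\sum_{c} \psi(ac)$ where $c$ runs over representatives of $\Z_p/p^m\Z_p \cong \Z/p^m\Z$, and this sum vanishes by orthogonality because $c \mapsto \psi(ac)$ is a nontrivial character of the finite group $\Z/p^m\Z$.
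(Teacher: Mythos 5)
Your proof is correct and takes essentially the same approach as the paper: the same two-case split, with the translation-invariance trick (choosing $x_0 \in \Z_p$ with $\psi(ax_0) \ne 1$) handling the case $a \notin \Z_p$. You are slightly more explicit than the paper in justifying why $\psi(a\,\cdot)$ is nontrivial on $\Z_p$ via the hypothesis on $p^{-1}\Z_p$, and the alternative coset/orthogonality argument you sketch at the end is a reasonable variant, but the core argument matches.
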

    This lemma is quite well-known, but the proof is short and 
    nice, so we include it. 
\begin{proof}
If $a \in \Z_p$ then 
$ax \in \Z_p$ for all $x.$ But then $\psi(ax) = 1$ for all $x,$
and we just get the measure of $\Z_p,$ which is $1.$ 
If $a \notin \Z_p$ then $\psi$ is not trivial on 
$a\Z_p.$ Choose $x_0$ such that $\psi(ax_0) \ne 1.$ Then 
$$
\Int_{\Z_p} \psi( ax) \, dx
= \Int_{\Z_p} \psi( a (x+x_0)) \, dx 
= \psi(ax_0) \Int_{\Z_p} \psi( ax) \, dx,
$$
which forces the integral to be $0.$
\end{proof}

\begin{lem}\label{lem3}
Let $\mathbbm{1}_{\mathbf{Z}_p}(t)$ be a characteristic function that is 1, if $t \in \mathbf{Z}_p$ or 0 if $t \notin \mathbf{Z}_p$. Then

\begin{equation*}
    \Int_{\mathbf{Z}_p^*} \psi(-xt) \,dx = \mathbbm{1}_{\mathbf{Z}_p}(t) - p^{-1}\mathbbm{1}_{\mathbf{Z}_p}(pt).
\end{equation*}
\end{lem}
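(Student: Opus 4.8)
The plan is to split the domain of integration and reduce everything to Lemma~\ref{lem:char(a)}. Since $\mathbf{Z}_p^* = \mathbf{Z}_p \setminus p\mathbf{Z}_p$ and $p\mathbf{Z}_p \subseteq \mathbf{Z}_p$, additivity of the integral over disjoint measurable sets gives
$$\Int_{\mathbf{Z}_p^*} \psi(-xt)\,dx = \Int_{\mathbf{Z}_p}\psi(-xt)\,dx - \Int_{p\mathbf{Z}_p}\psi(-xt)\,dx.$$
I would then evaluate the two pieces separately.

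For the first piece, write $\psi(-xt) = \psi((-t)x)$ and apply Lemma~\ref{lem:char(a)} with $a = -t$; since $-t \in \mathbf{Z}_p$ if and only if $t \in \mathbf{Z}_p$, this gives $\Int_{\mathbf{Z}_p}\psi(-xt)\,dx = \mathbbm{1}_{\mathbf{Z}_p}(t)$.

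For the second piece, substitute $x = py$. By the change-of-variables formula of Lemma~\ref{lem2} applied with $v = p$ (so $|p|_p = p^{-1}$), as $x$ ranges over $p\mathbf{Z}_p$ the variable $y$ ranges over $\mathbf{Z}_p$ and $dx = p^{-1}\,dy$, whence
$$\Int_{p\mathbf{Z}_p}\psi(-xt)\,dx = p^{-1}\Int_{\mathbf{Z}_p}\psi(-pyt)\,dy = p^{-1}\,\mathbbm{1}_{\mathbf{Z}_p}(pt),$$
the last equality being Lemma~\ref{lem:char(a)} with $a = -pt$. Subtracting the two contributions yields $\Int_{\mathbf{Z}_p^*}\psi(-xt)\,dx = \mathbbm{1}_{\mathbf{Z}_p}(t) - p^{-1}\mathbbm{1}_{\mathbf{Z}_p}(pt)$, as claimed.

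There is no real obstacle here; the only point needing a little care is the bookkeeping in the change of variables — namely that restricting to $p\mathbf{Z}_p$ becomes an integral over all of $\mathbf{Z}_p$ after rescaling, and that the resulting Jacobian factor is $|p|_p = p^{-1}$ rather than $p$. One could alternatively avoid Lemma~\ref{lem2} by decomposing $\mathbf{Z}_p$ into its $p$ cosets modulo $p\mathbf{Z}_p$ and observing that $\psi(-xt)$ is constant on each coset when $t \in p^{-1}\mathbf{Z}_p$ and otherwise sums to zero across the cosets; but the substitution argument is shorter and fits the style of the preceding lemmas.
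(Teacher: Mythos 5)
Your proof is correct and follows essentially the same route as the paper: split $\mathbf{Z}_p^\ast = \mathbf{Z}_p\setminus p\mathbf{Z}_p$, apply Lemma~\ref{lem:char(a)} to the integral over $\mathbf{Z}_p$, and rescale by $p$ via Lemma~\ref{lem2} to handle the integral over $p\mathbf{Z}_p$. In fact your bookkeeping is slightly cleaner than the paper's, which writes the substitution as ``$y=px$'' when it means $x=py$ (as you have it).
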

\begin{proof}
    Express the left hand side as the 
    integral over $\Z_p$ minus the integral over $p\Z_p.$ Apply lemma \ref{lem:char(a)} to 
    the first integral. In the second integral, use lemma \ref{lem2} to substitute $y=px.$ As $x$ ranges over 
    $p\Z_p,$ $y$ ranges over $\Z_p,$ and so lemma \ref{lem:char(a)} can be applied again.
\end{proof}
\begin{rmk}
    We have included the minus sign in lemma \ref{lem3} because a minus sign is present 
    in some of the applications of this lemma which appear below. But the 
    same formula holds true if the minus sign is omitted, since $\mathbbm{1}_{\mathbf{Z}_p}$ is an even function.
\end{rmk}

\begin{cor} \label{cor1}
Let $a,b \in \mathbf{Z}$, $c \in \Q_p$, $u \in \mathbf{Q}_p - \mathbf{Z}_p$ and $u = \mu p^U$. Then

\begin{equation*}
\begin{aligned}
    &\Int_{\mathbf{Q}_p-\mathbf{Z}_p} |u|_p^{as + b} \psi(cu) \,du =  \qquad \sum_{U=-\infty}^{-1} p^{-U(as+b+1)} \ \Int_{\mathbf{Z}_p^*} \psi(c\mu p^U) \,d\mu\\
    & \begin{cases}
        \frac{p^{as+b}}{1-p^{as+b}}\left[
        (1-|c|_p^{-as-b-1})-p^{-1}(1-|c|_p^{-as-b-1}p^{as+b+1})
        \right], & c \in \Z_p, \\ 0, & c \notin \Z_p.
    \end{cases}
 \end{aligned}   
\end{equation*}
\end{cor}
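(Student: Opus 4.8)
The strategy is to slice $\mathbf{Q}_p-\mathbf{Z}_p$ into the ``spheres'' $p^U\mathbf{Z}_p^{\times}$ for $U\le -1$, evaluate the integral over each sphere by a dilation (Lemma \ref{lem2}) followed by Lemma \ref{lem3}, and then sum the resulting series.

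First I would write $\mathbf{Q}_p-\mathbf{Z}_p=\bigsqcup_{U=-\infty}^{-1}\{u:\ord_p u=U\}$ and identify $\{u:\ord_p u=U\}=p^U\mathbf{Z}_p^{\times}$, on which $|u|_p=p^{-U}$ is constant. Writing $u=\mu p^U$ with $\mu\in\mathbf{Z}_p^{\times}$ and applying the change-of-variables lemma (the Jacobian is $|p^U|_p=p^{-U}$), each sphere contributes
$$\Int_{p^U\mathbf{Z}_p^{\times}}|u|_p^{as+b}\psi(cu)\,du=p^{-U(as+b)}\,p^{-U}\Int_{\mathbf{Z}_p^{\times}}\psi(c\mu p^U)\,d\mu=p^{-U(as+b+1)}\Int_{\mathbf{Z}_p^{\times}}\psi(c\mu p^U)\,d\mu,$$
and summing over $U\le -1$ produces the first displayed identity of the corollary. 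In each of the cases below the resulting sum over $U$ will in fact be a \emph{finite} sum, so no question of convergence arises.

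Next I would evaluate the inner integral with Lemma \ref{lem3}, taking the dummy variable to be $\mu$ and the parameter to be $t=cp^U$ (the sign being immaterial by the remark following that lemma): $\int_{\mathbf{Z}_p^{\times}}\psi(c\mu p^U)\,d\mu=\mathbbm{1}_{\mathbf{Z}_p}(cp^U)-p^{-1}\mathbbm{1}_{\mathbf{Z}_p}(cp^{U+1})$. Since $cp^U\in\mathbf{Z}_p$ exactly when $U\ge-\ord_p c$, the evaluation splits on $c$. If $c\notin\mathbf{Z}_p$ then $\ord_p c\le -1$, so for every $U\le -1$ both indicator terms vanish and the sum is $0$; this is the second case. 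If $c\in\mathbf{Z}_p$, put $\gamma=\ord_p c\ge 0$; then the two indicators are supported on $-\gamma\le U\le -1$ and on $-\gamma-1\le U\le -1$ respectively, so the sum becomes a difference of two finite geometric progressions with common ratio $r=p^{as+b+1}$. Re-indexing by $k=-U$, summing, and rewriting $r^{\gamma}=p^{\gamma(as+b+1)}=|c|_p^{-as-b-1}$ (so that $r^{\gamma+1}=|c|_p^{-as-b-1}p^{as+b+1}$) collapses the expression into the claimed closed form.

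The only real obstacle is the bookkeeping in this last step: pinning down the two summation ranges exactly, factoring the geometric sums cleanly, and --- the point most likely to cause an off-by-one --- tracking the exponent of the geometric ratio, which is $p^{as+b+1}$ (the extra $+1$ being precisely the Jacobian $p^{-U}$ contributed by Lemma \ref{lem2}) and the resulting prefactor $\dfrac{p^{as+b+1}}{1-p^{as+b+1}}$. Everything else is a direct application of Lemmas \ref{lem2} and \ref{lem3}.
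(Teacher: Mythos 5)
Your proposal matches the paper's own argument step for step: decompose $\Q_p - \Z_p$ into the shells $p^U \Z_p^\times$ for $U\le -1$, change variables by Lemma \ref{lem2} to pull out the Jacobian $p^{-U}$, evaluate the inner integral by Lemma \ref{lem3}, and then sum two finite geometric progressions with ratio $p^{as+b+1}$. One small point worth noting: you conclude (correctly) that the prefactor is $\tfrac{p^{as+b+1}}{1-p^{as+b+1}}$, and indeed the paper's own intermediate display has denominator $1-p^{as+b+1}$ and numerator factor $p^{as+b+1}$, so the $\tfrac{p^{as+b}}{1-p^{as+b}}$ in the statement of Corollary \ref{cor1} appears to be a typo in the paper rather than an error on your part.
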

\begin{proof}
    The first equation holds because $\Q_p - \Z_p$ is the countable disjoint union of the 
    sets $p^U\Z_p^*,$ for $U$ from $-1$ to $-\infty.$ We plug in $u= p^U \mu$ and use 
    lemma \ref{lem2}. Each of the $\mu$ integrals can then be computed using lemma 
    \ref{lem3}, yielding 
    $$
    \sum_{U=-\infty}^{-1} p^{-U(as+b+1)} 
    (\mathbbm{1}_{\Z_p} (cp^{U}) -p^{-1} \mathbbm{1}_{\Z_p} (cp^{U+1})).
    $$
    Since $cp^U$ (respectively, $cp^{U+1}$) is in $\Z_p$ if and only 
    if $U \ge -\ord(c)$ (respectively $-\ord(c) - 1$), we obtain 
    $$
    \sum_{U = -\ord(c)}^{-1} p^{-U(as+b+1)}  - p^{-1} \sum_{U = -\ord-1}^{-1} p^{-U(as+b+1)}.
    $$
    Summing these finite geometric series yields
    $$
    \frac{p^{(as+b+1)}- p^{(as+b+1)(\ord(c)+1)}}{1-p^{as+b+1}}
    - p^{-1}  \frac{p^{(as+b+1)}- p^{(as+b+1)(\ord(c)+2)}}{1-p^{as+b+1}}.
    $$
    Plugging in $|c|_p = p^{-\ord(c)}$ and simplifying gives the second identity.
\end{proof}

\begin{lem}\label{lem1}
For $s \in \mathbf{C}$, if $a \in \mathbf{Z}_p$, then

\begin{equation*}
    \Int_{\mathbf{Q}_p}f_s^\circ(n^-(-x))\psi(ax) \,dx = \frac{1-p^{-3s}}{1-p^{1-3s}}(1-|a|_p^{3s-1}p^{1-3s}). 
\end{equation*}

Otherwise, the integral vanishes.
\end{lem}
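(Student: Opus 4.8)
The plan is to compute the integrand $f_s^\circ(n^-(-x,0,0,0,0))$ explicitly, split the region of integration at $|x|_p=1$, and recognize the two resulting $p$-adic integrals as instances of Lemma~\ref{lem:char(a)} and Corollary~\ref{cor1}. For the integrand: if $x\in\mathbf{Z}_p$ then $n^-(-x,0,0,0,0)\in N^-(\mathbf{Z}_p)\subseteq G_2(\mathbf{Z}_p)$, so the defining property $f_{s,p}^\circ(nm(g)k)=|\det g|_p^s$ forces $f_s^\circ(n^-(-x,0,0,0,0))=1$. If $x\notin\mathbf{Z}_p$, I would apply the $SL_2$-Iwasawa identity \eqref{SL2Iwasawa} (with $v=x$), lifted to $G_2$ via the homomorphism $\varphi_\gamma\colon SL_2\to G_2$ attached to the relevant root, exactly as in the derivation of the identity for $x_{-\alpha-3\beta}(v)$ and of $I_-$ recalled above: since $x^{-1}\in p\mathbf{Z}_p$, the rotation factor $\varphi_\gamma\!\left(\begin{smallmatrix}0&1\\-1&x^{-1}\end{smallmatrix}\right)$ lands in $G_2(\mathbf{Z}_p)$, the unipotent factor lands in $N(\mathbf{Q}_p)$, and the diagonal factor contributes $|x|_p^{-3s}$ to $f_{s,p}^\circ$ — this is the $f_{s,p}^\circ$-contribution responsible for the $|v|^{-3s}$ appearing (together with a Jacobian $|v|$) as the factor $|v|^{-3s+1}$ in $I_-$. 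Thus $f_s^\circ(n^-(-x))$ equals $1$ on $\mathbf{Z}_p$ and $|x|_p^{-3s}$ on $\mathbf{Q}_p-\mathbf{Z}_p$.

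With the integrand in hand, split at $|x|_p=1$:
\[
\int_{\mathbf{Q}_p}f_s^\circ(n^-(-x))\,\psi(ax)\,dx=\int_{\mathbf{Z}_p}\psi(ax)\,dx+\int_{\mathbf{Q}_p-\mathbf{Z}_p}|x|_p^{-3s}\,\psi(ax)\,dx .
\]
By Lemma~\ref{lem:char(a)} the first integral equals $\mathbbm{1}_{\mathbf{Z}_p}(a)$. For the second, I would apply Corollary~\ref{cor1} with the exponent ``$as+b$'' of that statement specialized to $-3s$ and with its character parameter ``$c$'' taken to be our $a$: this gives $0$ when $a\notin\mathbf{Z}_p$ and an explicit rational function of $p^{\pm 3s}$ and $|a|_p$ when $a\in\mathbf{Z}_p$. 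When $a\notin\mathbf{Z}_p$ both pieces vanish, which is the ``otherwise'' clause. When $a\in\mathbf{Z}_p$, I would add $\mathbbm{1}_{\mathbf{Z}_p}(a)=1$ to the output of Corollary~\ref{cor1}, put everything over the common denominator $1-p^{1-3s}$, and simplify; the numerator collapses to $(1-p^{-3s})(1-|a|_p^{3s-1}p^{1-3s})$, giving the asserted value $\frac{1-p^{-3s}}{1-p^{1-3s}}(1-|a|_p^{3s-1}p^{1-3s})$.

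The main obstacle is the first step, the integrand computation: making the Iwasawa decomposition of $n^-(-x,0,0,0,0)$ fully explicit for $|x|_p>1$ and confirming that its Levi factor contributes the exponent $-3s$ (equivalently, that $f_{s,p}^\circ$ restricted to this one-parameter subgroup is $x\mapsto\max(1,|x|_p)^{-3s}$). Once that is pinned down, everything after it — the splitting, the two reductions to Lemma~\ref{lem:char(a)} and Corollary~\ref{cor1}, and the closing algebraic simplification — is routine bookkeeping.
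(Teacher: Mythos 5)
Your proposal is correct and follows the paper's own proof step for step: same decomposition of $\mathbf{Q}_p$ into $\mathbf{Z}_p$ and $\mathbf{Q}_p-\mathbf{Z}_p$, same computation of the integrand (trivial on $\mathbf{Z}_p$, equal to $|x|_p^{-3s}$ off $\mathbf{Z}_p$ via the $SL_2$-Iwasawa identity \eqref{SL2Iwasawa} lifted through $\varphi_\gamma$), same reduction to Lemma~\ref{lem:char(a)} and Corollary~\ref{cor1} with exactly the specialization you name, and same closing algebra. The paper likewise leaves the explicit $G_2$-Iwasawa decomposition implicit, so the ``main obstacle'' you flag is not filled in more fully there either.
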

\begin{proof}
We split the domain of integration into two pieces:
$\Z_p$ and $\Q_p-\Z_p.$ If $x \in \Z_p$
then $n^-(-x) \in G_2(\Z_p)$ and 
hence $f_s^\circ(n^-(-x))=1.$ By Lemma \ref{lem:char(a)}, 
we get 
$$\Int_{\mathbf{Z}_p}f_s^\circ(n^-(-x))\psi(ax) \,dx=
\mathbbm{1}_{\Z_p})(a) \, dx.$$
If 
$|x|_p > 1$ then we use the identity \eqref{SL2Iwasawa}
and an embedding $SL_2 \to G_2$ to check that 
 $f_s^\circ(n^-(-x)) = |x|_p^{-3s}.$ We can then 
 apply corollary \ref{cor1} to compute this part. 
 
Combining the two parts gives
\begin{equation*}
    \Int_{\mathbf{Q}_p}f_s(n^-(-x))\psi(ax) \,dx = \frac{p^{-(3s-1)}-p^{-(3s-1)}|a|^{3s-1}}{1-p^{-(3s-1)}} - p^{-3s}|a|^{3s-1} + 1. 
\end{equation*}

Now we find a common denominator, expand and simplify the expression to have

\begin{equation*}
    \Int_{\mathbf{Q}_p}f_s(n^-(-x))\psi(ax) \,dx = \frac{1-p^{-(3s-1)}-p^{-3s}|a|^{3s-1} + p^{-(6s-1)}|a|^{3s-1}+ p^{-(3s-1)}-p^{-(3s-1)}|a|^{3s-1}}{1-p^{-(3s-1)}}
\end{equation*}

\begin{equation*}
    = \frac{1-p^{-3s}|a|^{3s-1}+p^{-(6s-1)}|a|^{3s-1} - p^{-(3s-1)}|a|^{3s-1}}{1-p^{-(3s-1)}} = \frac{1-p^{-3s}}{1-p^{1-3s}}(1-|a|_p^{3s-1}p^{1-3s}). 
\end{equation*}
\end{proof}

\begin{lem}\label{lem5}
Let $h\in \Z[r,y,u]$ be a polynomial in three variables
with coefficients in $\Z_p,$ and for any negative integer
$V,$ let 
$S(V) =\{(r,y,u) \in (\frac{\mathbf{Z}}{p^{-V}\mathbf{Z}}) \times (\frac{\mathbf{Z}}{p^{-V}\mathbf{Z}}) \times (\frac{\mathbf{Z}}{p^{-V}\mathbf{Z}}) | h(r,y,u) = 0 \}$. Then, for each $V,$ reduction mod $p^{-V}$ gives a well-defined function $\rho : S(V-1) \rightarrow S(V)$.
\end{lem}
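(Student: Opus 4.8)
The plan is to realize $\rho$ as the restriction to $S(V-1)$ of the coordinatewise ring-quotient map, and then to check that this restriction actually lands in $S(V)$. Write $m = -V$, so that $m$ is a positive integer, $S(V-1)$ consists of triples in $(\Z/p^{m+1}\Z)^3$, and $S(V)$ of triples in $(\Z/p^m\Z)^3$. Since $p^{m+1}\Z_p \subseteq p^m\Z_p$, the reduction map $\Z_p \to \Z/p^m\Z$ kills $p^{m+1}\Z_p$ and hence factors through $\Z_p/p^{m+1}\Z_p = \Z/p^{m+1}\Z$, giving a well-defined surjective ring homomorphism $q : \Z/p^{m+1}\Z \to \Z/p^m\Z$. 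I would set $\rho(r,y,u) = (q(r), q(y), q(u))$; this is visibly well-defined as a map $(\Z/p^{m+1}\Z)^3 \to (\Z/p^m\Z)^3$, so the only thing to verify is that it carries $S(V-1)$ into $S(V)$. It is worth noting here that the hypothesis that $h$ has coefficients in $\Z_p$ (rather than merely in $\Q_p$) is exactly what makes the reductions mod $p^m$ of these coefficients meaningful.

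For the verification, the key point is that polynomial evaluation commutes with ring homomorphisms. First one should pin down what ``$h(r,y,u) = 0$'' means on residue classes: letting $h_{m+1} \in (\Z/p^{m+1}\Z)[r,y,u]$ be obtained from $h$ by reducing each coefficient mod $p^{m+1}$, the condition $(r,y,u) \in S(V-1)$ means $h_{m+1}(r,y,u) = 0$ in $\Z/p^{m+1}\Z$; similarly $(r',y',u') \in S(V)$ means $h_m(r',y',u') = 0$ in $\Z/p^m\Z$, where $h_m$ is obtained from $h$ by reducing coefficients mod $p^m$. Because $q$ is a ring homomorphism, applying $q$ to the coefficients of $h_{m+1}$ returns precisely $h_m$ — the reduction $\Z_p \to \Z/p^m\Z$ agrees with the composite of $\Z_p \to \Z/p^{m+1}\Z$ with $q$ — and applying $q$ to an evaluated polynomial equals evaluating the $q$-reduced polynomial at the $q$-images of the arguments. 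Hence
\[
q\bigl(h_{m+1}(r,y,u)\bigr) = h_m\bigl(q(r), q(y), q(u)\bigr).
\]

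Putting these together: if $(r,y,u) \in S(V-1)$ then $h_{m+1}(r,y,u) = 0$, so $h_m(q(r),q(y),q(u)) = q(0) = 0$, which says exactly that $\rho(r,y,u) = (q(r),q(y),q(u)) \in S(V)$. Thus $\rho$ restricts to a well-defined function $S(V-1) \to S(V)$, as claimed. There is no serious obstacle here; the only points demanding care are the bookkeeping around what ``$h = 0$'' means on residue classes and the observation that the two layered coefficient reductions $\Z_p \to \Z/p^{m+1}\Z \to \Z/p^m\Z$ are consistent — i.e.\ that polynomial evaluation is compatible with this tower of quotient rings.
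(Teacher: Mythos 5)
Your proof is correct and takes essentially the same approach as the paper's: the paper's one-line argument (equivalence mod $p^{-V+1}$ implies equivalence mod $p^{-V}$, and $h \equiv 0$ mod $p^{-V+1}$ implies $h \equiv 0$ mod $p^{-V}$) is exactly the content you unpack via the quotient map $q : \Z/p^{m+1}\Z \to \Z/p^m\Z$ and the compatibility of polynomial evaluation with ring homomorphisms. Yours is merely a more explicit and careful writeup of the same observation.
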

\begin{proof}
    Indeed, if $(r_1,y_1, u_1)$ and $(r_2, y_2, u_2)$ 
    are equivalent modulo $-V+1,$ then they are
    equivalent modulo $-V,$ and if 
    $h(r,y,u)$ is $0$ modulo $-V+1,$ then it is 
    zero modulo $V.$
\end{proof}
\begin{rmk}
    In some of our applications one or more of the 
    variables is restricted to $(\Z/p^{-V}\Z)^*.$
    The lemma remains true in this context with the 
    same proof, since an element of $\Z/p^{-V+1}\Z$ 
    is a unit if and only if its image in $\Z/p^{-V}\Z$
    is.
\end{rmk}

\begin{lem}
\label{lem6}
Let $h, V$ and $S(V)$ be as in lemma \ref{lem5}.
 Let $ \overline{h}$ be the 
 image of $(h_{r}(t),h_{y}(t),h_{u}(t))$
 in $(\Z/p\Z)^3,$ and suppose that it 
 is not zero.
  Here $h_r$ is the partial derivative of $h$ with respect
 to the variable $r$ and so on.
 Then, for all $t \in S(V),$  $$ \# \{a \in S(V-1) | \rho(a) = t \} = p^2.$$
\end{lem}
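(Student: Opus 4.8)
The plan is to treat this as a standard Hensel-type lifting count. Write $m = -V$, so that $m \ge 1$ by hypothesis, $S(V) \subseteq (\Z/p^m\Z)^3$, $S(V-1) \subseteq (\Z/p^{m+1}\Z)^3$, and $\rho$ is reduction mod $p^m$. Fix $t \in S(V)$ and choose a representative $\tilde t \in \Z_p^3$ (equivalently in $(\Z/p^{m+1}\Z)^3$) reducing to $t$. Every element of $(\Z/p^{m+1}\Z)^3$ lying over $t$ is uniquely of the form $\tilde t + p^m(a,b,c)$ with $(a,b,c) \in (\Z/p\Z)^3$, so there are $p^3$ candidate lifts, and the job is to count those satisfying $h \equiv 0 \pmod{p^{m+1}}$.

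The key computation is the first-order Taylor expansion around $\tilde t$, which is exact since $h$ is a polynomial. Every term of order $\ge 2$ in the displacement $p^m(a,b,c)$ carries a factor $p^{2m}$, and $2m \ge m+1$ precisely because $m \ge 1$; hence
\[
h\big(\tilde t + p^m(a,b,c)\big) \equiv h(\tilde t) + p^m\big(a\,h_r(\tilde t) + b\,h_y(\tilde t) + c\,h_u(\tilde t)\big) \pmod{p^{m+1}}.
\]
Since $t \in S(V)$ and $\tilde t \equiv t \pmod{p^m}$, we may write $h(\tilde t) = p^m\mu$ with $\mu \in \Z_p$. Dividing through by $p^m$, the condition that the lift lie in $S(V-1)$ becomes the single affine-linear congruence
\[
\mu + a\,h_r(\tilde t) + b\,h_y(\tilde t) + c\,h_u(\tilde t) \equiv 0 \pmod{p}
\]
in $(a,b,c) \in (\Z/p\Z)^3$. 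The coefficient vector $(h_r(\tilde t), h_y(\tilde t), h_u(\tilde t)) \bmod p$ depends only on $t \bmod p$, so it is exactly the vector $\overline{h}$ of the statement.

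By hypothesis $\overline{h} \ne 0$ in $(\Z/p\Z)^3$, so at least one coefficient is a unit; the congruence therefore defines a coset of a $2$-dimensional $\F_p$-subspace of $(\Z/p\Z)^3$, which has $p^2$ elements. Each solution $(a,b,c)$ gives exactly one element of $S(V-1)$ over $t$, so $\#\{a \in S(V-1) : \rho(a)=t\} = p^2$, as claimed.

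I do not expect a genuine obstacle here. The two points to watch are: (i) confirming that the higher-order Taylor terms vanish modulo $p^{m+1}$, which is exactly where the hypothesis $V \le -1$ (equivalently $m \ge 1$) is used; and (ii) noting that although $\mu \bmod p$ depends on the chosen representative $\tilde t$, this is irrelevant, since for every value of $\mu$ the affine-linear congruence has exactly $p^2$ solutions as soon as $\overline{h} \ne 0$. The same argument applies verbatim when some of $r,y,u$ are restricted to units, since a lift of a unit mod $p^m$ is again a unit mod $p^{m+1}$.
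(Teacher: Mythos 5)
Your proof is correct and follows essentially the same approach as the paper's: Taylor-expand $h$ at a lift of $t$, observe that the higher-order terms vanish modulo $p^{-V+1}$, and reduce the lifting condition to an affine-linear congruence over $\F_p$ whose solution set has $p^2$ elements because $\overline h \ne 0$. Your write-up is slightly more explicit than the paper's about why the higher-order terms die (the inequality $2m \ge m+1$, i.e.\ $V\le -1$) and about the case where a variable is restricted to units, but the underlying argument is the same.
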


\begin{proof}
Fix $t=(r_0, y_0,u_0) \in S(V).$
The number of $a \in (\Z/p^{-V+1}\Z)^3$
such that $\rho(a) = t$ is $p^3,$
and we can express each element uniquely as 
 $a = (r_0 + p^{-V} \alpha, y_0 + p^{-V} \beta, u_0 + p^{-V} \gamma),$
 with $\alpha, \beta \gamma \in \Z/p\Z.$
 We can plug in a Taylor expansion in each 
 variable, and higher order terms vanish modulo $p^{-2V}.$
 Hence $$h(a)\equiv h(t) + p^{-V} (\alpha h_{r}(t) + \beta h_{y}(t) + \gamma h_{u}(t))\mod p^{-2V}.$$

 Since $h(r_0, y_0,u_0),$ is divisible by $p^{-V},$ 
 we can write it 
 as $-p^{-V}d,$ for some $d \in \Z_p.$
Then 
$$h(a)  \equiv 0 \mod p^{-V+1} \iff (\alpha h_{r}(t) + \beta h_{y}(t) + \gamma h_{u}(t)) \equiv -d \mod p.$$
This expression can be written as a matrix representation;
\begin{equation*}
    (\overline{h})\begin{pmatrix} \alpha \\ \beta \\ \gamma \end{pmatrix} = d
\end{equation*}
This is a linear equation in three variables
over the field with $p$ elements. If $d=0,$ 
then its solution is a two dimensional 
subspace of $(\Z/p\Z)^3.$ If not, 
it's a coset of that subspace. In either 
case, it has $p^2$ elements. 
 \end{proof}

\begin{cor}\label{cor2}
Let $S(V)$ be defined as in lemma \ref{lem6}, using 
a polynomial $h$ such that $\overline h,$
defined as in lemma \ref{lem6} is nonzero. 
Let $N(V)$ be the number of elements in $S(V).$ Then 
$N(V-1) = p^2N(V)$. 
\end{cor}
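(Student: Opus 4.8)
The plan is to deduce the corollary directly from Lemma~\ref{lem6} by a fiber-counting argument, with essentially no new content beyond what that lemma already provides. First I would recall from Lemma~\ref{lem5} that reduction modulo $p^{-V}$ gives a well-defined map $\rho : S(V-1) \to S(V)$; note that the hypothesis on $\overline h$ is not needed for this step, and since it is the same polynomial $h$ that defines $S(V)$ at every level, the hypothesis of Lemma~\ref{lem6} is inherited automatically at each $V$. The point is then simply that $S(V-1)$ is the disjoint union of the fibers $\rho^{-1}(t)$ as $t$ ranges over $S(V)$, so that
$$
N(V-1) = \sum_{t \in S(V)} \#\,\rho^{-1}(t).
$$

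Next I would invoke Lemma~\ref{lem6}, which asserts precisely that $\#\,\rho^{-1}(t) = p^2$ for every $t \in S(V)$. Substituting this into the displayed sum gives
$$
N(V-1) = \sum_{t \in S(V)} p^2 = p^2 \cdot \#\,S(V) = p^2 N(V),
$$
which is the claimed identity. So the whole proof is the observation that a map all of whose fibers have the same cardinality $p^2$ multiplies cardinalities by $p^2$.

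The only point requiring a word of care is the degenerate case $S(V) = \emptyset$: then the sum above is empty, so $N(V-1) = 0 = p^2 \cdot 0 = p^2 N(V)$, and the identity still holds (indeed $S(V-1)$ is forced to be empty as well, since $\rho$ maps it into $S(V)$). There is no real obstacle here; all the substantive combinatorial work, namely the linear-algebra computation showing each fiber has exactly $p^2$ points, was already carried out in the proof of Lemma~\ref{lem6}, and this corollary merely repackages it as a recursion for $N(V)$.
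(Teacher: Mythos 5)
Your proof is correct and is essentially identical to the paper's: both partition $S(V-1)$ into the fibers of the reduction map $\rho$ and use Lemma~\ref{lem6} to conclude each fiber has exactly $p^2$ elements. Your added remark about the empty case $S(V)=\emptyset$ is a minor extra check that the paper leaves implicit, but there is no substantive difference in approach.
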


\begin{proof}
Notice that by Lemma \ref{lem6}, $\# \{a \in S(V-1) | \rho(a) = t \} = p^2,$ for each element of $S(V).$ So the number of elements in $S(V-1)$ is simply the number of elements in $S(V-1)$ that map to each element in $S(V),$ $p^2$, times the amount of elements in $S(V),$ $N(V).$ Therefore $N(V-1) = p^2N(V)$. 
\end{proof}

\section{Computations}

We compute all 16 pieces of $I^{\sigma}_p(s, f_{s,p}^\circ)$
when $\sigma = (1,0,b,c)$ with $b,c \in \Z_p^\times$ 
such that $g_\sigma(u) = -u^3+bu+c$ is irreducible, and all $p \equiv 5
\mod 6.$ Some of our results do not need all of these conditions 
to be satisfied. Throughout this section, $\sigma$
is fixed, and we denote $g_\sigma$ simply 
as $g.$ 

We remark that the values of $I_{++++}, I_{+++-}, I_{++-+}, I_{++--},
I_{+-+-},I_{+--+},I_{+---},I_{-++-},$ and 
$I_{-+--}$ were already computed by 
Pleso in \cite{pleso2009integrals}. We include them here for the sake
of completeness. The first four may reasonably be attributed to 
Jiang and Rallis in \cite{Jiang-Rallis}, since, as Pleso points out, 
the coefficient $b$, which is zero in Jiang-Rallis and nonzero here, 
does not appear at all in them.

Case 1 ($I_{++++}$)

\begin{equation*}
    \mathbf{I}_{++++} = 
    \Int_{\mathbf{Q}_p}   f_s(n^-(-x)) \psi(x)\,dx.  
\end{equation*}  

Applying Lemma \ref{lem1}, notice that since $|1|_p = 1, 1 \in \mathbf{Z}_p$, so the solution to the integral is as follows:
\begin{equation*}
    \mathbf{I}_{++++} = 
    \Int_{\mathbf{Q}_p}   f_s(n^-(-x)) \psi(x)\,dn  = \frac{1-p^{-3s}}{1-p^{-3s+1}}(1-|1|^{3s-1}p^{-3s+1}) = 1-p^{-3s}.
\end{equation*}

Case 2 ($I_{+++-}$)

\begin{equation*}
    \mathbf{I}_{+++-}  
    = \Int_{\mathbf{Q}_p-\mathbf{Z}_p} |y|_p^{-9s+3} \Int_{\mathbf{Q}_p} f_s(n^-(-x)) \psi(xy^3)\,dx \,dy.
\end{equation*}  

Notice that since $|y|_p > 1$, then $|y^3|_p > 1,$ which implies that $y^3 \notin \mathbf{Z}_p$. By Lemma \ref{lem1}, the integral vanishes.

Case 3 ($I_{++-+}$)

\begin{equation*}
\mathbf{I}_{++-+} = 
    \Int_{\mathbf{Q}_p-\mathbf{Z}_p} \Int_{\mathbf{Z}_p}|z|_p^{-6s+2} \psi(-z^2y^3)\Int_{\mathbf{Q}_p} f_s(n^-(-x)) \psi(xz)\,dx \,dy \,dz.
\end{equation*}

Notice that since $|z|_p > 1$ then $z \notin \mathbf{Z}_p$. By Lemma \ref{lem1}, the integral vanishes.

Case 4 ($I_{++--}$)

\begin{equation*}
\mathbf{I}_{++--} = 
   \Int_{\mathbf{Q}_p-\mathbf{Z}_p} \Int_{\mathbf{Q}_p-\mathbf{Z}_p} f_s(n^-(-x)) |y|_p^{-9s+3} |z|_p^{-6s+2} \psi(-z^2y^3) \Int_{\mathbf{Q}_p} \psi(xy^3z)\,du \,dy \,dz.
\end{equation*}

Notice that since $|y|_p > 1$ and $|z|_p > 1$, then $|y^3z|_p > 1$ which implies that $y^3z \notin \mathbf{Z}_p$. By Lemma \ref{lem1}, the integral vanishes. 

Case 5 ($I_{+-++}$)

\begin{equation*}
\mathbf{I}_{+-++}
    =\Int_{\mathbf{Q}_p-\mathbf{Z}_p} \Int_{\mathbf{Z}_p} \Int_{\mathbf{Z}_p}  |u|_p^{-9s+4} \psi(bu-u^3z^2 -3uy^2-3u^2yz) \Int_{\mathbf{Q}_p} f_s(n^-(-x)) \psi(x)\,dx \,dz \,dy \,du.
\end{equation*}

Notice that the integral with respect to $x$ is identical to the integral with respect to $x$ in Case 1 ($I_{++++}$). Let $\psi_1(a,d) = a^2 +3ad + 3d^2$. Then,

\begin{equation*}
\mathbf{I}_{+-++} = 
    (1-p^{-3s}) \Int_{\mathbf{Q}_p-\mathbf{Z}_p} \Int_{\mathbf{Z}_p} \Int_{\mathbf{Z}_p}  |u|_p^{-9s+4} \psi(bu-u\psi_1(uz,y))\,dz \,dy \,du.
\end{equation*}

Applying Corollary \ref{cor1} we get,

\begin{equation*}
    \mathbf{I}_{+-++} =  (1-p^{-3s}) \qquad \sum_{U=-\infty}^{-1} p^{-U(-9s+5)}\  \Int_{\mathbf{Z}_p^*} \Int_{\mathbf{Z}_p} \Int_{\mathbf{Z}_p} \psi(b\mu p^{U}- \mu p^U\psi_1(\mu p^Uz,y)) \,dz \,dy \,d\mu.
\end{equation*}

Let $w = \mu p^Uz \rightarrow z = \mu^{-1}p^{-U}w$. Then, $z \in \mathbf{Z}_p \iff w \in p^U\mathbf{Z}_p$. Also, by Lemma \ref{lem2} $\,dz = p^U \,dw$. So, 

\begin{equation*}
    \mathbf{I}_{+-++} =  (1-p^{-3s}) \qquad \sum_{U=-\infty}^{-1} p^{-U(-9s+4)}\  \Int_{\mathbf{Z}_p^*} \Int_{\mathbf{Z}_p} \Int_{p^U\mathbf{Z}_p} \psi(\mu p^U (b - \psi_1(w,y))) \,dw \,dy \,d\mu.
\end{equation*}

Applying Lemma \ref{lem3},

\begin{equation*}
    \mathbf{I}_{+-++} = (1-p^{-3s}) \qquad \sum_{U=-\infty}^{-1} p^{-U(-9s+4)}\  \Int_{\mathbf{Z}_p} \Int_{p^U\mathbf{Z}_p} \mathbbm{1}_{\mathbf{Z}_p}(p^U(b - \psi_1(w,y)))
\end{equation*}

\begin{equation*}
    - p^{-1} \mathbbm{1}_{\mathbf{Z}_p}(p^{U+1}(b - \psi_1(w,y))) \,dw \,dy.
\end{equation*}
For $k \in \mathbf{Z}, k \geq 0,$ let
\begin{equation*}
    S_{k,b} = \{(w,y) \in \mathbf{Z}_p^2: b - \psi_1(w,y)  \in p^k\Z_p\}.
\end{equation*}
Notice that, for any nonpositive integer $U,$
$$S_{k,b} = \{(w,y) \in p^U \Z_p \times \mathbf{Z}_p: b - \psi_1(w,y) \in p^k \Z_p\}.$$
That is, allowing $w$ to range over the larger 
set $p^U\Z_p$ does not add any elements
for if $\ord(w) = W < 0,$ then 
$\psi_1(w,y),$ and hence also $b-\psi_1(w,y),$ will have order $2W<0
\le k.$
Now, the set $S_{k,b}$ is a union of $p^k \Z_p\times p^k \Z_p$ cosets, since 
the truth or falsity of the statement ``$b - \psi_1(w,y) \in p^k \Z_p$''
only depends on the image of $w,$ and $y$ in the quotient ring
$\Z_p /p^k \Z_p =\Z/p^k \Z.$
Thus if $$V(k,b) := Vol(S_{k,b}) = \Int_{\mathbf{Z}_p} \Int_{\mathbf{Z}_p} \mathbbm{1}_{\mathbf{Z}_p}(p^{-k}(b - \psi_1(w,y)) \,dw \,dy ,$$
then for $k\ge 1,$
\begin{equation}\label{eq:V(k,b)}
\begin{aligned}
    V(k,b) &= p^{-2k} \# \{(w_1,y_1) \in (\frac{\mathbf{Z}}{p^k\mathbf{Z}})^2 : \psi_1(w,y) = b \mod p^k\},
    \end{aligned}
\end{equation}
since each coset of $p^k \Z_p\times p^k \Z_p$
has volume $p^{-2k}.$
Further, 
\begin{equation*}
     \mathbf{I}_{+-++}=(1-p^{-3s}) \qquad \sum_{U=-\infty}^{-1} p^{-U(-9s+4)}\ \ (V(-U,b) - p^{-1}V(-U-1,b)).
\end{equation*}

\begin{prop}\label{prop0}
Let $p$ be a prime such that $p \equiv 5 \pmod{6}.$ Then 
for any positive integer $k$ 
$\# \{(w_1,y_1) \in (\frac{\mathbf{Z}}{p^k\mathbf{Z}})^2 : \psi_1(w,y) = b\} = (p+1)p^{k-1}.$
\end{prop}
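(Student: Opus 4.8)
The plan is to turn the quadratic $\psi_1$ into a norm form, count its solutions over $\mathbf{Z}/p\mathbf{Z}$ by a norm-surjectivity argument, and then bootstrap to $\mathbf{Z}/p^k\mathbf{Z}$ by exactly the Hensel-type lifting already used in Lemma \ref{lem6} and Corollary \ref{cor2}.

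\textbf{Step 1 (reduction to a diagonal form).} Since $p\equiv 5\pmod 6$ we have $p\ge 5$, so $2,3,4$ are units modulo $p^k$. Completing the square, $4\,\psi_1(w,y)=(2w+3y)^2+3y^2$, and the substitution $(w,y)\mapsto(s,y):=(2w+3y,\,y)$ is a bijection of $(\mathbf{Z}/p^k\mathbf{Z})^2$ because its matrix has determinant $2\in(\mathbf{Z}/p^k\mathbf{Z})^\times$. Hence the quantity we want equals $\#\{(s,y)\in(\mathbf{Z}/p^k\mathbf{Z})^2 : s^2+3y^2\equiv 4b\pmod{p^k}\}$, and here $4b$ is a unit.

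\textbf{Step 2 (the case $k=1$).} Because $p\equiv 2\pmod 3$, quadratic reciprocity gives $\left(\tfrac{-3}{p}\right)=-1$, so $-3$ is a nonsquare, $\mathbf{F}_{p^2}=\mathbf{F}_p(\sqrt{-3})$, and $s^2+3y^2=N_{\mathbf{F}_{p^2}/\mathbf{F}_p}(s+y\sqrt{-3})$. The norm map $\mathbf{F}_{p^2}^\times\to\mathbf{F}_p^\times$ is a surjective group homomorphism, so each nonzero element — in particular $4b$ — is the norm of exactly $(p^2-1)/(p-1)=p+1$ elements; since only $0$ has norm $0$, there are exactly $p+1$ pairs $(s,y)$ with $s^2+3y^2\equiv 4b\pmod p$. (Equivalently, one may write $N_1=p+\sum_{y\bmod p}\left(\tfrac{4b-3y^2}{p}\right)$ and use the standard evaluation $\sum_{y\bmod p}\left(\tfrac{4b-3y^2}{p}\right)=-\left(\tfrac{-3}{p}\right)=1$.)

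\textbf{Step 3 (lifting).} Put $F(s,y)=s^2+3y^2-4b$. Its gradient $(2s,6y)$ is $\equiv(0,0)\pmod p$ only at $(s,y)\equiv(0,0)$, where $F\equiv -4b\not\equiv 0\pmod p$; so at every point of $\{F\equiv 0\}\bmod p$ the reduced gradient is nonzero, hence the same holds at the reduction mod $p$ of any solution of $F\equiv 0\pmod{p^j}$. Now run the argument of Lemma \ref{lem6} verbatim with two variables instead of three: given a solution mod $p^{j}$, its $p^2$ candidate lifts mod $p^{j+1}$ satisfy $F\equiv 0\pmod{p^{j+1}}$ iff a single nontrivial affine-linear equation over $\mathbf{Z}/p\mathbf{Z}$ holds, and such an equation has exactly $p$ solutions. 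So each solution mod $p^j$ has exactly $p$ lifts mod $p^{j+1}$ (a single power of $p$, reflecting two variables rather than the three in Lemma \ref{lem6}/Corollary \ref{cor2}), and induction from the value $p+1$ at level $1$ yields $(p+1)p^{k-1}$ at level $k$.

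I expect Step 2 to be the only place needing genuinely new input, and even there the subtlety is mild: one only has to record $\left(\tfrac{-3}{p}\right)=-1$ for $p\equiv 5\pmod 6$ and the anisotropy of $s^2+3y^2$ over $\mathbf{F}_p$ (equivalently, that it is a norm form). Step 3 is a carbon copy of Lemma \ref{lem6}. It is worth flagging that the argument genuinely uses $p\nmid 6b$, so the standing assumption $b\in\mathbf{Z}_p^\times$ (from $\sigma=(1,0,b,c)$ with $b,c\in\mathbf{Z}_p^\times$) should be recalled in the statement of the proposition.
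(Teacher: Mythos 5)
Your proof is correct and follows essentially the same route as the paper's: identify the quadratic form with a norm form on $\mathbf{F}_{p^2}$ (you diagonalize to $s^2+3y^2$ and use $\mathbf{F}_p(\sqrt{-3})$, the paper works with $\psi_1$ directly as a norm from $\mathbf{F}_p[x]/(x^2+x+1)$ — the same field), count $p+1$ solutions at level $k=1$ via norm-fiber counting, and lift by the two-variable Hensel argument in the style of Lemma \ref{lem6}. Your flag about $b\in\mathbf{Z}_p^\times$ being needed in the statement is a fair catch.
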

\begin{proof}
    Since $p \equiv 5 \pmod{6},$ the finite field $\Z/p\Z$
    contains no nontrivial cube roots of $1.$ so the polynomial 
    $x^2+x+1$ is irreducible over $\Z/p\Z$ and a zero $\alpha$
    of it generates the unique quadratic extension $\F_{p^2}.$ 
    Note that if $\alpha$ is one of the zeros of 
    $x^2+x+1,$ then the other one is $\alpha^p,$ which is also 
    $\alpha^2,$ because $\alpha^3=1$ and $p \equiv 2 \mod 3.$
    Each element of $\F_{p^2}$ has the form $c+d\alpha$ for 
    $c,d \in \Z/p\Z,$ and the {\bf norm} 
    $$N(c+d\alpha ) = (c+d\alpha)(c+d\alpha^p)=(c+d\alpha)^{p+1}=c^2-cd+d^2$$
    maps $\F_{p^2}$ to $(\Z/p\Z)$  For $y,w \in \Z/p\Z,$ we can 
    now recognize $\psi_1(y,w)$ as the norm of $y+w-w\alpha.$ 
    So, when $k=1$ we are counting the number of elements of
    $\F_{p^2}$ of norm $b.$
    Since $\F_{p^2}^*$ is a cyclic group of order $p^2-1,$
    whose unique subgroup of order $p-1$ is $(\Z/p\Z)^*,$ and since $N(\xi)=\xi^{p+1},$ it follows that each element of $(\Z/p\Z)^*,$ 
    is the norm of precisely $p+1$ elements of $\F_{p^2}^*.$ 
    This completes the case $k=1.$ 

    We now proceed by induction. Note that 
    $$\psi_1(w_1+w_2p^k, y_1+y_2p^k)
    \equiv \psi_1(w_1,w_2)+p^k((2w_1+3y_1)w_2+(3w_1+6y_1)y_2) \mod p^{k+1}.$$
    Also, if $\psi_1(w_1, y_1) \equiv b \ne 0 \mod p^k $
    with $k \ge 1,$ then $(w_1, y_1) \neq 0 \mod p.$ But then 
    $(2w_1+3y_1, 3w_1+6y_1) \neq 0 \mod p,$ since $\bsbm 2&3\\ 3&6 \esbm$ is nonsingular $\mod p.$ But then 
    $$(w_2, y_2) \mapsto (2w_1+3y_1)w_2+(3w_1+6y_1)y_2$$
    is a nontrivial linear map $\Z/p\Z^2 \to \Z/p\Z,$ so it is 
    surjective and each of its fibers has $p$ elements. 
    It follows that the number of
    solutions to $\psi_1(w,y) = b \mod p^{k+1}$ is precisely
    $p$ times the number of solutions mod $k.$ This completes the proof.
    \end{proof}

Combining proposition \ref{prop0} with equation 
\eqref{eq:V(k,b)}, and noting that when $k=0,$ the set
$S_{k,b}$ is clearly all of $\Z_p^2,$ which has volume one, 
we obtain.
\begin{equation*}
V(k,b) =
\left\{
   \begin{array}{cc}
      1, & k = 0, \\
      (1+p^{-1})p^{-k}, & k \geq 1. \\
   \end{array}\right.
\end{equation*}

Hence, $V(k,b) - p^{-1}V(k-1,b) $ is equal to 

\begin{equation*}
\left\{
   \begin{array}{cc}
      (1+p^{-1})p^{-1} - p^{-1}(1) = p^{-1}+p^{-2}-p^{-1} = p^{-2},& k = 1, \\
      (1+p^{-1})p^{-k} - p^{-1}(1+p^{-1})p^{-k+1} = (1+p^{-1})(p^{-k} - p^{-k+1-1}) = 0, & k \geq 2. \\
   \end{array}\right.
\end{equation*}

Going back to the integral, we now know $V(-U,b) - p^{-1}V(-U-1,b) = p^{-2}$ if $U = -1,$ and zero otherwise.

Therefore,
\begin{equation*}
    \mathbf{I}_{+-++} = (1-p^{-3s})p^{-9s+4}p^{-2} =  (1-p^{-3s})p^{-9s+2}.
\end{equation*}

Case 6 ($I_{+-+-}$)

\begin{equation*}
\mathbf{I}_{+-+-} 
    =\Int_{\mathbf{Z}_p} \Int_{\mathbf{Z}_p} \Int_{\mathbf{Q}_p-\mathbf{Z}_p}|y|_p^{-9s+3} |u|_p^{-9s+4} \psi(-3uy(y+uz)-u^3z^2) \Int_{\mathbf{Q}_p} f_s(n^-(-x)) \psi(xy^3)\,dx \,du \,dy \,dz.
\end{equation*}

Notice that the integral with respect to $x$ is identical to the integral with respect to $x$ in Case 2 ($I_{+++-}$), and we've proven it vanishes. Therefore the integral vanishes.

Case 7 ($I_{+--+}$)

\begin{equation*}
\mathbf{I}_{+--+}
    =\Int_{\mathbf{Z}_p} \Int_{\mathbf{Q}_p-\mathbf{Z}_p} \Int_{\mathbf{Q}_p-\mathbf{Z}_p}|u|_p^{-9s+4} |z|_p^{-6s+2} \psi(-u^3z^2 - y^3z^2 - 3u^2yz^2- 3uy^2z^2) \Int_{\mathbf{Q}_p} f_s(n^-(-x)) \psi(xz)\,dx \,du \,dy \,dz.
\end{equation*}

Notice that the integral with respect to $x$ is identical to the integral with respect to $x$ in Case 3 ($I_{++-+}$), and we've proven it vanishes. Therefore the integral vanishes.

Case 8 ($I_{+---}$)

\begin{equation*}
\mathbf{I}_{+---}
    =\Int_{\mathbf{Q}_p-\mathbf{Z}_p} \Int_{\mathbf{Q}_p-\mathbf{Z}_p}\Int_{\mathbf{Q}_p-\mathbf{Z}_p} |u|_p^{-9s+4} |z|_p^{-6s+2} |y|_p^{-3s+1} \psi(-u^3z^2- y^3z^2- 3u^2yz^2- 3uy^2z^2) 
\end{equation*}

\begin{equation*}
    \Int_{\mathbf{Q}_p} f_s(n^-(-x)) \psi(xy^3z)\,dx \,du \,dy \,dz.
\end{equation*}

Notice that the integral with respect to $x$ is identical to the integral with respect to $x$ in Case 3 ($I_{++--}$), and  we've proven it vanishes. Therefore the integral vanishes.

Case 9 ($I_{-+++}$)

\begin{equation*}
\mathbf{I}_{-+++} 
    = \Int_{\mathbf{Q}_p-\mathbf{Z}_p} \Int_{\mathbf{Z}_p} |v|_p^{-3s+1} \psi(cv- u^3v +buv) \Int_{\mathbf{Q}_p} f_s(n^-(-x)) \psi(xv^{-1})\,dx \,du \,dv.
\end{equation*}

Notice that since $|v|_p > 1$, then $\ord_pv < 0$. Considering $v^{-1}$, notice $|v^{-1}| = p^{\ord_pv} < 1$. So this implies that $v^{-1} \in \mathbf{Z}_p$. By Lemma \ref{lem1}, 

\begin{equation*}\begin{aligned}
  \mathbf{I}_{-+++}   &=(\frac{1-p^{-3s}}{1-p^{-3s+1}}) \Int_{\mathbf{Q}_p-\mathbf{Z}_p} \Int_{\mathbf{Z}_p} |v|_p^{-3s+1} (1-|v^{-1}|^{3s-1}p^{-3s+1}) \psi(cv -u^3v+buv)  \,du \,dv
   \\& =(\frac{1-p^{-3s}}{1-p^{-3s+1}}) \Int_{\mathbf{Q}_p-\mathbf{Z}_p}  \Int_{\mathbf{Z}_p} (|v|^{1-3s}-|v|^{2-6s}p^{-3s+1}) \psi(cv -u^3v+buv) \,du \,dv.
\\&=\frac{1-p^{-3s}}{1-p^{-3s+1}}[H(1-3s) - p^{1-3s}H(2-6s)], \end{aligned}
\end{equation*}
where
\begin{equation*}
    H(\lambda) = \Int_{\mathbf{Q}_p-\mathbf{Z}_p}  \Int_{\mathbf{Z}_p} |v|^{\lambda}\psi(cv -u^3v+buv) \,du \,dv.
\end{equation*}

Applying Corollary \ref{cor1},
\begin{equation*}
     H(\lambda) = \Int_{\mathbf{Z}_p} \qquad \sum_{V=-\infty}^{-1} p^{-V(\lambda+1)}\  \Int_{\mathbf{Z}_p^*} \psi(p^V\nu(c-u^3+bu)) \,d\nu \,du.
\end{equation*}

Applying Lemma \ref{lem3},
\begin{equation*}
    H(\lambda) = \qquad \sum_{V=-\infty}^{-1} p^{-V(\lambda+1)}\ \Int_{\mathbf{Z}_p} \mathbbm{1}_{\mathbf{Z}_p}(p^V(u^3-bu-c))-p^{-1}\mathbbm{1}_{\mathbf{Z}_p}(p^{V+1}(u^3-bu-c)) \,du.
\end{equation*}

For a nonpositive integer $V,$ let 

\begin{equation*}
    h(V) = \Int_{\mathbf{Z}_p} \mathbbm{1}_{\mathbf{Z}_p}(p^V(u^3-bu-c)) \,du  
    =Vol(\{u \in \mathbf{Z}_p: |u^3-bu-c|_p \leq p^{V}\}).
\end{equation*}

Then,

\begin{equation*}
    H(\lambda) = \qquad \sum_{V=-\infty}^{-1} p^{-V(\lambda+1)}\ (h(V)-p^{-1}h(V+1)).
\end{equation*}

By Lemma \ref{lem4},
\begin{equation*}
h(V) =
\left\{
   \begin{array}{cc}
      Vol(\mathbf{Z}_p) = 1, & V = 0, \\
      N(b,c)p^{V}, & V< 0, \\
   \end{array}\right.
\end{equation*}
where $N(b,c) \in \{0,1,3\}$ is the number of zeros of $g(u) = -u^3 +bu +c$ mod $p.$ 
(And, in particular, $N(b,c) = 0$ when $g$ is irreducible.
Thus, 
$$
h(V)-p^{-1}h(V+1)=\begin{cases}
    N(b,c)p^{-1}-p^{-1}, & V=-1, \\
    0, & V< -1, 
\end{cases}
$$
and hence 

\begin{equation*}
\begin{aligned}
    H(\lambda) &= p^{\lambda +1}(N(b,c)-1)p^{-1} 
\\
    &=p^{\lambda}(N(b,c)-1).
\end{aligned}
\end{equation*}

Therefore,

\begin{equation*}
    \mathbf{I}_{-+++} = \frac{1-p^{-3s}}{1-p^{-3s+1}}[p^{-3s+1}(N(b,c)-1) - p^{-3s+1}p^{-6s+2}(N(b,c)-1)]
\end{equation*}

\begin{equation*}
   = (1-p^{-3s})(N(b,c) - 1)(\frac{1 - p^{-6s+2}}{1-p^{-3s+1}})p^{-3s+1} = (1-p^{-3s})(N(b,c)-1)(1+p^{-3s+1})p^{-3s+1}     
\end{equation*}
\begin{equation*}
    = (1-p^{-3s})(N(b,c)-1)(p^{-3s+1}+p^{-6s+2}).
\end{equation*}
In particular, when $g$ is irreducible, 
$$I_{-+++} = -p^{-3s+1}(1-p^{-3s})(1+p^{-3s+1}).$$

Case 10 ($I_{-++-}$)

\begin{equation*}
\mathbf{I}_{-++-} =
    \Int_{\mathbf{Q}_p - \mathbf{Z}_p} \Int_{\mathbf{Q}_p - \mathbf{Z}_p} \Int_{\mathbf{Z}_p} |v|_p^{-3s+1} |y|_p^{-9s+3}\psi(cv- u^3v-3uy+ buv) \Int_{\mathbf{Q}_p} f_s(n^-(-x))\psi(xv^{-1}y^3) \,dx \,du \,dv \,dy.
\end{equation*}

If $v^{-1}y^3 \notin \mathbf{Z}_p$ then the $x$ integral vanishes. 
So Lemma \ref{lem1}, yields a new integral where the 
condition $|y^3|_p \le |v|_p$ must be incorporated into the 
domain of integration: 
\begin{equation*}
\begin{aligned}
    \mathbf{I}_{-++-} &= 
    \frac{1-p^{-3s}}{1-p^{-3s+1}}\Int_{\mathbf{Z}_p}\iint\limits_{|v|_p \geq |y^3|_p>1}  |v|_p^{-3s+1} |y|_p^{-9s+3}\psi(cv+buv- u^3v-3uy)(1-|v^{-1}y^3|_p^{3s-1}p^{-3s+1})  \,dv \,dy
\, du\\ &
    =\frac{1-p^{-3s}}{1-p^{-3s+1}}(H(1-3s,3-9s)-p^{1-3s}H(2-6s,0)),\end{aligned}
\end{equation*}
where
\begin{equation*}
    H(\lambda,\mu) =  \Int_{\mathbf{Z}_p} \iint\limits_{|v|_p \geq |y^3|_p>1} |y|_p^\mu|v|_p^\lambda \psi(-3uy+(c+ bu- u^3)v) \,dv \,dy \,du.
\end{equation*}

Applying Corollary \ref{cor1} in both $y$ and $v$,
\begin{equation*}
    H(\lambda,\mu) = \Int_{\mathbf{Z}_p} \mathop{\sum_{Y=-\infty}^{-1}\sum_{V=-\infty}^{3Y}}  p^{Y(\mu+1)+V(\lambda+1)} \Int_{\mathbf{Z}_p^*} \psi(3up^Y\gamma) \,d\gamma \Int_{\mathbf{Z}_p^*} \psi((c+ bu- u^3)p^V\nu) \,d\nu \,du.
\end{equation*}

Applying Lemma \ref{lem3}
\begin{equation*}
   H(\lambda,\mu)  = \mathop{\sum_{Y=-\infty}^{-1}\sum_{V=-\infty}^{3Y}}  p^{Y(\mu+1)+V(\lambda+1)}  \Int_{\mathbf{Z}_p} (\mathbbm{1}_{\mathbf{Z}_p}(3up^Y) - p^{-1}\mathbbm{1}_{\mathbf{Z}_p}(3up^{Y+1}) 
\end{equation*}

\begin{equation*}
    (\mathbbm{1}_{\mathbf{Z}_p}((c+ bu- u^3)p^V) - p^{-1}\mathbbm{1}_{\mathbf{Z}_p}((c+ bu- u^3)p^{V+1})\,du.
\end{equation*}








We now make use of the assumption that 
$g(u)$ is irreducible$\mod p$. 
It follows from this assumption that $g(u) \in \Z_p^\times$ for
all $u \in \Z_p.$
Also, it follows that 
$1_{\Z_p}(g(u) p^V) = 1_{\Z_p}(g(u)p^{v+1})=0$ for all $V< 3Y \le -3.$

Therefore $ H(\lambda,\mu) = 0$, so the integral vanishes.

Case 11 ($I_{-+-+}$) 

\begin{equation*}
    \mathbf{I}_{-+-+} =  \Int_{\mathbf{Q}_p - \mathbf{Z}_p} \Int_{\mathbf{Z}_p}  \Int_{\mathbf{Q}_p - \mathbf{Z}_p} \Int_{\mathbf{Z}_p} |v|_p^{-3s+1} |z|_p^{-6s+2}\psi(z+cv-u^3v+buv-3uyz-v^{-1}y^3z^2)  
\end{equation*}

\begin{equation*}
    \Int_{\mathbf{Q}_p} f_s(n^-(-x))\psi(v^{-1}xz) \,dx \,du \,dv \,dy \,dz.
\end{equation*}

 By Lemma \ref{lem1},
\begin{equation*}\begin{aligned}
    \mathbf{I}_{-+-+} =\frac{1-p^{-3s}}{1-p^{-3s+1}} \iint\limits_{|v|_p\geq|z|_p>1} \Int_{\mathbf{Z}_p} \Int_{\mathbf{Z}_p} & |v|_p^{-3s+1} |z|_p^{-6s+2}
    (1-|v^{-1}z|_p^{3s-1}p^{-3s+1}) 
   \\ &\psi(z+cv+buv- u^3v-3uyz -v^{-1}y^3z^2)\,du \,dy \,dv \,dz .\end{aligned}
\end{equation*}

Let $z = rv.$ Then by Lemma \ref{lem3}, $dz = |v|_p dr$. So

\begin{equation*}
\begin{aligned}    
   \mathbf{I}_{-+-+} =\frac{1-p^{-3s}}{1-p^{-3s+1}} \iint\limits_{|v^{-1}|_p<|r|_p\leq 1} \Int_{\mathbf{Z}_p} \Int_{\mathbf{Z}_p} & |v|_p^{-9s+4} |r|_p^{-6s+2}
    (1-|r|_p^{3s-1}p^{-3s+1}) \\ &
    \psi(v(r+c+bu-u^3-3uyr-y^3r^2)) \,du \,dy \,dv \,dr.  
\end{aligned}
 \end{equation*}

Applying Corollary \ref{cor1},
\begin{equation*}
    \mathbf{I}_{-+-+} = \frac{1-p^{-3s}}{1-p^{-3s+1}} \Int_{\mathbf{Z}_p} \Int_{\mathbf{Z}_p} \Int_{\mathbf{Z}_p} \mathop{\sum_{V=-\infty}^{-\text{ord}(r)-1}} |r|_p^{-6s+2} p^{-V(-9s+5)} (1-|r|_p^{3s-1}p^{-3s+1})
\end{equation*}

\begin{equation*}
     \Int_{\mathbf{Z}_p^*} \psi(\nu p^V(c+bu-u^3+r-3uyr-y^3r^2)) \,d\nu \,du \,dy \,dr.
\end{equation*}

Applying Lemma \ref{lem3},
\begin{equation*}
    \mathbf{I}_{-+-+} = \frac{1-p^{-3s}}{1-p^{-3s+1}} \Int_{\mathbf{Z}_p} \Int_{\mathbf{Z}_p} \Int_{\mathbf{Z}_p} \mathop{\sum_{V=-\infty}^{-\text{ord}(r)-1}} |r|_p^{-6s+2} p^{-V(-9s+5)} (1-|r|_p^{3s-1}p^{-3s+1})
\end{equation*}

\begin{equation*}
    (\mathbbm{1}_{\mathbf{Z}_p}(p^V(g(u)-h(u,y,r))) - p^{-1}\mathbbm{1}_{\mathbf{Z}_p}(p^{V+1}(g(u)-h(u,y,r))))\,du \,dy \,dr,
\end{equation*}

where $g(u) = -u^3 +bu +c$ and $h(u,y,r) = 3uyr + y^3r^2 -r$. There are two possibilities: either $r \in \mathbf{Z}_p^*$ or not. 

Consider first $r \notin \mathbf{Z}_p^*$. Then 
$r \in p\Z-p.$ Since $u,y \in \mathbf{Z}_p$ then $h(u,y,r) \in p\mathbf{Z}_p.$  However since $g(u) \in \mathbf{Z}_p^*$ then $g(u) - h(u,y,r) \in \mathbf{Z}_p^*$. Therefore $\mathbbm{1}_{\mathbf{Z}_p}(p^{-k}(g(u)-h(u,y,r))) = 0$, $\forall k>0$.

Now consider $r \in \mathbf{Z}_p^*$. Define 
\begin{equation*}
\begin{aligned}
N(V) &= \# \{(r,y,u) \in (\frac{\mathbf{Z}}{p^{-V}\mathbf{Z}})^* \times (\frac{\mathbf{Z}}{p^{-V}\mathbf{Z}}) \times (\frac{\mathbf{Z}}{p^{-V}\mathbf{Z}}) |  g(u) - h(r,y,u) \equiv 0 \mod p^{-V} \},\\
    H(V) &= \Int_{\mathbf{Z}_p} \Int_{\mathbf{Z}_p} \Int_{\mathbf{Z}_p^*} \mathbbm{1}_{\mathbf{Z}_p}(p^V(g(u)-h(u,y,r))) \,dr \,du \,dy\\
    &=\on{Vol}\{ (r,y,u) \in \mathbf{Z}_p^* \times \mathbf{Z}_p \times \mathbf{Z}_p | p^{-V}(g(u) - h(r,y,u))\in \mathbf{Z}_p\}.
    \end{aligned}
\end{equation*}

Then, $H(V) = p^{3V} N(V),$ and 
\begin{equation*}
    \mathbf{I}_{-+-+} = (1-p^{-3s}) \mathop{\sum_{V=-\infty}^{-1}} p^{-V(-9s+5)} (H(V) - p^{-1}H(V+1)).
\end{equation*}

\begin{conj}\label{conj1}
For all primes $p \equiv 2 \mod 3$, $N(-1) = p^2 -1$ ($\forall b,c$ in $\Z$ such that  $g$ is irreducible mod p).
\end{conj}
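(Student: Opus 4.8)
The plan is to reduce, by two elementary changes of variable, to a count of $\F_p$-points where a line meets a conic, and then to apply the classical fact that an irreducible cubic over a finite field has square discriminant.

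First I would use that $p\equiv 2\pmod 3$, so cubing is a bijection of $\F_p$: the substitution $r=\rho^3$, $z=y\rho^2$ is then a bijection of $\F_p^\times\times\F_p$ in the variables $(r,y)\leftrightarrow(\rho,z)$, and it recasts the equation $g(u)=3uyr+y^3r^2-r$, $r\neq0$, as
\[
\rho^3-u^3-z^3-3u\rho z=-(bu+c),\qquad \rho\neq0.
\]
I would do this first because the right-hand side is now a symmetric cubic form with a classical factorization: with $(a,b,c)=(\rho,-u,-z)$ in $a^3+b^3+c^3-3abc=(a+b+c)(a^2+b^2+c^2-ab-bc-ca)$ one gets $\rho^3-u^3-z^3-3u\rho z=(\rho-u-z)\,Q(\rho,u,z)$ with $Q=\rho^2+u^2+z^2+\rho u-uz+z\rho$ (this $Q$ is the norm form of $\F_{p^2}/\F_p$ in disguise). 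I would then pass to the $\F_p$-linear coordinates $\alpha=\rho+z$, $\beta=z-u$, retaining $z$ — a linear bijection of $\F_p^3$ — under which $Q=\alpha^2-\alpha\beta+\beta^2$, $\rho-u-z=\alpha+\beta-3z$, $bu+c=b(z-\beta)+c$, and the constraint $\rho\neq0$ becomes $z\neq\alpha$; the equation is then linear in $z$.

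Next I would count solutions of this linear-in-$z$ equation for each $(\alpha,\beta)$ (discarding $z=\alpha$), splitting off the stratum $\alpha^2-\alpha\beta+\beta^2=b/3$ on which the coefficient of $z$ vanishes; here I use that the norm map $\F_{p^2}^\times\to\F_p^\times$ is $(p+1)$-to-one, so the conic $\alpha^2-\alpha\beta+\beta^2=\nu$ has $p+1$ points for every $\nu\in\F_p^\times$. This bookkeeping should give
\[
N(-1)=p^2-2p-1+m_1'+(p-1)\,m_2,
\]
where $m_2=\#\{(\alpha,\beta):\alpha^2-\alpha\beta+\beta^2=b/3,\ b(2\beta-\alpha)=3c\}$ and $m_1'=\#\{(\alpha,\gamma):\alpha^2-\alpha\gamma+\gamma^2=b/3,\ \alpha^3=g(\gamma)\}$; for $m_1'$ one observes that on the conic $\alpha^3+\gamma^3=(\alpha+\gamma)(b/3)$, so $\alpha^3=g(\gamma)$ is equivalent there to the linear condition $b(\alpha-2\gamma)=3c$. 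Thus $m_1'$ and $m_2$ each count $\F_p$-points where a line meets the anisotropic conic $\alpha^2-\alpha\beta+\beta^2=b/3$.

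To finish, substitute the line into the conic: in each case one gets a quadratic whose discriminant is $9b^2(4b^3-27c^2)$, i.e.\ a nonzero square times $\operatorname{disc}(g)=4b^3-27c^2$. Since $g$ is irreducible over $\F_p$ it is separable (finite fields are perfect) with Galois group cyclic of order $3$, hence contained in $A_3$, so $\operatorname{disc}(g)$ is a nonzero square mod $p$; each quadratic then has two distinct roots, $m_1'=m_2=2$, and $N(-1)=p^2-2p-1+2+2(p-1)=p^2-1$. The one genuinely non-mechanical step is the factorization-and-linearization: recognizing that after the cube-root substitution the right-hand side is a leftover linear factor times the norm form of $\F_{p^2}$ is exactly what collapses a three-variable cubic equation into a one-parameter family of linear ones. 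The rest is routine, but I expect the care to lie in handling the degenerate stratum $\alpha^2-\alpha\beta+\beta^2=b/3$ together with the excluded locus $z=\alpha$ — it is the combination $m_1'+(p-1)m_2$, not either term alone, that produces the clean value — and in checking the small primes (e.g.\ $p=2$) directly.
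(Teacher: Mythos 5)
The paper does \emph{not} prove this statement: it is left open as Conjecture~\ref{conj0}/\ref{conj1}, verified numerically for $p\le 89$, with only the remark that it ought to be deducible from Xiong's result~\cite{Xiong}. There is therefore no in-paper argument to compare against; what you have written is, if correct, the elementary proof the authors explicitly say would be desirable.

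Having checked the steps, I believe your argument is correct for $p\ge 5$. The cube-root substitution $r=\rho^3$, $z=y\rho^2$ is a bijection of $\F_p^\times\times\F_p$ precisely because $p\equiv 2\pmod 3$; the resulting form $\rho^3-u^3-z^3-3u\rho z$ does factor via the symmetric identity with $(a,b,c)=(\rho,-u,-z)$; the linear change $\alpha=\rho+z$, $\beta=z-u$ (retaining $z$) sends $Q_0=\rho^2+u^2+z^2+\rho u-uz+z\rho$ to the norm form $\alpha^2-\alpha\beta+\beta^2$ and makes the constraint linear in $z$ with coefficient $3Q-b$; and the nonzero level sets of the norm form have $p+1$ points. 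I verified the bookkeeping formula $N(-1)=p^2-2p-1+m_1'+(p-1)m_2$: the $p$ solutions of $\alpha^3=g(\gamma)$ in $\F_p^2$ (one $\alpha$ per $\gamma$, since cubing is bijective) are exactly the pairs where the excluded value $z=\alpha$ satisfies the linear equation, and the identity $Q(\alpha,\beta)=Q(\alpha,\alpha-\beta)$ --- which you use implicitly but should state --- is what permits the change of variable $\gamma=\alpha-\beta\ (=u)$ and identifies the two conics, placing $m_1'$ of these $p$ pairs on the stratum $Q=b/3$ and $p-m_1'$ off it. I also confirmed that both line-into-conic substitutions yield quadratics with discriminant $9b^2(4b^3-27c^2)$, a nonzero square since $4b^3-27c^2=\on{disc}(g)$ and $g$, being irreducible over $\F_p$, has cyclic Galois group $\Z/3\subset A_3$. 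Hence $m_1'=m_2=2$ and $N(-1)=p^2-1$. Two minor remarks: the same change $\gamma=\alpha-\beta$ in fact carries the defining conditions of $m_2$ to those of $m_1'$, so $m_1'=m_2$ \emph{a priori} and one discriminant computation suffices; and the quadratic-discriminant step uses $p\ne 2$, so $p=2$ (which the paper's main theorem does not need, since it assumes $p\equiv 5\pmod 6$) should be disposed of by the direct check you already suggest.
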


This restates conjecture \ref{conj0} from the introduction, and is proved by Scharaschkin in the appendix. 

\begin{cor} \label{cor3}
If $p \equiv 2 \mod 3$, k an integer, $k \leq 0$, then
$$
H(k) = Vol\{(r,y,u) \in \mathbf{Z}_p^* \times \mathbf{Z}_p \times \mathbf{Z}_p | p^{k}(g(u) - h(r,y,u))) \in \mathbf{Z}_p\} =
\left\{
   \begin{array}{cc}
      \frac{p-1}{p}, & \text{if } k=0,\\
      \frac{p^2-1}{p^3}(p^{k+1}), & \text{if } k \leq -1. \\
   \end{array}\right.
$$
\end{cor}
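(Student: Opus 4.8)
The plan is to read off $H(k)$ from the recursion of Corollary~\ref{cor2} together with Conjecture~\ref{conj1}. First I would dispose of the case $k=0$ directly: since $g$ and $h$ have coefficients in $\Z_p$ and $(r,y,u)$ ranges over $\Z_p^*\times\Z_p\times\Z_p$, the value $g(u)-h(r,y,u)$ always lies in $\Z_p$, so the condition $p^{0}(g(u)-h(r,y,u))\in\Z_p$ is vacuous and $H(0)=\on{Vol}(\Z_p^*\times\Z_p\times\Z_p)=1-p^{-1}=\tfrac{p-1}{p}$.

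For $k\le -1$ I would use the bookkeeping already set up just before the statement: writing $\widetilde h(r,y,u):=g(u)-h(r,y,u)=-u^3+bu+c+r-3uyr-y^3r^2$ and letting $N(k)$ be the number of $(r,y,u)\in(\Z/p^{-k}\Z)^*\times(\Z/p^{-k}\Z)^2$ with $\widetilde h(r,y,u)\equiv 0\pmod{p^{-k}}$, one has $H(k)=p^{3k}N(k)$ because the solution set is a disjoint union of cosets of $(p^{-k}\Z_p)^3$, each of volume $p^{3k}$. So it suffices to prove $N(k)=(p^2-1)p^{-2k-2}$ for $k\le -1$; then $H(k)=p^{3k}(p^2-1)p^{-2k-2}=(p^2-1)p^{k-2}=\tfrac{p^2-1}{p^3}\,p^{k+1}$, which is the claimed formula.

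To evaluate $N(k)$ I would apply Corollary~\ref{cor2}, which gives $N(k-1)=p^2N(k)$ for every $k\le -1$ as soon as the hypothesis of Lemma~\ref{lem6} holds for $\widetilde h$: that the mod-$p$ reduction of the gradient $(\widetilde h_r,\widetilde h_y,\widetilde h_u)$ does not vanish at any point of the solution set, with $r$ restricted to units (which is legitimate by the remark after Lemma~\ref{lem5}). This nonvanishing is the one genuine thing to check, and it is short. One computes
$$
\widetilde h_r=1-3uy-2y^3r,\qquad \widetilde h_y=-3r\,(u+y^2r),\qquad \widetilde h_u=b-3u^2-3yr,
$$
and supposes all three are $\equiv 0\pmod p$ with $r\in(\Z/p\Z)^\times$. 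Since $p\neq 3$ and $r\not\equiv 0$, $\widetilde h_y\equiv 0$ forces $u\equiv -y^2r$; substituting into $\widetilde h_r\equiv 0$ gives $1+y^3r\equiv 0$, so $y^3r\equiv -1$ (hence $y\not\equiv 0$); substituting $u\equiv -y^2r$ into $\widetilde h_u\equiv 0$ then gives $b\equiv 3u^2+3yr\equiv 3y^4r^2+3yr=3yr(y^3r+1)\equiv 0\pmod p$, contradicting $b\in\Z_p^\times$. Therefore the gradient of $\widetilde h$ is nonzero mod $p$ at every point with $r$ a unit, Corollary~\ref{cor2} applies for all $k\le -1$, and $N(k)=p^{2(-k-1)}N(-1)$.

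To finish, Conjecture~\ref{conj1} supplies $N(-1)=p^2-1$, whence $N(k)=(p^2-1)p^{-2k-2}$ for $k\le -1$, and the formula for $H(k)$ follows as above. The only genuine input is Conjecture~\ref{conj1}; everything else is the elementary gradient computation together with the coset count $H(k)=p^{3k}N(k)$ already in place. The step that deserves care --- and the only one I regard as nontrivial --- is the gradient verification establishing the hypothesis of Lemma~\ref{lem6}, since without it the recursion of Corollary~\ref{cor2} is unavailable.
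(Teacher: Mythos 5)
Your proposal follows the same overall structure as the paper's proof --- dispose of $k=0$ by a volume count, set $H(k)=p^{3k}N(k)$, use Conjecture~\ref{conj1} as the base case $N(-1)=p^2-1$, and propagate with the recursion $N(k-1)=p^2N(k)$ from Corollary~\ref{cor2} --- but you add one ingredient the paper's proof omits, and it is a genuine gap in the paper. Corollary~\ref{cor2} is conditional: it requires (via Lemma~\ref{lem6}) that the reduction mod $p$ of the gradient $(\widetilde h_r,\widetilde h_y,\widetilde h_u)$ be nonzero at every point of the solution set (with $r$ a unit). The paper invokes Corollary~\ref{cor2} without verifying this hypothesis for $\widetilde h(r,y,u)=g(u)-h(r,y,u)$. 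Your gradient computation
$$
\widetilde h_r=1-3uy-2y^3r,\qquad \widetilde h_y=-3r(u+y^2r),\qquad \widetilde h_u=b-3u^2-3yr,
$$
and the three-step elimination (from $\widetilde h_y\equiv0$ and $r\not\equiv0$, $p\neq3$ get $u\equiv-y^2r$; then $\widetilde h_r\equiv0$ gives $y^3r\equiv-1$; then $\widetilde h_u\equiv0$ gives $b\equiv0$, contradicting $b\in\Z_p^\times$) is correct and is exactly what's needed to legitimize the use of Corollary~\ref{cor2}. So your proof is not just a repackaging: it closes a hole in the paper's argument. The only cosmetic difference is that you run the induction on $N(k)$ and then convert to $H(k)$ at the end, whereas the paper inducts directly on $H(k)$; these are equivalent. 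One small clarification worth making explicit: the nonvanishing must also be checked at points where $\widetilde h$ does not vanish mod $p$ but does vanish mod $p^{-V}$ for some $V<-1$ --- but your argument handles this, since you rule out simultaneous vanishing of the gradient entries mod $p$ for all $(r,y,u)$ with $r$ a unit, which is strictly stronger than what Lemma~\ref{lem6} needs.
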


\begin{proof}
Let us show first that when $k=0,$ $H(k) = \frac{p-1}{p}$. Notice that $\forall p$, the volume of the set $\mathbf{Z}_p$ is $1$ and the volume of the set $\mathbf{Z}_p^*$ is simply $\frac{p-1}{p}$. Then the volume of the Cartesian product $\mathbf{Z}_p^* \times \mathbf{Z}_p \times \mathbf{Z}_p$ is just $(\frac{p-1}{p})(1)(1) = \frac{p-1}{p}$. Now we must show the case where $k \leq -1$. Notice that by conjecture \ref{conj1}, $H(k) = p^{3k}N(k)$ implies $H(-1) = p^{3(-1)}(p^2-1) = \frac{p^2-1}{p^3} = (\frac{p^2-1}{p^3})(p^{-1+1})$. So the base case where $k = -1$ is satisfied. Assume the statement is true $ \forall k \in \{-1,-2,...,k'\}$, i.e. $H(k) = p^{3k}N(k) = (\frac{p^2-1}{p^3})(p^{k+1})$. Then the next case by definition is, $H(k'-1) = p^{3(k'-1)}N(k'-1)$. By Corollary \ref{cor2}, $N(k-1) = p^2N(k), \forall k\leq -1$. Therefore,
\begin{equation*}
\begin{aligned}
    H(k'-1) &= p^{3(k'-1)}(p^2)N(k') = p^{3k'}p^{-3}p^2N(k') = p^{-1}(p^{3k'}N(k')) \\&= p^{-1}(\frac{p^2-1}{p^3})(p^{k'+1}) = (\frac{p^2-1}{p^3})(p^{(k'-1)+1}).
    \end{aligned}
\end{equation*}

Hence, case $k = k'-1$ is also satisfied. By the Principle of Strong Mathematical Induction, $\forall k \leq -1$, $H(k) = (\frac{p^2-1}{p^3})(p^{k+1})$. 
\end{proof}
Therefore assuming $p \equiv 2 \mod 3$, we can apply Corollary \ref{cor3}.

\begin{equation*}
H(V) -p^{-1}H(V+1) =
\left\{
   \begin{array}{cc}
      (\frac{p^2-1}{p^3}) - p^{-1}\frac{p-1}{p} = \frac{p-1}{p^3} ,& \text{if } V=-1,\\
      (\frac{p^2-1}{p^3})(p^{V+1}) - p^{-1}(\frac{p^2-1}{p^3})(p^{V+2})) = 0 , & \text{if } V < -1 .\\
   \end{array}\right.
\end{equation*}

Therefore,
\begin{equation*}
    \mathbf{I}_{-+-+} =(1-p^{-3s}) p^{-9s+5}(\frac{p-1}{p^3}) = (1-p^{-3s})(1-p^{-1})p^{-9s+3} = (1-p^{-3s})(p^{-9s+3}-p^{-9s+2}).
\end{equation*}

Before moving on to the next case, we remark that, even without conjecture
\ref{conj1} we can say that $H(k) = p^{-1}H(k+1)$ for $k < -1,$ 
and hence that
$$H(V) - p^{-1}H(V+1)=\begin{cases}
    \frac{N(-1)-p^2+p}{p^3}, & V=-1\\ 0, &V< -1,
\end{cases}$$
and 
$$\mathbf{I}_{-+-+}
= (1-p)^{-3s} p^{-9s+3}(1-p+\frac{N(-1)}p).
$$
Thus, conjecture \ref{conj1} can actually be deduced from Xiong's
result, by way of our results on the other fifteen sub-integrals. 

Case 12 ($I_{-+--}$) 

\begin{equation*}
\begin{aligned}
    \mathbf{I}_{-+--} =&
     \Int_{\mathbf{Q}_p - \mathbf{Z}_p} \Int_{\mathbf{Q}_p - \mathbf{Z}_p} \Int_{\mathbf{Q}_p - \mathbf{Z}_p} \Int_{\mathbf{Z}_p}   \psi (z+cv- u^3v+buv-3uyz -v^{-1}y^3z^2)\\&
|v|_p^{-3s+1} |z|_p^{-6s+2}  |y|_p^{-9s+3}
     \Int_{\mathbf{Q}_p} f_s(n^-(-x)) \psi(v^{-1}xy^3z) \,dx \,du \,dz \,dv \,dy.\end{aligned}
\end{equation*}

By Lemma \ref{lem1}, if $v^{-1}y^3z \notin \mathbf{Z}_p$ then the $x$ integral vanishes. Hence we obtain a new integral
where the domain of integration includes the constraint 
$v^{-1}y^3z \in \mathbf{Z}_p$. 
\begin{equation*}
    \mathbf{I}_{-+--} = \frac{1-p^{-3s}}{1-p^{-3s+1}}\Int_{\mathbf{Q}_p - \mathbf{Z}_p} \Int_{\mathbf{Q}_p - y^3\mathbf{Z}_p} \Int_{y^{-3} v \Z_p - \mathbf{Z}_p} \Int_{\mathbf{Z}_p}  |v|_p^{-3s+1} |z|^{-6s+2}  |y|_p^{-9s+3} 
\end{equation*}

\begin{equation*}
    \psi (z+cv- u^3v+buv-3uyz -v^{-1}y^3z^2 + v^{-1}xy^3z)(1-|\frac{y^{3}z}{v}|_p^{3s-1}p^{-3s+1}) \,du \,dz \,dv \,dy.
\end{equation*}

Let $z = rv$, then $dz = |v|_p dr.$ This implies that $|rv|_p > 1 \iff |r|_p>|v|_p^{-1}$, $|y^3r|_p \leq 1 \iff |r|_p \leq |y|_p^{-3}$. Therefore $|y|_p^3 < |v|_p$. Hence, 

\begin{equation*}
    \mathbf{I}_{-+--} = \frac{1-p^{-3s}}{1-p^{-3s+1}}\Int_{\mathbf{Q}_p - \mathbf{Z}_p} \Int_{|v|>|y|^3}\Int_{|v|^{-1}<|r| \leq |y|^{-3}} \Int_{\mathbf{Z}_p}  |v|_p^{4-9s} |r|_p^{2-6s}  |y|_p^{3-9s} (1-|y^3r|_p^{3s-1}p^{-3s+1}) 
\end{equation*}

\begin{equation*}
    \psi (v(r+c-u^3+bu-3uyr-y^3r^2)) \,du \,dr \,dv \,dy.
\end{equation*}

Notice that $|y^3r^2| \leq |r| \leq |y|^{-3} \leq p^{-3}$, which implies that $|yr| \leq |y||y|^{-3} \leq |y|^{-2} \leq p^{-2}$. Let $v = p^V\nu$ and $y = p^Y\gamma,$ with $\nu, \gamma \in \Z_p^*$. By Lemma \ref{lem2}, $dv = p^{-V}d\nu$ and $dy =p^{-Y} d\gamma$. Applying Corollary \ref{cor1},

\begin{equation*}
\begin{aligned}    \mathbf{I}_{-+--} = \mathop{\sum_{Y=-\infty}^{-1}\sum_{V=-\infty}^{3Y-1}} & p^{-V(5-9s)-Y(4-9s)} \Int_{p^V < |r|\leq p^{3Y}} |r|_p^{2-6s}
\\ &
     \Int_{\mathbf{Z}_p} \Int_{\mathbf{Z}_p^*} \Int_{\mathbf{Z}_p^*}
    \psi(p^V\nu(r+c-u^3+bu-3up^Y\gamma r-(p^Y\gamma)^3r^2)) \,d\nu \,d\gamma \,du \,dr.
\end{aligned}
\end{equation*}

By Lemma \ref{lem3},
\begin{equation*}
     \Int_{\mathbf{Z}_p^*} \psi(p^V\nu(r+c-u^3+bu-3up^Y\gamma r-(p^Y\gamma)^3r^2)) \,d\nu  
\end{equation*}

\begin{equation*}
    = \mathbbm{1}_{\mathbf{Z}_p}(p^V\nu(r+c-u^3+bu-3up^Y\gamma r-(p^Y\gamma)^3r^2))-p^{-1}\mathbbm{1}_{\mathbf{Z}_p}(p^{V+1}\nu(r+c-u^3+bu-3up^Y\gamma r-(p^Y\gamma)^3r^2)).
\end{equation*}

Assuming that $u^3- bu-c$ is irreducible mod p ($b,c \in \mathbf{Z}_p^*$) and $p>3$ makes $p^V\nu(r+c-u^3+bu-3up^Y\gamma r-(p^Y\gamma)^3r^2) \notin p^{-1}\mathbf{Z}_p$.  
Therefore $\mathbbm{1}_{\mathbf{Z}_p}(p^V\nu(r+c-u^3+bu-3up^Y\gamma r-(p^Y\gamma)^3r^2)) = 0$ and the integral vanishes. 

Case 13 ($I_{--++}$) 

\begin{equation*}
\mathbf{I}_{--++} 
     = \Int_{\mathbf{Q}_p - \mathbf{Z}_p} \Int_{\mathbf{Q}_p - \mathbf{Z}_p} \Int_{\mathbf{Z}_p} \Int_{\mathbf{Z}_p}   \psi (vg(u)- (2uz + 3y)u^2 - \frac{(u^2z^2 +3y^2 +3uyz)u}{v}) 
\end{equation*}

\begin{equation*}
     |v|_p^{-3s+1}  |u|_p^{-9s+4} \Int_{\mathbf{Q}_p} f_s(n^-(-x)) \psi(v^{-1}x) \,dx  \,dz \,dy \,du \,dv,
\end{equation*}
where $g(u) = c +bu -u^3$.

Notice that $v \notin \mathbf{Z}_p$ which implies that $ord(v) < 0,$
and hence that 
 $v^{-1} \in \mathbf{Z}_p$. By Lemma \ref{lem1},
 
\begin{equation*}\begin{aligned}
    \mathbf{I}_{--++} = & \frac{1-p^{-3s}}{1-p^{-3s+1}}\Int_{\mathbf{Q}_p - \mathbf{Z}_p} \Int_{\mathbf{Q}_p - \mathbf{Z}_p} \Int_{\mathbf{Z}_p} \Int_{\mathbf{Z}_p} |v|_p^{-3s+1}  |u|_p^{-9s+4}  
(1-|v^{-1}|^{3s-1}p^{-3s+1})\\&
    \psi (vg(u)- (2uz + 3y)u^2 - \frac{(u^2z^2 +3y^2 +3uyz)u}{v}) \,dz \,dy \,du \,dv.
    \end{aligned}
\end{equation*}

Let $y=v y_1$, $z= vz_1$. By Lemma \ref{lem2}, $\,dy = |v|_p\,dy_1$ and $\,dz = |v|_p\,dz_1$. 

\begin{equation*}\begin{aligned}
    \mathbf{I}_{--++} = & \frac{1-p^{-3s}}{1-p^{-3s+1}}\Int_{\mathbf{Q}_p - \mathbf{Z}_p}  \Int_{\mathbf{Q}_p - \mathbf{Z}_p} \Int_{v^{-1}\mathbf{Z}_p} \Int_{v^{-1}\mathbf{Z}_p} |v|_p^{-3s+3}  |u|_p^{-9s+4}  
(1-|v^{-1}|^{3s-1}p^{-3s+1})\\&
    \psi (v(g(u)- (2uz_1 + 3y_1)u^2 - ((uz_1)^2 +3y_1^2 +3uy_1z_1)u))\,dy_1 \,dz_1 \,du \,dv.
    \end{aligned}
\end{equation*}

\begin{lem}\label{lem8}
    For all $u,y_1,z_1$ in the domain of integration, the order of $g(u)- (2uz_1 + 3y_1)u^2 - ((uz_1)^2 +3y_1^2 +3uy_1z_1)u$ is equal to $3 \ord(u).$ In particular, it is at most $-3.$
\end{lem}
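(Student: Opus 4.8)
The plan is to multiply the expression out, regroup it by powers of $u$, and then read off the $p$-adic order term by term using the strong triangle inequality (equivalently, the statement $\ord(a+b)\ge\min(\ord a,\ord b)$ with equality when $\ord a\ne\ord b$).

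First I would do the algebra. Writing $g(u)=c+bu-u^{3}$ and expanding, the quantity
\[
g(u)-(2uz_1+3y_1)u^{2}-\bigl((uz_1)^{2}+3y_1^{2}+3uy_1z_1\bigr)u
\]
regroups, after collecting powers of $u$, as
\[
c+u(b-3y_1^{2})-3u^{2}y_1(1+z_1)-u^{3}(1+z_1)^{2}.
\]
The only mildly clever point is that the cubic-in-$u$ part is $-u^{3}(1+2z_1+z_1^{2})=-u^{3}(1+z_1)^{2}$, a unit square times $-u^{3}$.

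Next I would extract what the Case 13 domain of integration supplies. Since $u\in\mathbf{Q}_p-\mathbf{Z}_p$ we have $U:=\ord(u)\le-1$. Since $v\in\mathbf{Q}_p-\mathbf{Z}_p$ and $z_1\in v^{-1}\mathbf{Z}_p$, we get $|z_1|_p\le|v|_p^{-1}<1$, hence $1+z_1\in\mathbf{Z}_p^{\times}$ and $\ord\bigl((1+z_1)^{2}\bigr)=0$; likewise $y_1\in v^{-1}\mathbf{Z}_p$ forces $\ord(y_1)\ge-\ord(v)\ge1$, so $3y_1^{2}\in p\mathbf{Z}_p$ and, since $b\in\mathbf{Z}_p^{\times}$, also $b-3y_1^{2}\in\mathbf{Z}_p^{\times}$. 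Consequently the four summands above have orders, respectively, $0$; exactly $U$; at least $2U+1$; and exactly $3U$.

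Finally I would compare these orders. Because $U\le-1$ one has $3U<U<0$ and $3U<2U+1$, so $-u^{3}(1+z_1)^{2}$ is the unique summand of smallest order. The strong triangle inequality then yields that the whole expression has order exactly $3U=3\ord(u)$, and $U\le-1$ makes this at most $-3$, as claimed. I do not expect a genuine obstacle: the only things to be careful about are the bookkeeping of the domain constraints that make $1+z_1$ and $b-3y_1^{2}$ units, and the elementary inequalities among $0,\,U,\,2U+1,\,3U$ for $U\le-1$; the factorization into $-u^{3}(1+z_1)^{2}$ is what prevents the leading term from degenerating, so that its order is visibly $3\ord(u)$.
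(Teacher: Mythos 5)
Your proof is correct and uses essentially the same idea as the paper: identify the term whose $p$-adic order is strictly smallest and conclude via the strong triangle inequality. The only difference is organizational: the paper keeps the expression grouped as $g(u)$ minus two other summands and shows $g(u)$ dominates, whereas you regroup by powers of $u$, exposing the $u^3$-coefficient as the unit square $-(1+z_1)^2$, which makes the leading order $3\ord(u)$ especially transparent; this is a tidier rendering of the same argument, not a new one.
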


\begin{proof}
Since $\ord(u) < 0$ it follows from the strong triangle inequality that 
$\ord(g(u)) = \ord(u^3) = 3 \ord(u).$ We claim that this is the minimal 
order among the three summands. 

First, notice that since $y_1, z_1 \in v^{-1}\mathbf{Z}_p$, it is true that both $\ord(z_1) \text{ and } \ord(y_1)$ are positive, so $\ord(u)$ is strictly smaller to them. Using this, it is sufficient to show the order of the first summand is smaller than the other two. We will only show the case of the second summand, since the third follows analogously. Since $2$ and $3$  have zero order, then $\ord(2(uz_1)u^2) = \ord(u^3z_1) = 3\ord(u) + \ord(z_1) > 3\ord(u)$. Similarly, $\ord(3y_1u^2) = \ord(y_1u^2) = \ord(y_1) + 2\ord(u) \geq 2\ord(u) > 3\ord(u)$ (Again because the $\ord(u)$ is negative). Then $\ord((2uz_1+3y_1)u^2) = \ord(u^3z_1+y_1u^2) = \ord(u^3z_1) + \ord(y_1u^2) > 3\ord(u)$. Therefore, the order of the first summand is smaller than the orders of the other two, and that means $3\ord(u)$ is the order of the sum.\end{proof}

In view of Lemma \ref{lem8}, it follows from Corollary
\ref{cor1} and Lemma \ref{lem3} that $I_{--++}$ vanishes. 

Case 14 ($I_{--+-}$)

\begin{equation*}
    \mathbf{I}_{--+-} 
     = \Int_{\mathbf{Q}_p - \mathbf{Z}_p} \Int_{\mathbf{Q}_p - \mathbf{Z}_p} \Int_{\mathbf{Q}_p - \mathbf{Z}_p} \Int_{\mathbf{Z}_p}  |v|_p^{-3s+1} |y|_p^{-9s+3} |u|_p^{-9s+4} 
\end{equation*}

\begin{equation*}
   \psi (vg(u)- (2uz + 3y)u^2 - \frac{(u^2z^2 +3y^2 +3uyz)u}{v})  \Int_{\mathbf{Z}_p} f_s(n^-(-x)) \psi(v^{-1}xy^3) \,dx  \,dz \,dy \,du \,dv. 
\end{equation*}
where $g(u) = c +bu -u^3$.

If $v^{-1}y^3 \notin \mathbf{Z}_p$ the $x$ integral vanishes. Note that  $v^{-1}y^3 \in \mathbf{Z}_p$
is equivalent to 
$|y|_p^3 \le |v|$. By Lemma \ref{lem1}, 

\begin{equation*}
    \mathbf{I}_{--+-} =  \frac{1-p^{-3s}}{1-p^{-3s+1}}\int\limits_{\mathbf{Q}_p - \mathbf{Z}_p}\ \iint\limits_{|v|_p\ge |y|_p^3> 1} \ \Int_{\mathbf{Z}_p} |v|_p^{-3s+1} |y|_p^{-9s+3} |u|_p^{-9s+4} 
\end{equation*}

\begin{equation*}
    \psi (vg(u) - (2uz+3y)u^2 - \frac{(u^2z^2+3y^2+3uyz)u}{v}) (1-|v^{-1}y^3|^{3s-1}p^{-3s+1}) \,dz \,dv \,dy \,du.
\end{equation*}

Let $y=v y_1$, $z= vz_1$. Then $z_1 \in v^{-1}\mathbf{Z}_p,$ and $|v|_p^{-2} \ge |y_1|^3_p> |v|_p^{-3}.$ 
Also, by Lemma \ref{lem2}, $\,dy = |v|\,dy_1$ and $\,dz = |v| \,dz_1$.
Hence, 
\begin{equation*}
    \mathbf{I}_{--+-} =  \frac{1-p^{-3s}}{1-p^{-3s+1}}\Int_{\mathbf{Q}_p - \mathbf{Z}_p}\Int_{\mathbf{Q}_p - p\mathbf{Z}_p}\  
    \Int_{|v|_p^{-2} \ge |y_1|_p^3 > |v|_p^{-3}} 
    \ \Int_{v^{-1}\mathbf{Z}_p} |v|_p^{-3s+3} |y|_p^{-9s+4} |u|_p^{-9s+4} (1-|v^{-1}y^3|^{3s-1}p^{-3s+1})
\end{equation*}

\begin{equation*}
    \psi (v(g(u)- (2uz_1 + 3y_1)u^2 - ((uz_1)^2 +3y_1^2 +3uy_1z_1)u)) \,dz_1  \,dy_1 \,dv \,du.
\end{equation*}

Notice that, inside of $\psi$, 
we have the same function as in $I_{--++}$. Let's check if Lemma \ref{lem8} is still true. 
First, note that each of the 
variables $z_1,y_1$ ranges over a subset of 
$p\mathbf{Z}_p$ which depends on $v,$ 
while $u$ ranges over $\Q_p-\Z_p.$ 
Hence, $\ord(z_1) \geq 1 > \ord(u)$. Then $\ord(2(uz_1)u^2) = \ord(u^3z_1) = 3\ord(u) + \ord(z_1) > 3\ord(u)  \text{ and } \ord(3y_1u^2) = \ord(u^2y_1) = 2\ord(u) + \ord(y_1) > 2\ord(u) -2/3\ord(v) > 2\ord(u) > 3\ord(u)$ (Since the order of both $u,v$ is negative). So, $\ord((2uz_1+3y_1)u^2) \geq \min(\ord(u^3z_1),\ord(u^2y_1)) > 3\ord(u)$. An analogous argument can also be made for the third summand. Once again, the order of the first summand is smaller than the orders of the other two, so Lemma \ref{lem8} holds.

In view of Lemma \ref{lem8}, it follows from Corollary
\ref{cor1} and Lemma \ref{lem3} that $I_{--+-}$ vanishes.



Case 15 ($I_{---+}$)

\begin{equation*}
    \mathbf{I}_{---+} 
    = \Int_{\mathbf{Q}_p - \mathbf{Z}_p} \Int_{\mathbf{Q}_p - \mathbf{Z}_p} \Int_{\mathbf{Q}_p - \mathbf{Z}_p} \Int_{\mathbf{Z}_p} |v|_p^{-3s+1}  |u|_p^{-9s+4} |z|_p^{-6s+2}
\end{equation*}

\begin{equation*}
    \psi(cv- u^3v- 2u^3z +buv - u^3v^{-1}z^2 -3u^2yz - v^{-1}y^3z^2 -3u^2v^{-1}yz^2 -3uv^{-1}y^2z^2) 
\end{equation*}

\begin{equation*}
    \Int_{\mathbf{Q}_p} f_s(n^-(-x))\psi(v^{-1}zx) \,dx \,dy \,du \,dv \,dz.
\end{equation*}
By Lemma \ref{lem1}, and the fact that
$v^{-1} z \in \Z_p \iff z \in v \Z_p,$
\begin{equation*}
    \mathbf{I}_{---+} = \frac{1-p^{-3s}}{1-p^{-3s+1}} \Int_{\Q_p-\Z_p} \Int_{\Q_p-\Z_p} \Int_{v \Z_p - \Z_p}  \Int_{\mathbf{Z}_p} |v|_p^{-3s+1}  |u|_p^{-9s+4} |z|_p^{-6s+2}
\end{equation*}
\begin{equation*}
    \psi(cv- u^3v- 2u^3z +buv - u^3v^{-1}z^2 -3u^2yz - v^{-1}y^3z^2 -3u^2v^{-1}yz^2 -3uv^{-1}y^2z^2)(1-|v^{-1}z|^{3s-1}p^{-3s+1}) \,dy \,dz \,dv \,du.
\end{equation*}

Let $z = rv.$  Then by Lemma \ref{lem2}, $dz = |v|_p dr$. Rearranging the terms inside $\psi$, we get the following:

\begin{equation*}
    \mathbf{I}_{---+} =\frac{1-p^{-3s}}{1-p^{-3s+1}} \Int_{\Q_p-\Z_p} \iint_{1 \geq |r|_p > |v|_p^{-1}}   
    \Int_{\mathbf{Z}_p} (1-|r|_p^{3s-1}p^{-3s+1})|v|_p^{-9s+4} |u|_p^{-9s+4} |r|_p^{-6s+2}
\end{equation*}

\begin{equation*}
    \psi(v(-u^3(r+1)^2 -u^2(3yr(r+1)) + u(b-3y^2r^2) + (c-r^2y^3))) \,dy \,dr  \,dv   \,du. 
\end{equation*}

Since $y,r,b$ and $c$ are all in $\Z_p,$ it follows that 
$$\ord(-u^3(r+1)^2 -u^2(3yr(r+1)) + u(b-3y^2r^2) + (c-r^2y^3))
= \ord(-u^3(r+1)^2 ) \le -3,
$$
whenever $\ord(u(r+1)) < 0.$ On the other hand, 
if $\ord(u(r+1)) \ge 0,$ then $\ord(r+1) > 0,$
and hence $\ord(r) =1.$ The constraint
$|r|_p > |v|_p^{-1}$ becomes $v \in \Q_p-\Z_p$
for all such $r.$ We obtain 

\begin{equation*}
   \mathbf{I}_{---+}  =(1-p^{-3s}) \Int_{\Q_p-\Z_p} \Int_{\Q_p-\Z_p} \Int_{|r+1|_p \leq |u|_p^{-1}}   
    \Int_{\mathbf{Z}_p} |uv|_p^{-9s+4} 
\end{equation*}

\begin{equation*}
    \psi(v(-u^3(r+1)^2 -u^2(3yr(r+1)) + u(b-3y^2r^2) + (c-r^2y^3))) \,dy \,dr  \,dv   \,du.
\end{equation*}

Now let $y_1 = y/u$. Since $y \in \mathbf{Z}_p,$
then $|y_1|_p \leq |u|_p^{-1}$. Then by Lemma \ref{lem2}, $dy = |u|_p dy_1$. Rearranging again the terms inside $\psi$, we get the following:

\begin{equation*}
    \mathbf{I}_{---+} =(1-p^{-3s}) \Int_{\Q_p-\Z_p} \Int_{\Q_p-\Z_p} \Int_{|r+1|_p \leq |u|_p^{-1}}   
    \Int_{|y_1|_p \leq |u|_p^{-1}} |v|_p^{-9s+4} |u|_p^{-9s+5}
\end{equation*}

\begin{equation*}
    \psi(v(c+bu -u^3(1+r(3y_1+2)+r^2(y_1+1)^3))) \,dy_1 \,dr  \,dv   \,du. 
\end{equation*}

Let $v_1 = vu^3.$ Since $v \in \mathbf{Q}_p - \mathbf{Z}_p,$ then $ |v_1|_p > |u|_p^{3}$. Then by Lemma \ref{lem2}, $dv = |u|_p^{-3} dv_1$. So,

\begin{equation*}
    \mathbf{I}_{---+} =(1-p^{-3s}) \Int_{\Q_p-\Z_p} \Int_{|v_1|_p > |u|_p^{3}} \Int_{|r+1|_p \leq |u|_p^{-1}}  \Int_{|y_1|_p \leq |u|_p^{-1}} |v_1|_p^{-9s+4} |u|_p^{18s-10}
\end{equation*}

\begin{equation*}
    \psi(\frac{v_1}{u^3}(c+bu) - v_1(1+r(3y_1+2)+r^2(y_1+1)^3))\,dy_1 \,dr  \,dv_1   \,du.
\end{equation*}

Let $u_1 = u^{-1}.$ By Lemma \ref{lem2}, $\frac{du}{|u|_p} = \frac{du_1}{|u_1|_p},$ so $du = |u_1|_p^2du_1$. 
As $u$ ranges over $\Q_p-\Z_p,$ its inverse, 
$u_1,$ ranges over $p\Z_p-\{0\}.$
Since the measure of $\{0\}$ is $0,$ we obtain,

\begin{equation*}
    \mathbf{I}_{---+} =(1-p^{-3s}) \Int_{p\Z_p} \Int_{|v_1|_p > |u|_p^{3}} \Int_{|r+1|_p \leq |u|_p^{-1}}  \Int_{|y_1|_p \leq |u|_p^{-1}} |v_1|_p^{-9s+4} |u_1|_p^{-18s+8}
\end{equation*}

\begin{equation*}
    \psi(v_1(u_1^3c+u_1^2b) - v_1(1+r(3y_1+2)+r^2(y_1+1)^3)) \,dy_1 \,dr  \,dv_1   \,du_1 
\end{equation*}

\begin{equation*}
    =(1-p^{-3s}) \mathop{\sum_{U=1}^{\infty}} \Int_{|u_1|_p=p^{-U}} \Int_{|v_1|_p > p^{3U}} \Int_{|r+1|_p \leq p^{-U}}  \Int_{|y_1|_p \leq p^{-U}} |v_1u_1^2|_p^{-9s+4} 
\end{equation*}

\begin{equation*}
    \psi(v_1(u_1^3c+u_1^2b) - v_1(1+r(3y_1+2)+r^2(y_1+1)^3)) \,dy_1 \,dr  \,dv_1   \,du_1.
\end{equation*}

Let $u_1 = p^U u_2, u_2 \in \mathbf{Z}_p^*$. Then by Lemma 2, $du_1 = p^{-U}du_2$. Applying Corollary \ref{cor1},

\begin{equation*}
   =(1-p^{-3s}) \mathop{\sum_{U=1}^{\infty}} p^{U(18s-9)} \Int_{\mathbf{Z}_p^*} \psi(v_1(u_2^3p^{3U}c+u_2^2p^{2U}b)\,du_2   
\end{equation*}

\begin{equation*}
    \Int_{|v_1|_p > p^{3U}}|v_1|_p^{-9s+4} \Int_{|r+1|_p \leq p^{-U}}  \Int_{|y_1|_p \leq p^{-U}}\psi(-v_1(1+r(3y_1+2)+r^2(y_1+1)^3)) \,dy_1 \,dr \,dv_1.
\end{equation*}

We will focus on the $u_2$ integral and show it vanishes. For $a,d \in \Q_p,$ define $H(d)$ and $H_1(d,a)$ as the following:
\begin{equation*}
 H_1(d,a) = \Int_{\mathbf{Z}_p^*} \psi(d(x^2+ax^3)) \,dx,\qquad  
 H(d) = H_1(d,0)= \Int_{\mathbf{Z}_p^*} \psi(dx^2) \,dx .
\end{equation*}


\begin{lem} \label{lem9}
If $a \in p\Z_p$, then $H_1(d,a)=H(d),$
and if $\on{ord}(d) < -1,$ then $H(d)=0.$
\end{lem}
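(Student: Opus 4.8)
The plan is to dispatch the two claims independently, in each case turning the integral into one already understood by a change of variables that preserves Haar measure because it is an isometry of $\Z_p^*$. Throughout I will use that $p$ is odd --- this is forced by the standing hypothesis $p\equiv 5\pmod 6$ in the section where the lemma is applied --- so that Hensel's lemma supplies square roots of elements of $1+p\Z_p$ and squaring is a two-to-one endomorphism of $\Z_p^*$.

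For the first claim I would begin with the factorization $x^2+ax^3=x^2(1+ax)$. Since $a\in p\Z_p$ and $x\in\Z_p$, the factor $1+ax$ lies in $1+p\Z_p$, so $T^2-(1+ax)$ has a simple root modulo $p$ at $T=1$; by Hensel's lemma there is a unique $s(x)\in 1+p\Z_p$ with $s(x)^2=1+ax$, whence $d(x^2+ax^3)=d\,(x\,s(x))^2$. Set $\varphi(x)=x\,s(x)$, a map $\Z_p^*\to\Z_p^*$. The crucial point is that $\varphi$ is an \emph{isometry}: from $s(x)^2-s(x')^2=a(x-x')$ and the fact that $s(x)+s(x')\equiv 2\pmod p$ is a unit one gets $|s(x)-s(x')|_p\le p^{-1}|x-x'|_p$, and then, writing $\varphi(x)-\varphi(x')=x(s(x)-s(x'))+(x-x')s(x')$, the strong triangle inequality gives $|\varphi(x)-\varphi(x')|_p=|x-x'|_p$. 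An isometry of the compact space $\Z_p^*$ into itself is automatically a bijection, and the isometry property forces it to carry each coset $x_0+p^k\Z_p$ (with $k\ge 1$) onto $\varphi(x_0)+p^k\Z_p$; hence, by the criterion recalled in \S\ref{ss:Haar}, $\varphi$ preserves Haar measure. Substituting $w=\varphi(x)$ then yields $H_1(d,a)=\int_{\Z_p^*}\psi(d\varphi(x)^2)\,dx=\int_{\Z_p^*}\psi(dw^2)\,dw=H(d)$.

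For the second claim I would use that the squaring map $x\mapsto x^2$ is a continuous surjective homomorphism $\Z_p^*\to(\Z_p^*)^2$ with kernel $\{\pm 1\}$, where $(\Z_p^*)^2$ has index two in $\Z_p^*$; consequently it pushes $dx$ restricted to $\Z_p^*$ forward to $\on{Vol}(\Z_p^*)/\on{Vol}((\Z_p^*)^2)=2$ times the Haar measure on $(\Z_p^*)^2$, so $H(d)=2\int_{(\Z_p^*)^2}\psi(dw)\,dw$. Since $1+p\Z_p\subseteq(\Z_p^*)^2$ (again Hensel, $p$ odd), the set $(\Z_p^*)^2$ is a finite disjoint union of cosets $j+p\Z_p$, with $j$ running over the quadratic residues mod $p$, and on each such coset $\int_{j+p\Z_p}\psi(dw)\,dw=\psi(dj)\,p^{-1}\mathbbm{1}_{\Z_p}(pd)$ by lemma \ref{lem2} and lemma \ref{lem:char(a)}. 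When $\ord(d)<-1$ we have $pd\notin\Z_p$, so every term vanishes and $H(d)=0$.

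The main obstacle is the first claim: one must make the nonlinear substitution $x\mapsto x\sqrt{1+ax}$ genuinely measure preserving, and the honest way to do so within the toolkit of this paper is the isometry computation above, rather than invoking a $p$-adic change-of-variables theorem. If one prefers to avoid the fact that an isometry of a compact metric space into itself is onto, the same substitution can instead be run inside the finite ring $\Z/p^{m}\Z$ with $m=-\ord(d)$, after first checking that both $\psi(d(x^2+ax^3))$ and $\psi(dx^2)$ are constant on cosets of $p^{m}\Z_p$. The second claim is comparatively soft: its only real input is that $p$ is odd, so that the squaring map has the stated structure and $H(d)$ becomes a positive multiple of the manifestly vanishing integral of $\psi(dw)$ over a union of $p\Z_p$-cosets.
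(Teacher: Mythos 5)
Your proof is correct, but it travels a noticeably different road from the paper's. For the first claim, the paper proves directly (Lemma \ref{cor4}) that $\varphi_a(x)=x^2+ax^3$ induces a bijection of $\{x\in(\Z/p^k\Z)^*:x\equiv x_0\bmod p\}$ onto $\{y:y\equiv\varphi_a(x_0)\bmod p\}$ for every $k\ge 1$, by an explicit Taylor expansion modulo $p^{k+1}$ and the fact that $x_2\mapsto 2x_1x_2$ is a bijection on $\Z/p\Z$; this yields a measure-preserving bijection $x_0+p\Z_p\to x_0^2+p\Z_p$ on each coset, and summing over $x_0$ gives $H_1(d,a)=H(d)$. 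You instead factor $x^2+ax^3=x^2(1+ax)$, extract a Hensel square root $s(x)\in 1+p\Z_p$, set $\varphi(x)=x\,s(x)$, and argue that $\varphi$ is a Haar-preserving bijection of $\Z_p^*$ via the isometry estimate $|\varphi(x)-\varphi(x')|_p=|x-x'|_p$ and the fact that an isometry of a compact metric space into itself is onto. Both are valid; the paper's inductive argument is more elementary and self-contained (it uses nothing beyond the §\ref{ss:Haar} criterion and $p\ne 2$), while yours is arguably slicker once one accepts the compactness fact about isometries, which is not recorded in this paper. For the second claim, the paper gets the vanishing for free from the \emph{same} coset decomposition: each summand $\int_{\varphi_a(x_0)+p\Z_p}\psi(dy)\,dy$ equals $p^{-1}\psi(d\varphi_a(x_0))\int_{\Z_p}\psi(dpy_1)\,dy_1$, which is $0$ by Lemma \ref{lem:char(a)} once $\ord(d)<-1$ (and this in fact proves $H_1(d,a)=0$ for any $a\in p\Z_p$). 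You instead push $dx$ forward through the squaring homomorphism onto $(\Z_p^*)^2$ and then decompose $(\Z_p^*)^2$ into $p\Z_p$-cosets indexed by quadratic residues. This is also correct, but it imports the pushforward-of-Haar machinery and the index-two structure of $(\Z_p^*)^2$, neither of which the paper needs; the paper's computation on a single coset is the lighter route, and it establishes both halves of the lemma in one pass.
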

This can be proven by fixing $x_0 \in \{1,...,p-1\}$ and considering $\varphi_a = x^2 +ax^3$. Notice that $\varphi_a: \mathbf{Z}_p \rightarrow \mathbf{Z}_p,$
for $a \in \Z_p.$

\begin{lem} \label{cor4} 
For $a\in p \Z_p,$
the function 
$\varphi_a$ is a bijection $$\{x \in (\frac{\mathbf{Z}}{p^k\mathbf{Z}})^*| x \equiv x_0 \mod p\} \rightarrow \{y \in (\frac{\mathbf{Z}}{p^k\mathbf{Z}})^*| y \equiv \varphi_a(x_0) \mod p\},$$
for any positive integer $k,$ 
and hence a measure-preserving 
bijection 
$x_0 + p \Z_p \to x_0^2 + p \Z_p,$
for each $x_0 \in \Z_p^*.$
\end{lem}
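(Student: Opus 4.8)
The plan is to establish the congruence‑level bijection first, by a bare‑hands injectivity computation, and then to obtain both the displayed bijection of finite sets and the measure‑preserving bijection of balls by passing to the inverse limit and invoking the coset criterion for preservation of Haar measure recalled earlier. For the normalization we use: $p\neq 2$ (indeed $p\equiv 5\pmod 6$ throughout), so $2\in\Z_p^\times$, and $a\in p\Z_p$ by hypothesis. Writing $\varphi_a(x)=x^2(1+ax)$ with $1+ax\in 1+p\Z_p\subseteq\Z_p^\times$, we see that $\varphi_a$ maps $\Z_p^\times$ into $\Z_p^\times$ and that $\varphi_a(x)\equiv x^2\pmod p$ for every $x\in\Z_p^\times$; in particular $\varphi_a(x_0)+p\Z_p=x_0^2+p\Z_p$, so the target balls in the two assertions coincide.

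\textbf{Step 1 (congruence level).} Fix a positive integer $k$ and put
\[
A_k=\{\,x\in(\Z/p^k\Z)^\times : x\equiv x_0\bmod p\,\},\qquad
B_k=\{\,y\in(\Z/p^k\Z)^\times : y\equiv\varphi_a(x_0)\bmod p\,\},
\]
each of which has $p^{k-1}$ elements. Since $\varphi_a$ is a fixed polynomial over $\Z_p$, reduction modulo $p^k$ gives a well‑defined map $A_k\to B_k$, $x\mapsto\varphi_a(x)$, which lands in $B_k$ by the previous paragraph. For injectivity, suppose $\varphi_a(x)\equiv\varphi_a(x')\pmod{p^k}$ with $x,x'\in A_k$. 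Then
\[
0\equiv (x^2-x'^2)+a(x^3-x'^3)=(x-x')\bigl[(x+x')+a(x^2+xx'+x'^2)\bigr]\pmod{p^k}.
\]
Because $a\in p\Z_p$ and $x+x'\equiv 2x_0\pmod p$ with $2x_0\in\Z_p^\times$, the bracketed factor is a unit modulo $p^k$; cancelling it gives $x\equiv x'\pmod{p^k}$. An injective map between finite sets of equal cardinality is a bijection, so $\varphi_a:A_k\to B_k$ is bijective. Moreover, if $1\le j\le k$ and $x_1\equiv x_0\pmod p$, the same polynomial congruence shows that $\varphi_a$ carries $\{\,x\in A_k:x\equiv x_1\bmod p^j\,\}$ into $\{\,y\in B_k:y\equiv\varphi_a(x_1)\bmod p^j\,\}$, and as both sets have $p^{k-j}$ elements this restriction is again a bijection.

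\textbf{Step 2 (limit and measure).} The bijections of Step 1 commute with the reduction maps $A_{k+1}\to A_k$ and $B_{k+1}\to B_k$ (again because $\varphi_a$ is a single polynomial), so taking the inverse limit over $k$ produces a bijection
\[
x_0+p\Z_p=\varprojlim_k A_k\;\longrightarrow\;\varprojlim_k B_k=\varphi_a(x_0)+p\Z_p=x_0^2+p\Z_p ,
\]
induced by $\varphi_a$; this is the asserted bijection of balls, and it also recovers the finite statement for each $k$. The refinement at the end of Step 1 says precisely that, for every $j\ge 1$ and every $x_1\in x_0+p\Z_p$, the map $\varphi_a$ sends the ball $x_1+p^j\Z_p$ onto the ball $\varphi_a(x_1)+p^j\Z_p$, a ball of the same radius. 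By the criterion recalled in the discussion of Haar measure above — a map carrying each ball $a+p^j\Z_p$ onto a ball $b+p^j\Z_p$ of equal size preserves Haar measure — the bijection $\varphi_a:x_0+p\Z_p\to x_0^2+p\Z_p$ is measure‑preserving, completing the proof. The only delicate point is the cancellation in Step 1: the factor $(x+x')+a(x^2+xx'+x'^2)$ is a unit only because we have restricted to a single residue class $x\equiv x_0\pmod p$, where $x+x'\equiv 2x_0\not\equiv 0$; on all of $\Z_p^\times$ the map $\varphi_a$ need not be injective (it can identify $x$ with a point $\equiv -x\bmod p$), so working one coset at a time is essential.
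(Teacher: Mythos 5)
Your proof is correct, and it takes a genuinely different (though closely related) route from the paper's. The paper proceeds by induction on $k$: it lifts the bijection from level $k$ to level $k+1$ by noting that $\varphi_a(x_1 + p^k x_2) \equiv \varphi_a(x_1) + p^k(2x_1x_2) \pmod{p^{k+1}}$ (the $3ax_1^2x_2$ term being absorbed because $a\in p\Z_p$), and then using that $x_2 \mapsto 2x_1x_2$ is a bijection of $\Z/p\Z$ since $p\neq 2$ and $x_1$ is a unit. You instead prove injectivity in one shot from the exact algebraic factorization $\varphi_a(x)-\varphi_a(x')=(x-x')\bigl[(x+x')+a(x^2+xx'+x'^2)\bigr]$, observing the bracket is a unit on the coset $x_0+p\Z_p$, and then invoke equal cardinalities $p^{k-1}$ to get bijectivity. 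Your identity is in fact a bit stronger than what the lemma demands: it shows $\varphi_a$ restricted to $x_0+p\Z_p$ is an \emph{isometry}, $|\varphi_a(x)-\varphi_a(x')|_p=|x-x'|_p$, from which both the ball-to-ball property and hence measure preservation drop out immediately (the refinement to sub-cosets $x_1+p^j\Z_p$ that you note at the end of Step 1 is precisely this). The paper's digit-by-digit induction is perhaps closer in spirit to a Hensel-lemma-style argument and yields the bijection constructively, while your global factorization is shorter and makes the unit-derivative phenomenon ($\varphi_a'(x_0)\equiv 2x_0\pmod p$) visible in a single line. Your closing remark — that the bracket is a unit only because you work in a single residue class, so restricting to a coset is essential — is accurate and worth keeping.
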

\begin{proof}
    We will prove this statement by induction. If $k = 1$ then we have nothing to prove. Take $k \geq 1$ and write $x = x_1 + p^kx_2$. Then $\varphi_a(x)\equiv \varphi_a(x_1) + p^k(2x_1x_2)\mod{p^{k+1}}.$
    Since $p \ne 2$ and $x_1 \in (\Z/p\Z)^*$ the mapping $x_2 \mapsto 2x_1 x_2$ is a bijection $\Z/p\Z \to \Z/p\Z.$ It follows that 
    $$x_1 + p^kx_2 \mapsto \varphi_a(x_1) + p^k(2x_1x_2)
    $$
    is a bijection 
    $$\{ x \in (\Z/p^{k+1}\Z)^*: x \equiv x_1 \mod{p^k} \} \to \{ y \in (\Z/p^{k+1}\Z)^*: y \equiv \varphi_a(x_1) \mod p^k\}.
    $$
    Thus our bijection at level $k$ extends to a bijection at level $k+1,$
    completing the proof by induction.
\end{proof}
We can now prove Lemma \ref{lem9}. 

\begin{proof}[Proof of Lemma \ref{lem9}]
\begin{equation*} 
    H_1(d,a) = \Int_{\mathbf{Z}_p^*} \psi(d\varphi_a(x)) \,dx = \mathop{\sum_{x_0=1}^{p-1}} \Int_{x_0 + p\mathbf{Z}_p} \psi(d\varphi_a(x)) \,dx = \mathop{\sum_{x_0=1}^{p-1}} \Int_{\varphi_a(x_0) + p\mathbf{Z}_p} \psi(dy) \,dy \text{, Corollary \ref{cor4}}
\end{equation*}

Notice that $\varphi_a(x_0) +p\mathbf{Z}_p = x_0^2 + pax_0^3 +p\mathbf{Z}_p = x_0^2 + p\mathbf{Z}_p$

\begin{equation*}
    H_1(d,a) =  \mathop{\sum_{x_0=1}^{p-1}} \Int_{x_0^2 + p\mathbf{Z}_p} \psi(dy) \,dy = H_1(d,0) = H(d). 
\end{equation*}
Moreover, for each $x_0$
we have 
$$
\Int_{\varphi_a(x_0) + p\mathbf{Z}_p} \psi(dy) \,dy
= p^{-1}\Int_{\Z_p}\psi(d(\varphi_a(x_0) + p y_1)) \, dy_1
= p^{-1}\psi(d\varphi_a(x_0) \Int_{\Z_p}\psi(dp y_1) \, dy_1,
$$
 by lemma \ref{lem2}.
 If $\ord(d) < -1,$ then the last integral is 
 zero by lemma \ref{lem:char(a)}.
\end{proof}
Coming back to the $u_2$ integral,

\begin{equation*}
\Int_{\mathbf{Z}_p^*} \psi(v_1(u_2^3p^{3U}c+u_2^2p^{2U}b)\,du_2 = H_1(v_1p^2Ub,p^Ucb^{-1}) \text{, provided $b \neq 0$}   
\end{equation*}

We know $u>0$, so $p^Ucb^{-1}$ is always divisible by p. By Lemma $\ref{lem9}$, $H_1(v_1p^2Ub,p^Ucb^{-1}) = H(v_1p^{2U}b)$. Since $|v_1|_p>p^{3U}$ and $b \in \mathbf{Z}_p^* \rightarrow |v_1p^{2U}b|_p = |v_1p^{2U}|_p > p^{3U}p^{-2U} = p^{U} > 1$. Therefore, $\ord(v_1p^{2U}b) < -1$ implies $H(v_1p^{2U}b) = 0$ and the integral vanishes. 

$\textbf{Note}$:  Jiang-Rallis, showed that $\mathbf{I}_{---+}$ has a nonzero solution in the case $b=0.$ 

Case 16 ($I_{----}$)

\begin{equation*}
    \mathbf{I}_{----} 
    = \Int_{\mathbf{Q}_p - \mathbf{Z}_p} \Int_{\mathbf{Q}_p - \mathbf{Z}_p} \Int_{\mathbf{Q}_p - \mathbf{Z}_p} \Int_{\mathbf{Q}_p - \mathbf{Z}_p} |v|_p^{-3s+1}  |u|_p^{-9s+4} |z|_p^{-6s+2} |y|^{-9s+3} 
\end{equation*}

\begin{equation*}
    \psi(vg(u) - (2u+3y)u^2z -\frac{(u+y)^3 z^2}v) 
\end{equation*}

\begin{equation*}
    \Int_{\mathbf{Q}_p} f_s(n^-(-x))\psi(v^{-1}xy^3z) \,dx \,du \,dv \,dy \,dz
\end{equation*}
where $g(u) = c+bu-u^3.$

If $v^{-1}y^3z \notin \mathbf{Z}_p$ the $x$ integral vanishes. 
If not then $z$ lies in 
$v^{-1}y^3 \Z_p-\Z_p.$ Notice that this is only nonempty  
if $3 \ord(y) > \ord(v),$ i.e. $v \in \Q_p -y^3 \Z_p.$
By Lemma \ref{lem1}, 
\begin{equation*}
    \mathbf{I}_{----} = \frac{1-p^{-3s}}{1-p^{-3s+1}} \Int_{\mathbf{Q}_p - \mathbf{Z}_p} \Int_{\mathbf{Q}_p - \mathbf{Z}_p} \Int_{\mathbf{Q}_p - y^3\mathbf{Z}_p} \Int_{\frac{v}{y^3}\mathbf{Z}_p - \mathbf{Z}_p} |v|_p^{-3s+1}  |u|_p^{-9s+4} |z|_p^{-6s+2} |y|^{-9s+3} 
    (1-|v^{-1}y^3 z|^{3s-1}p^{-3s+1})
\end{equation*}
\begin{equation*}
     \psi(vg(u) - (2u+3y)u^2z -\frac{(u+y)^3 z^2}v)  \,dz \,dv \,dy \,du.
\end{equation*}
Let $z = vr.$ By Lemma \ref{lem2}, $dz = |v| dr,$
and
\begin{equation*}
    \mathbf{I}_{----} = \frac{1-p^{-3s}}{1-p^{-3s+1}} \Int_{\mathbf{Q}_p - \mathbf{Z}_p} \Int_{\mathbf{Q}_p - \mathbf{Z}_p} \Int_{\mathbf{Q}_p - y^3\mathbf{Z}_p} \Int_{y^{-3}\mathbf{Z}_p - v^{-1}\mathbf{Z}_p} |v|_p^{-9s+4}  |u|_p^{-9s+4} |r|_p^{-6s+2} |y|^{-9s+3} 
    (1-|y^3 r|^{3s-1}p^{-3s+1})
\end{equation*}
\begin{equation*}
     \psi(v[g(u) - (2u+3y)u^2r -(u+y)^3 r^2])  \,dr \,dv \,dy \,du.
\end{equation*}

\begin{lem}\label{lem:minord}
    For all $u,y,r$ in the domain of integration, the order of $g(u) - (2u+3y)u^2r -(u+y)^3 r^2$ is equal to 
    $3 \ord(u).$ In particular, it is at most $-3.$
\end{lem}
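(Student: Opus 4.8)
The plan is to mimic the argument used for Lemma~\ref{lem8}, taking advantage of the constraints on the domain of integration recorded just above. Recall that $u$ ranges over $\mathbf{Q}_p-\mathbf{Z}_p$, so $\ord(u)<0$; $y$ ranges over $\mathbf{Q}_p-\mathbf{Z}_p$, but the constraint $v\in\mathbf{Q}_p-y^3\mathbf{Z}_p$ forces $3\ord(y)>\ord(v)$; and after the substitution $z=vr$ the variable $r$ ranges over $y^{-3}\mathbf{Z}_p-v^{-1}\mathbf{Z}_p$, so $\ord(r)\ge -3\ord(y)$ and $\ord(r)<-\ord(v)$. Since $b,c\in\mathbf{Z}_p^\times$ and $g$ is irreducible mod $p$, for $u\in\mathbf{Z}_p$ one has $\ord(g(u))=0$, but here $\ord(u)<0$ so the strong triangle inequality gives $\ord(g(u))=\ord(-u^3)=3\ord(u)$. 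The claim is that $3\ord(u)$ is strictly smaller than the order of each of the other two summands $-(2u+3y)u^2r$ and $-(u+y)^3r^2$, so that by the strong triangle inequality the order of the whole expression is exactly $3\ord(u)$.

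First I would bound $\ord(r)$ from below in terms of $\ord(u)$. We have $\ord(r)\ge -3\ord(y)$, and I claim $\ord(y)>\ord(u)$: indeed if $\ord(y)\le\ord(u)<0$ then, since $3\ord(y)>\ord(v)$ would have to be consistent with $\ord(r)<-\ord(v)\le -3\ord(y)$, one gets $\ord(r)<-3\ord(y)$, contradicting $\ord(r)\ge -3\ord(y)$; so $\ord(y)>\ord(u)$, whence $\ord(r)\ge -3\ord(y)>-3\ord(u)$, i.e. $\ord(r)>-3\ord(u)$. Actually the cleaner route is: from $3\ord(y)>\ord(v)$ and $\ord(r)<-\ord(v)$ we get $\ord(r)<-\ord(v)<-3\ord(y)+1$ is the wrong direction — so I should instead combine $\ord(r)\ge-3\ord(y)$ with a comparison of $\ord(y)$ and $\ord(u)$ directly from the domain, exactly as in the proof of Lemma~\ref{lem8}. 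Either way, the key inequalities to extract are $\ord(y)>\ord(u)$ and $\ord(r)>-3\ord(u)$ (equivalently $\ord(u^3 r)>0\ge$ anything relevant), possibly after splitting on whether $\ord(y)<0$ or $\ord(y)\ge 0$.

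Next I would estimate the two non-leading summands. For the middle term, $\ord\bigl((2u+3y)u^2 r\bigr)$: since $\ord(2)=\ord(3)=0$ and $\ord(y)>\ord(u)$, we have $\ord(2u+3y)=\ord(u)$, so this term has order $\ord(u)+2\ord(u)+\ord(r)=3\ord(u)+\ord(r)>3\ord(u)$ because $\ord(r)>-3\ord(u)\ge 0>\ldots$ — more carefully, $\ord(r)>-3\ord(u)$ already gives $3\ord(u)+\ord(r)>0>3\ord(u)$ when $\ord(u)<0$; but even the weaker bound $\ord(r)\ge -3\ord(y)$ together with $\ord(y)>\ord(u)$ suffices, since then $3\ord(u)+\ord(r)\ge 3\ord(u)-3\ord(y)$, and I want strict inequality over $3\ord(u)$, i.e. $\ord(r)>0$; if instead $\ord(r)$ could be negative one uses that $\ord(r)+3\ord(u)>3\ord(u)$ iff $\ord(r)>0$, which forces me to verify $\ord(r)\ge 0$ holds in the domain — this is where I must be careful, and I will fall back on the explicit domain description $r\in y^{-3}\mathbf{Z}_p$ with $\ord(y)$ possibly negative. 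For the last term, $\ord\bigl((u+y)^3 r^2\bigr)=3\ord(u+y)+2\ord(r)=3\ord(u)+2\ord(r)$ (using $\ord(u+y)=\ord(u)$), and I need $2\ord(r)>0$, i.e. the same condition $\ord(r)>0$.

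The main obstacle is precisely pinning down the sign of $\ord(r)$ from the domain of integration, since the domain $y^{-3}\mathbf{Z}_p-v^{-1}\mathbf{Z}_p$ for $r$ allows negative order when $\ord(y)<0$. I expect the resolution to parallel Lemma~\ref{lem8}: split into the case $\ord(y)\ge 0$ (where $\ord(r)\ge -3\ord(y)\ge 0$ is immediate, and also $\ord(y)\ge 0>\ord(u)$) and the case $\ord(y)<0$; in the latter case one must show $\ord(y)>\ord(u)$ still, and that $\ord(u^3 r)>0$ and $\ord(u^3 r^2)>0$ — these follow from carefully chaining $\ord(r)\ge -3\ord(y)$, $3\ord(y)>\ord(v)$, $\ord(r)<-\ord(v)$, and $\ord(u)<0$, perhaps with $\ord(u)<\ord(y)<0$ forcing $\ord(u^2 r)$ and $\ord(u^3 r)$ positive. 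Once $\ord(g(u))=3\ord(u)$ is strictly smallest, the strong triangle inequality (stated in the $p$-adic preliminaries) gives that the order of the sum equals $3\ord(u)\le -3$, which is the assertion. The point of the lemma, as with Lemma~\ref{lem8}, is that this forces the argument of $\psi$ inside the $v$-integral to lie outside $p^{-1}\mathbf{Z}_p$ for all relevant $v$, so that by Corollary~\ref{cor1} and Lemma~\ref{lem3} the integral $\mathbf{I}_{----}$ vanishes.
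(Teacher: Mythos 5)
Your proposal does not reach a correct proof, and a couple of the specific claims in it are wrong.

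First, you assert that ``the domain $y^{-3}\mathbf{Z}_p-v^{-1}\mathbf{Z}_p$ for $r$ allows negative order when $\ord(y)<0$.'' The inequality goes the other way: $r\in y^{-3}\mathbf{Z}_p$ means $\ord(r)\ge -3\ord(y)$, and since $y\in\mathbf{Q}_p-\mathbf{Z}_p$ forces $\ord(y)\le -1$, this lower bound is $-3\ord(y)\ge 3$. So in fact $\ord(r)\ge 3>0$ for \emph{every} $r$ in the domain; there is no sign issue at all, and the case split on $\ord(y)\ge 0$ versus $\ord(y)<0$ that you propose is vacuous (the first case never occurs, since $y\notin\mathbf{Z}_p$).

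Second, your attempted derivation that $\ord(y)>\ord(u)$ always holds is incorrect and cannot be repaired. The constraint on $v$ is $\ord(v)<3\ord(y)$, i.e.\ $-\ord(v)>-3\ord(y)$, not $\le$. So the chain $\ord(r)<-\ord(v)\le -3\ord(y)$ that you use to produce a contradiction is false, and no contradiction ensues: both orderings of $\ord(u)$ and $\ord(y)$ genuinely occur in the domain. The paper's proof therefore splits into two cases according to the sign of $\ord(u)-\ord(y)$. In the case $\ord(u)\le\ord(y)$, one has $\ord(2u+3y)\ge\ord(u)$ and $\ord(u+y)\ge\ord(u)$, so the two non-leading summands have order $\ge 3\ord(u)+\ord(r)>3\ord(u)$ (using $\ord(r)\ge3$). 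In the case $\ord(y)<\ord(u)$, one has $\ord(2u+3y)=\ord(u+y)=\ord(y)$, and then $\ord((2u+3y)u^2r)=\ord(y)+2\ord(u)+\ord(r)>3\ord(y)+\ord(r)=\ord(y^3r)\ge 0>3\ord(u)$, and similarly $\ord((u+y)^3r^2)=\ord(y^3r^2)\ge\ord(r)>0>3\ord(u)$. Your proposal correctly identifies $\ord(g(u))=3\ord(u)$ and the overall strategy (strong triangle inequality, show the other two summands have strictly larger order), but the middle of the argument is not sound: fix the lower bound on $\ord(r)$, drop the false claim $\ord(y)>\ord(u)$, and replace your case split on the sign of $\ord(y)$ by the case split on the sign of $\ord(u)-\ord(y)$.
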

\begin{proof}
Since $\ord(u) < 0$ it follows from the strong triangle inequality that 
$\ord(g(u)) = \ord(u^3) = 3 \ord(u).$ We claim that this is the minimal 
order among the three summands. 

    First suppose that $\ord(u) \le \ord(y).$  Then $\ord(u^2(3u+3y))$
and $\ord(u+y)^3$ are at least $\ord(u^3).$ But $\ord(r) \ge \ord(y^{-3})
\ge 3,$ so $\ord(g(u))$ is indeed strictly smaller than the other 
two, and hence the order to the sum.

Now suppose that $\ord(y) < \ord(u).$ Then $\ord(u+y) = \ord(2u+3y) = \ord(y).$ Then 
$$
\ord((2u+3y) u^2 r) = \ord(yu^2 r) > \ord(y^3r) \ge 0 > \ord(u^3),
$$
and 
$$\ord((u+y)^3 r^2) 
= \ord(y^3 r^2) \ge \ord(r) > 0 > \ord(u^3).$$
Once again, the order of the first summand is strictly smaller than 
the orders of the other two, and that means it is the order of the 
sum.
\end{proof}
In view of Lemma \ref{lem:minord}, it follows from corollary
\ref{cor1} and Lemma \ref{lem3} that $I_{----}$ vanishes. 

\section{Summary of Results}

Now that we have the result of all 16 cases we can deduce the following propositions.


\begin{prop}\label{prop1}

$$I_{+++-} = I_{++-+} = I_{++--} = I_{+-+-} = I_{+--+} = I_{+---} = 0,$$

$$I_{--++} = I_{--+-} = I_{----} = 0 \text{. (New Result)}$$
\end{prop}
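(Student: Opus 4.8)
The plan is to read off the proposition directly from the sixteen case computations of the previous section, organizing the nine vanishing sub-integrals into two families according to the mechanism that kills them. No new argument is needed beyond invoking, in each case, the lemma that was already applied there; the only task is to present this cleanly.

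First I would treat the six integrals $I_{+++-}, I_{++-+}, I_{++--}, I_{+-+-}, I_{+--+}, I_{+---}$ together. Each of these contains an innermost factor of the shape $\int_{\Q_p} f_s(n^-(-x))\,\psi(ax)\,dx$, where the multiplier $a$ is a monomial in variables that are being integrated over $\Q_p-\Z_p$ — namely $y^3$, $z$, or $y^3z$, as dictated by the subscript pattern. Since $|y|_p>1$ forces $|y^3|_p>1$ and $|z|_p>1$ forces $|y^3z|_p>1$, in every one of these cases $a\notin\Z_p$, so Lemma \ref{lem1} makes the $x$-integral vanish, and hence the entire sub-integral vanishes. (For $I_{+-+-}$, $I_{+--+}$, $I_{+---}$ the $x$-integral is literally the one already analyzed for $I_{+++-}$, $I_{++-+}$, $I_{++--}$.) These six are the cases recovered from Pleso.

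Next I would handle the three new cases $I_{--++}, I_{--+-}, I_{----}$. The first step is to apply Lemma \ref{lem1} to perform the $x$-integration, which confines the variable $z$ (equivalently, after the substitution $z=vr$ or $z=vz_1$, the rescaled variable) to a small ball; the second step is the change of variables $y=vy_1$, $z=vz_1$ (or $z=vr$) carried out in Cases 13, 14, 16. After these reductions the argument of $\psi$ is $v$ times the polynomial expression studied in Lemma \ref{lem8} (for $I_{--++}$ and $I_{--+-}$) or Lemma \ref{lem:minord} (for $I_{----}$). The crucial point is that throughout the remaining domain that expression has order exactly $3\,\ord(u)\le -3$; therefore, applying Corollary \ref{cor1} followed by Lemma \ref{lem3} in the outer variable produces characteristic functions $\mathbbm{1}_{\Z_p}$ evaluated at quantities of strictly negative order, all of which vanish. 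Hence $I_{--++}=I_{--+-}=I_{----}=0$.

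The genuinely delicate ingredient is not the bookkeeping of this proposition but the order estimates in Lemmas \ref{lem8} and \ref{lem:minord}, whose proofs require splitting into the regimes $\ord(u)\le\ord(y)$ and $\ord(y)<\ord(u)$ because the relevant polynomial factors differently in the two; however, those lemmas are already established in the case computations above, so at the level of the proposition the proof is simply a matter of citing them.
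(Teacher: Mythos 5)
Your organization of the nine vanishing sub-integrals into the two mechanisms (multiplier $a\notin\Z_p$ killing the inner $x$-integral for the six $I_{+\ast\ast\ast}$ cases, and the order estimates of Lemmas \ref{lem8} and \ref{lem:minord} combined with Corollary \ref{cor1} and Lemma \ref{lem3} for the three $I_{--\ast\ast}$ cases) is exactly how the paper establishes this proposition, and the argument is sound. The only minor imprecision is your description of the $x$-integration as \emph{confining} $z$ in Cases 13 and 14: in $I_{--++}$ the multiplier $v^{-1}$ lies in $\Z_p$ automatically so no constraint arises, and in $I_{--+-}$ the constraint falls on $v$ rather than $z$; the smallness of the rescaled $z$-domain in those two cases comes from the substitution $z=vz_1$ rather than from Lemma \ref{lem1} — but this does not affect the validity of the vanishing argument.
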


\begin{prop}\label{prop2}
If $g$ is irreducible mod p, $p > 3$, then $I_{-++-} = I_{-+--} = 0.$
\end{prop}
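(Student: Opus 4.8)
\emph{Proof plan.} Both identities are, in effect, already contained in the computations of the sub-integrals $I_{-++-}$ and $I_{-+--}$ carried out above; the proof of the proposition will simply isolate the step at which irreducibility of $g$ modulo $p$ forces the relevant integrand to vanish. In each case the starting point is the expression obtained once Lemma \ref{lem1} has been applied to the integral in the variable conjugate to $x$, so that a divisibility constraint (``$v^{-1}y^{3}\in\Z_p$'' for $I_{-++-}$, resp.\ ``$v^{-1}y^{3}z\in\Z_p$'' for $I_{-+--}$) has entered the domain of integration. The plan is then to run the same reduction used for the other vanishing cases: substitute as appropriate (e.g.\ $z=rv$ in $I_{-+--}$), write $v=p^{V}\nu$ and $y=p^{Y}\gamma$ with $\nu,\gamma\in\Z_p^{\times}$, apply Corollary \ref{cor1} to the resulting $v$-integral (and, for $I_{-++-}$, also to the $y$-integral), and then Lemma \ref{lem3} to the $\nu$-integral. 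After these steps each summand carries a factor $\mathbbm{1}_{\Z_p}(p^{V}G)-p^{-1}\mathbbm{1}_{\Z_p}(p^{V+1}G)$, where $G$ is the polynomial sitting inside $\psi$ and the summation index $V$ is strictly negative throughout the range: $V\le 3Y\le -3$ for $I_{-++-}$, and $V\le 3Y-1\le -4$ for $I_{-+--}$.

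The decisive input is that an irreducible cubic over $\Z/p\Z$ has no root, so $\bar g$ does not vanish on $\Z/p\Z$ and $g(u)\in\Z_p^{\times}$ for every $u\in\Z_p$ -- the fact already recorded in the treatment of $I_{-++-}$. For $I_{-++-}$ we have $G=g(u)$, so $\ord(p^{V}g(u))=V<0$ and $\ord(p^{V+1}g(u))=V+1<0$; a nonzero $p$-adic number of negative order lies outside $\Z_p$, so both characteristic functions vanish identically, the factor is $0$, and $I_{-++-}=0$. For $I_{-+--}$ the polynomial is $G=g(u)+r-3up^{Y}\gamma r-(p^{Y}\gamma)^{3}r^{2}$, and here I would use the domain constraints to check the three correction terms lie in $p\Z_p$: the bound $|r|_p\le|y|_p^{-3}=p^{3Y}$ gives $\ord(r)\ge -3Y\ge 3$, and (using $p>3$, so that $3\in\Z_p^{\times}$) $\ord(3up^{Y}\gamma r)\ge Y+\ord(r)\ge -2Y\ge 2$ and $\ord((p^{Y}\gamma)^{3}r^{2})=3Y+2\ord(r)\ge -3Y\ge 3$. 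Hence $G\equiv g(u)\pmod p$, so $G\in\Z_p^{\times}$ for every choice of the remaining variables, and exactly as before $\ord(p^{V}G)=V<0$ and $\ord(p^{V+1}G)=V+1<0$; the factor $\mathbbm{1}_{\Z_p}(p^{V}G)-p^{-1}\mathbbm{1}_{\Z_p}(p^{V+1}G)$ vanishes identically and $I_{-+--}=0$.

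The main obstacle is the valuation bookkeeping of the second paragraph: one has to verify that the constraint handed over by Lemma \ref{lem1}, together with the ranges of the indices $V$ and $Y$ supplied by Corollary \ref{cor1}, do push every term of $G$ other than $g(u)$ into $p\Z_p$, uniformly over the whole domain. I expect nothing sharper than the strong triangle inequality to be needed; once the orders above are confirmed the conclusion is immediate, since $\mathbbm{1}_{\Z_p}$ applied to a $p$-adic number of negative order is $0$, so the integrand vanishes term by term and the integrals are zero.
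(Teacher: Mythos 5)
Your proof is correct and follows essentially the same route as the paper: the proposition is not given a separate proof in the summary section, but is simply a restatement of the vanishing arguments in Cases 10 and 12, and your isolation of the key step—irreducibility of $\bar g$ forces $g(u)\in\Z_p^\times$ for all $u\in\Z_p$, while the index range from Corollary \ref{cor1} keeps $V+1<0$, so $\mathbbm{1}_{\Z_p}(p^V G)=\mathbbm{1}_{\Z_p}(p^{V+1}G)=0$—matches the paper's reasoning exactly. The one place you add value is in the $I_{-+--}$ case: the paper merely asserts that the argument of $\psi$ lies outside $p^{-1}\Z_p$ without spelling out why the correction terms $-3up^Y\gamma r$ and $-(p^Y\gamma)^3 r^2$ lie in $p\Z_p$, whereas you carry out the valuation bookkeeping using the constraint $|r|_p\le p^{3Y}$ handed over by Lemma \ref{lem1} (equivalently $\ord(r)\ge -3Y\ge 3$) to conclude $G\equiv g(u)\pmod p$; this is the right computation and fills a small gap in the paper's exposition.
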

\begin{prop}\label{prop3} If $g$ is irreducible mod p, $p \equiv 2 \mod 3$, then 
$I_{---+} = 0 \text{. (New Result)}$
\end{prop}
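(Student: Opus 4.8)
The plan is to establish the assertion that $I_{---+}=0$ by carrying the computation of $I_{---+}$ (Case 15 above) through to the end; here I sketch the strategy and note where the hypotheses enter. First I would start from the explicit five-fold integral for the $(---+)$ piece and dispose of the inner $x$-integral using Lemma \ref{lem1}, which replaces $\int_{\mathbf{Q}_p} f_s(n^-(-x))\,\psi(v^{-1}zx)\,dx$ by $\frac{1-p^{-3s}}{1-p^{-3s+1}}\bigl(1-|v^{-1}z|_p^{3s-1}p^{-3s+1}\bigr)$ at the cost of the domain constraint $z\in v\mathbf{Z}_p$. Substituting $z=rv$ and collecting the surviving terms in the character turns the argument of $\psi$ into $v$ times the cubic polynomial $-u^3(r+1)^2-u^2\bigl(3yr(r+1)\bigr)+u(b-3y^2r^2)+(c-r^2y^3)$ in $u$.

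The next step is a case split on $\ord\bigl(u(r+1)\bigr)$. If $\ord\bigl(u(r+1)\bigr)<0$, the strong triangle inequality (using $y,r,b,c\in\mathbf{Z}_p$) shows this polynomial has order $3\ord(u)\le -3$, so Corollary \ref{cor1} together with Lemma \ref{lem3} makes that part vanish. In the complementary branch one has $\ord(r+1)>0$, which forces $\ord(r)=1$, and the earlier constraint $|r|_p>|v|_p^{-1}$ then reduces to $v\in\mathbf{Q}_p-\mathbf{Z}_p$ for all such $r$. A cascade of changes of variables --- $y_1=y/u$, then $v_1=vu^3$, then $u_1=u^{-1}$, and finally $u_1=p^Uu_2$ with $u_2\in\mathbf{Z}_p^\times$ and $U\ge 1$ --- collapses the argument of $\psi$ so that all dependence on $u_2$ appears through $\psi\bigl(v_1(p^{2U}b\,u_2^2+p^{3U}c\,u_2^3)\bigr)$; the inner $u_2$-integral is therefore $H_1(d,a)$ with $d=v_1p^{2U}b$ and $a=p^Ucb^{-1}$ (here one uses $b\ne 0$, which holds since $\sigma=(1,0,b,c)$ with $b\in\mathbf{Z}_p^\times$).

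Finally, since $U\ge 1$ we have $a=p^Ucb^{-1}\in p\mathbf{Z}_p$, so Lemma \ref{lem9} gives $H_1(d,a)=H(d)$; and since $|v_1|_p>p^{3U}$ while $b$ is a unit, $\ord(d)=\ord(v_1)+2U\le -U-1<-1$, whence Lemma \ref{lem9} also gives $H(d)=0$. Thus the $u_2$-integral vanishes, and with it $I_{---+}$. I do not expect a single deep obstacle; the main difficulty is the disciplined bookkeeping through the chain of substitutions --- in particular keeping the domain of $r$ (and the induced constraint $v\in\mathbf{Q}_p-\mathbf{Z}_p$) correct in the surviving branch, and observing that after the substitutions the quadratic and cubic $u_2$-terms appear in exactly the shape $d(x^2+ax^3)$ with $a\in p\mathbf{Z}_p$, which is precisely what lets Lemma \ref{lem9} strip the cubic term and then kill the resulting Gauss-type integral (note that Lemma \ref{lem9} requires $p\ne 2$). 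It is worth flagging that $b\ne 0$ is essential: as Jiang--Rallis observed, $I_{---+}$ is nonzero when $b=0$, so the argument must genuinely exploit $b\in\mathbf{Z}_p^\times$.
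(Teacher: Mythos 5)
Your proposal reproduces the paper's Case 15 computation of $I_{---+}$ essentially verbatim: the same disposal of the $x$-integral via Lemma \ref{lem1}, the same dichotomy on $\ord\bigl(u(r+1)\bigr)$, the same chain of substitutions $z=rv$, $y_1=y/u$, $v_1=vu^3$, $u_1=u^{-1}=p^U u_2$, and the same appeal to Lemma \ref{lem9} to annihilate the resulting $u_2$-integral, using exactly the two hypotheses $b\in\mathbf{Z}_p^\times$ (so $a=p^U c b^{-1}$ is defined) and $p\ne 2$. The only trifle, shared with the paper's own phrasing, is that in the first branch the relevant order is $3\ord(u)+2\ord(r+1)$ rather than $3\ord(u)$, but since $\ord(u)+\ord(r+1)\le -1$ and $\ord(u)\le -1$ the needed bound $\le -3$ still holds, so nothing is affected.
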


\begin{prop}\label{prop4}

$I_{+} = (1 - p^{-3s})(1+p^{-9s+2}).$ 
\end{prop}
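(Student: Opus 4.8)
The plan is to obtain $I_+$ by simply assembling the eight sub-integrals into which it was broken during the bifurcation process. Recall that $I_p^\sigma(s, f_{s,p}^\circ) = I_+ + I_-$, where $I_+$ is the portion in which $v$ is integrated over $\Z_p$, and that $I_+$ was then split according to whether $u \in \Z_p$ or $u \in \Q_p-\Z_p$, then on $z$, then on $y$. Hence $I_+$ is precisely the sum of the eight pieces whose first index is $+$:
$$
I_+ = I_{++++} + I_{+++-} + I_{++-+} + I_{++--} + I_{+-++} + I_{+-+-} + I_{+--+} + I_{+---}.
$$

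First I would invoke Cases 2, 3, 4, 6, 7, 8, which show $I_{+++-} = I_{++-+} = I_{++--} = I_{+-+-} = I_{+--+} = I_{+---} = 0$ (this is the first line of Proposition \ref{prop1}); in each of those cases the factor multiplying $f_s(n^-(-x))$ inside $\psi$ lies outside $\Z_p$, so the inner $x$-integral vanishes by Lemma \ref{lem1}. Only two terms survive, so $I_+ = I_{++++} + I_{+-++}$. Then I would substitute the values from Case 1 and Case 5, namely $I_{++++} = 1 - p^{-3s}$ and $I_{+-++} = (1-p^{-3s})p^{-9s+2}$, and factor to get
$$
I_+ = (1-p^{-3s}) + (1-p^{-3s})p^{-9s+2} = (1-p^{-3s})(1+p^{-9s+2}),
$$
as claimed.

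The argument is essentially bookkeeping; the substantive work has already been carried out in the case-by-case computations. The only ingredient that requires real input is Case 5, whose evaluation rests on Proposition \ref{prop0} (the count of solutions of the norm form $\psi_1$ over $\Z/p^k\Z$, which is where the hypothesis $p \equiv 5 \pmod 6$ enters). The main ``obstacle,'' such as it is, is merely to confirm that the labelling of the sub-integrals agrees with the order $v, u, z, y$ in which the variables were bifurcated, so that exactly the eight listed terms — and no others — sum to $I_+$.
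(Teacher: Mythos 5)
Your proof matches the paper's: both decompose $I_+$ into the eight sub-integrals $I_{+***}$, invoke the vanishing results (the first line of Proposition \ref{prop1}, coming from Cases 2, 3, 4, 6, 7, 8) to reduce to $I_{++++} + I_{+-++}$, then substitute the values from Cases 1 and 5 and factor. The extra discussion of why the six cases vanish and where the $p\equiv 5 \pmod 6$ hypothesis enters is correct but already established elsewhere in the paper, so the bookkeeping you give is exactly the intended argument.
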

\begin{proof}We know that $I_{+} = I_{++++} + I_{+-++} + I_{+++-} + I_{++-+} + I_{++--} + I_{+--+} + I_{+-+-}  + I_{+---}$. By Proposition \ref{prop1}, $I_{+} = I_{++++} + I_{+-++}$. Then,

$I_{+} = (1 - p^{-3s})+(1-p^{-3s})p^{-9s+2} = (1 - p^{-3s})(1+p^{-9s+2}). $ 
\end{proof}
\begin{prop}\label{prop5}
If $g$ is irreducible mod $p,$ then\\ $I_{-} = (1-p^{-3s})(-p^{-3s+1}-p^{-6s+2}+p^{-9s+3}-p^{-9s+2}).$ (New result)
\end{prop}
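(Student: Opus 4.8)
The plan is to expand $I_-$ as the sum of the eight sub-integrals with first index $-$ that were computed in the previous section,
$$I_- = I_{-+++} + I_{-++-} + I_{-+-+} + I_{-+--} + I_{--++} + I_{--+-} + I_{---+} + I_{----},$$
and then discard the vanishing terms. First I would invoke Proposition \ref{prop1} to get $I_{--++} = I_{--+-} = I_{----} = 0$. Since $p \equiv 5 \pmod 6$ forces $p > 3$ and $p \equiv 2 \pmod 3$, I can then apply Proposition \ref{prop2} to conclude $I_{-++-} = I_{-+--} = 0$ and Proposition \ref{prop3} to conclude $I_{---+} = 0$. This leaves only $I_- = I_{-+++} + I_{-+-+}$.

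Next I would quote the two surviving evaluations. From Case 9, irreducibility of $g$ gives $N(b,c) = 0$, so
$$I_{-+++} = -p^{-3s+1}(1-p^{-3s})(1+p^{-3s+1}) = (1-p^{-3s})\bigl(-p^{-3s+1} - p^{-6s+2}\bigr).$$
From Case 11, under Conjecture \ref{conj1} (equivalently Conjecture \ref{conj0}) we have $N(-1) = p^2 - 1$, whence
$$I_{-+-+} = (1-p^{-3s})\bigl(p^{-9s+3} - p^{-9s+2}\bigr).$$
Adding these and factoring out $(1-p^{-3s})$ yields
$$I_- = (1-p^{-3s})\bigl(-p^{-3s+1} - p^{-6s+2} + p^{-9s+3} - p^{-9s+2}\bigr),$$
which is the asserted identity.

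This is essentially a bookkeeping argument, so there is no serious obstacle; the one point worth care is the inventory of hypotheses actually used. The vanishing of $I_{-+--}$ needs $p > 3$, that of $I_{---+}$ needs $p \equiv 2 \pmod 3$, and the closed form for $I_{-+-+}$ needs Conjecture \ref{conj1} --- all of which hold under the standing assumptions $p \equiv 5 \pmod 6$ and Conjecture \ref{conj0} of the main theorem, even though they are not spelled out in the statement of the proposition. If one wished to avoid Conjecture \ref{conj1}, the remark following Case 11 expresses $I_{-+-+}$ in terms of $N(-1)$, and the same computation would give $I_-$ with $p^{-9s+3} - p^{-9s+2}$ replaced by $p^{-9s+3}\bigl(1 - p + N(-1)/p\bigr)$.
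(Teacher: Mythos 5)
Your proof is correct and follows the same bookkeeping route as the paper: decompose $I_-$ into eight sub-integrals, discard the vanishing ones, and add the two survivors. In fact you are slightly more careful than the paper's own proof, which cites only Propositions \ref{prop1} and \ref{prop3} and overlooks that Proposition \ref{prop2} is also needed to kill $I_{-++-}$ and $I_{-+--}$; you also correctly flag that the stated value of $I_{-+-+}$, and hence the proposition, tacitly depends on Conjecture \ref{conj1} (and on $p \equiv 5 \pmod 6$), hypotheses the proposition's statement does not spell out.
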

\begin{proof}
We know that $I_{-} = I_{----} + I_{-+--} + I_{---+} + I_{--+-} + I_{--++} + I_{-++-} + I_{-+-+}  + I_{-+++}$. By Proposition \ref{prop1} and \ref{prop3}, $I_{-} = I_{-+++} + I_{-+-+}$. Assuming $g$ is irreducible mod $p$ makes $N(b,c) = 0$ in $I_{-+++}$. Therefore,
$I_{-} = (1 - p^{-3s})(-p^{-3s+1}-p^{-6s+2}) + (1-p^{-3s})(p^{-9s+3}-p^{-9s+2}) = (1-p^{-3s})(-p^{-3s+1}-p^{-6s+2}+p^{-9s+3}-p^{-9s+2}).$ 
\end{proof}
\begin{thm}[Main Theorem]
If Conjecture \ref{conj1} is valid in $I_{-+-+}$, then
$$I^{\sigma}_p(s, f_{s,p}^\circ) = (1-p^{-3s})(1-p^{-3s+1})(1-p^{-6s+2}).$$
\end{thm}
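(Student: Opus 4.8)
The plan is to assemble the Main Theorem from the two halves $I_+$ and $I_-$ that were isolated in the bifurcation of Jiang and Rallis. Recall that, after writing $w_0 n(x,y,z,u,v) w_0^{-1} = n^-(-x,y,z,u,-v)$ and splitting the outermost $v$-integral according to whether $v\in\Z_p$ or $v\in\Q_p-\Z_p$, one has $I^\sigma_p(s,f_{s,p}^\circ)=I_++I_-$, and then each of $I_\pm$ decomposes further into eight of the sixteen sub-integrals $I_{\pm\pm\pm\pm}$. The first step is therefore simply to quote Proposition \ref{prop4}, which gives $I_+ = (1-p^{-3s})(1+p^{-9s+2})$, and Proposition \ref{prop5}, which (using Conjecture \ref{conj1} in the evaluation of $I_{-+-+}$, and using irreducibility of $g$ mod $p$ so that $N(b,c)=0$ in $I_{-+++}$) gives $I_- = (1-p^{-3s})(-p^{-3s+1}-p^{-6s+2}+p^{-9s+3}-p^{-9s+2})$.

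The second step is the arithmetic. Adding,
\begin{equation*}
I^\sigma_p(s,f_{s,p}^\circ) = (1-p^{-3s})\bigl(1+p^{-9s+2}-p^{-3s+1}-p^{-6s+2}+p^{-9s+3}-p^{-9s+2}\bigr).
\end{equation*}
The two terms $\pm p^{-9s+2}$ cancel, leaving the factor $1-p^{-3s+1}-p^{-6s+2}+p^{-9s+3}$. One then recognizes this as $(1-p^{-3s+1})(1-p^{-6s+2})$, since expanding the latter product gives exactly $1-p^{-3s+1}-p^{-6s+2}+p^{-9s+3}$. Hence $I^\sigma_p(s,f_{s,p}^\circ)=(1-p^{-3s})(1-p^{-3s+1})(1-p^{-6s+2})$, as claimed.

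At the level of this theorem there is essentially no obstacle: it is a bookkeeping identity. The real content, of course, is pushed into Propositions \ref{prop1}--\ref{prop5}, i.e.\ into the sixteen sub-integral computations of the previous section. Among these the genuinely hard point is the evaluation of $I_{-+-+}$, which is where the counting Conjecture \ref{conj1} (equivalently Conjecture \ref{conj0}) enters; it is also worth noting, as remarked after Case 11, that even without the conjecture one gets $\mathbf{I}_{-+-+}=(1-p^{-3s})\,p^{-9s+3}\bigl(1-p+\tfrac{N(-1)}{p}\bigr)$, so that the Main Theorem holds unconditionally once $N(-1)=p^2-1$ is known — which in turn follows from Xiong's result. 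Thus the only step in the present argument that is not elementary is the one we have isolated as a conjecture, and the Main Theorem is exactly what remains after carrying out the elementary parts.
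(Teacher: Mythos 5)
Your proposal is correct and follows exactly the same route as the paper: quote Propositions \ref{prop4} and \ref{prop5} for $I_+$ and $I_-$, add, cancel the $\pm p^{-9s+2}$ terms, and factor $1-p^{-3s+1}-p^{-6s+2}+p^{-9s+3}$ as $(1-p^{-3s+1})(1-p^{-6s+2})$. The supplementary remark about $N(-1)$ and Xiong's result matches the discussion already present in the paper after Case 11.
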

\begin{proof}
$I^{\sigma}_p(s, f_{s,p}^\circ) = I_{+} + I_{-} = (1 - p^{-3s})(1+p^{-9s+2}-p^{-3s+1}-p^{-6s+2}+p^{-9s+3}-p^{-9s+2}) = (1-p^{-3s})((1-p^{-6s+2})-p^{3s+1}(1-p^{-6s+2})) = (1-p^{-3s})(1-p^{-3s+1})(1-p^{-6s+2}).$ 
\end{proof}

\section{Directions for future work}
There are a number of natural directions for future work. 
As we have mentioned, Conjecture \ref{conj0} may actually be deduced
from the result of Xiong and the results of this paper. But, the elementary
nature of the statement suggests that it should have an elementary proof which, together with the results of this paper, would give a second 
proof of a special case of Xiong's result. Thus, giving a proof of 
Conjecture \ref{conj0} which does not depend on Xiong's result is a 
natural direction. 

Another natural direction is to generalize the result to the 
context of an arbitrary non-Archimedean local field $F$ of characteristic
zero, which contains only one cube root of $1.$  

One could also try to relax the assumption that all data is unramified. 
For example, we have assumed that $b$ and $c$ are in $\Z_p^*$ because, 
for any $b,c \in \Q^*,$ the set of primes $p$ such that $b$ and $c$ 
are not in $\Z_p^*$ is finite. Nevertheless, if we hope to get the 
correct Euler factor at every place, we must handle the finite set of 
exceptional primes.

As noted
before, the local integral 
$I_{v}^\sigma( s, f_{s,v})$ of Jiang and Rallis was defined in 
the context of an arbitrary local field of characteristic zero. 
This includes the two Archimedean local fields, $\R$ and $\C.$ As far as we know the Archimedean zeta integrals have not been considered. One would 
expect that they would be related to the Gamma factors which appear
in the functional equations of the zeta functions obtained from the
non-Archimedean contributions. 

Finally, it is worth noting as Jiang and Rallis did, that the exceptional groups $F_4$ and $E_8$ have unipotent subgroups, analogous to our group $N$, but with field extensions of degree $4$ (in the $F_4$ case)or $5$ (in the $E_8$ case) playing the role that field extensions of degree $3$ have played here. The quintic case is particularly tantalizing because this would be the first time an extension with a non-solvable Galois group could appear. It is also noteworthy that Xiong's approach is based on the ``Exceptional $\Theta$ correspondence'' of Magaard-Savin \cite{Magaard-Savin}, and an embedding $G_2 \times PGL_3$ into the exceptional group 
$E_6.$ It is reasonable to hope that it may extend to the $F_4$ case. The exceptional $\Theta$ correspondence 
also includes an  embedding of $G_2 \times F_4$
into $E_8$ \cite{Magaard-Savin}. But it's not clear how to embed a product of the form $E_8 \times H$ into something even bigger in a similar way. With that being said, it must be noted that, if $N'$ and $N''$ are the analogues of $N$ in $F_4$ and $E_8$ respectively, then $\dim N' = 20$ and $\dim N''=104.$ Thus, the method of repeated bifurcation which produced $16=2^4$ sub-integrals from an 
integral over $5$ dimensional $N,$ would result in 
$2^{19}$ or $2^{103}$ sub-integrals in these cases. Thus, any 
feasible approach along these lines would require automating the 
analysis of most of the integrals. 
 
\section{Acknowledgement}

We would like to acknowledge the Alfred P. Sloan Foundation (SLOAN) for funding the project initially as a summer research in 2023 (G2021-17076).

\printbibliography 
\section{Appendix: Proof of Conjecture 2}
by Victor Scharaschkin,


\renewcommand{\baselinestretch}{1.15}
\setlength{\parindent}{0mm} 
\setlength{\parskip}{3mm}

\renewcommand{\theenumi}{\alph{enumi}}
\renewcommand{\labelenumi}{(\theenumi)}
\renewcommand{\theenumii}{\roman{enumii}}
\renewcommand{\labelenumii}{(\theenumii)}
\theoremstyle{definition} 
\newtheorem{Thm}{Theorem} 
\newcommand{\N}{\mathbb{N}} \newcommand{\No}{\mathbb{N}_0} 
\renewcommand{\Z}{\mathbb{Z}}
\renewcommand{\Q}{\mathbb{Q}}
 \renewcommand{\R}{\mathbb{R}}
\renewcommand{\C}{\mathbb{C}}
\newcommand{\Proof}{\noindent\textbf{Proof}\quad}
\newcommand{\forwards}{$(\Longrightarrow)\quad$} %
\newcommand{\backwards}{$(\Longleftarrow) \quad$}
\newcommand{\lhs}{\textsc{lhs}}
\newcommand{\rhs}{\textsc{rhs}}
\newcommand{\from}{\colon}
\newcommand{\legendre}[2]{\Bigl(\begin{array}{c} \!\!\!\frac{#1}{#2}\!\!\! \end{array} \Bigr)}
\newcommand{\vs}{\vphantom{\int^2}}
\newcommand{\IFF}{\textsc{iff}}

\newcommand{\Fp}{\mathbb{F}_p}
\newcommand{\Fpx}{\mathbb{F}_p^{\times}}
\newcommand{\FF}{\mathbb{F}}

\newcommand{\fon}{f_1}

We prove Conjecture~2.  See also Conjecture~34 and the preceding text for an explanation of how the polynomial~$\fon$ arises.

We make a slight change of notation: put $r=v$ and $y=w$.

Let $p\equiv 5\pmod{6}$ be prime, and let $b$, $c \in \Fp$.  Define
\[
g(z) = z^3-bz-c,\qquad \fon(u,v,w) = v^2w^3 +3uvw +u^3 -bu -v -c
\]
\[
X_1 =\bigl\{(u,v,w) \mid \fon(u,v,w)=0\bigr\}. %
\]
Assume $g$ is irreducible over~$\Fp$ from now on.  We prove:
\begin{thm}\label{Thm-Conj2}
The affine variety $X_1$ is a rational surface, and $|X_1(\Fp)| =p^2-1$.
\end{thm}

Observe that as immediate consequences of our assumptions on $p$ and $g$,\vspace{-3mm}
\begin{enumerate}
\item[(a)] $-3$ is not a square in $\Fp$.\vspace{1mm}
\item[(b)] The cubing map on $\Fp$ is a bijection.\vspace{1mm}
\item[(c)] $g(a)\neq 0$ for any $a\in \Fp$.\vspace{1mm}
\item[(d)] $b \neq 0$.
\end{enumerate}\vspace{-3mm}
The condition $p\equiv 5\pmod{6}$ is used in (a) and (b).  Part (d) follows from (b) since $x^3-c$ is always reducible.  

We give an elementary proof of Theorem~\ref{Thm-Conj2}.  This is a little ungainly, so collect the notation here for reference.  Let

\begin{eqnarray*}
\fon(u,v,w) &=& v^2w^3 +3uvw +u^3 -bu -v -c \\[-1mm] 
f_2(u,v,w) &=& w \\[-1mm] %
f_3(u,v,w) &=& \textstyle\frac{1}{v}\!\Bigl[ g\bigl(u +vw^2\,\bigr) -\fon\Bigr] \;\;=\;\; v^2w^6 +3uvw^4 +3u^2w^2 -vw^3 -3uw -bw^2 +1 \\[-1mm] 
f_4(u,v,w) &=& \;w^3\!\cdot g\bigl(u \textstyle-\frac{1}{w}\bigr) \;\;=\;\; u^3w^3 -3u^2w^2 -buw^3 -cw^3 +3uw +bw^2 -1 \\[3mm] %
h_1(x,y) &=& 3x^2 +y^2 -4b \\[-1mm] %
h_2(x,y) &=& -x^3 +xy^2 +4c \\[-1mm] %
h_3(x,y) &=& 8\,g\bigl(\textstyle \frac{y-x}{2}\bigr) =(y-x)^3 -4b(y-x) -8c \\ %
h_4(x,y) &=& -8\,g\bigl(\textstyle \frac{-y-x}{2}\bigr) =(x+y)^3 -4b(x+y) +8c %
\end{eqnarray*}\vspace{-5mm}

Check the following identities:
\begin{eqnarray}
4g(x) +h_2 &=& x\,h_1 \label{eqxh1} \\ %
f_3+f_4 &=& w^3\fon \label{eqf3f4}\\ %
bh_1^2h_2 +ch_1^3 -h_2^3 &=& -g(x)h_3h_4 \label{eqh1h2h3} \\ %
8g(x) +6h_2 +h_3  &=& h_4 \label{eqh4}\\ %
4g(x) +3h_2 + h_3&=& y\,h_1 \label{eqyh1}\\ %
g\bigl(u +vw^2\,\bigr) &=&  \fon +v\cdot f_3 \label{eqgf1vf3}\\ %
\textstyle w^2\!\cdot h_1\bigl(u-\frac{1}{w},\; 2vw^2 +3u -\frac{1}{w}\bigr) &=& 4f_3 \label{eqh1theta}  \\ %
\textstyle w^2\!\cdot h_2\bigl(u-\frac{1}{w},\; 2vw^2 +3u -\frac{1}{w}\bigr) &=& 4(uf_3 -w^2\fon) \label{eqw2h2} \\ %
\textstyle w^3\!\cdot g\bigl(u -\frac{1}{w}\bigr) &=& f_4 \label{eqw3g} %
\end{eqnarray}

For each $f_j$ and $h_j$ above, define sets (affine varieties)
\begin{eqnarray*}
X_j &=& \{(u,v,w) \in \Fp^3 \mid f_j(u,v,w) =0\} \\ %
X_{j,k} &=& X_j \cap X_k \\ %
Y_j &=& \{(x,y) \in \Fp^2 \mid h_j(x,y) =0\} %
\end{eqnarray*}

Since $f_3(u,v,0)=1$ we have
\[
X_{2,3}(\Fp)= \varnothing.
\]
Since $g$ never vanishes, equations~(\ref{eqgf1vf3}) and~(\ref{eqw3g}) show
\begin{equation}\label{eq-X13}
X_{1,3}(\Fp) =\varnothing = X_{1,4}(\Fp).
\end{equation}
also.

We shall define a bijection
\begin{equation}\label{eq-bijection}
X_1 \setminus X_{1,2} \to \Fp^2 \setminus Y_1.
\end{equation}
Equation~(\ref{eq-bijection}) implies
\begin{equation}\label{eq-Xinitial}
|X_1(\Fp)| = p^2 -|Y_1(\Fp)| +|X_{1,2}(\Fp)|.
\end{equation}

Clearly $\fon(u,v,w)=f_2(u,v,w)=0 \iff g(u)=v$, so for each~$u$ there is a unique~$v$ with $(u,v,w) \in X_{1,2}$, so
\[
|X_{1,2}|=p.
\]

Since $b\neq 0$, $Y_1$ is a (non-degenerate) conic.  The projective closure of $Y_1$ has $p+1$ points over~$\Fp$. (It always has at least one point by the Cauchy--Davenport theorem,\footnote{Let $Q=\{3x^2 \mid x \in \Fp\}$, $R=\{y^2 \mid y \in \Fp\}$.  Then $|Q|+|R|\geq p+1$ implies every element of $\Fp$ (in particular $4b$) is of the form $q+r$ for some $q\in Q$, $r\in R$.} and this gives a bijection with $\mathbb{P}^1$.)  Points at $\infty$ would only occur where $3X^2 +Y^2 -4bZ^2=0$ with $Z=0$, that is, when $Y^2=-3X^2$.  So condition (a) above implies
\[
|Y_1|=p+1
\]
also.  Thus from~(\ref{eq-Xinitial}) we obtain
\[
|X_1|  = p^2 -p-1 +p = p^2-1.
\]
It remains to establish~(\ref{eq-bijection}).

Define $\theta \from \Fp^3\setminus X_2 \to \Fp^2$ by
\begin{equation}\label{eq-deftheta}
\theta(u,v,w) = \Bigl(u \textstyle-\frac{1}{w},\; 2vw^2 +3u -\frac{1}{w}\Bigr).
\end{equation}
From~(\ref{eqh1theta})\quad  [$w\neq 0$ and $f_3(u,v,w)\neq 0$\,] $\implies$ [\;$\theta(u,v,w)$ is defined and $h_1\bigl( \theta(u,v,w )\bigr)\neq 0$\,].  Thus
\[
\theta \from \Fp^3\setminus (X_2 \cup X_3) \to \Fp^2 \setminus Y_1.
\]
From~$(\ref{eq-X13})$ $X_{1,3}=\varnothing$, so $\theta$ restricts to a map
\begin{equation}\label{eq-theta-res}
\theta \from X_1 \setminus X_{1,2} \to \Fp^2 \setminus Y_1.
\end{equation}

Define $\sigma \from \Fp^2 \setminus Y_1 \to \Fp^3 \setminus X_2$ by
\begin{equation}\label{eq-defsigma}
\sigma(x,y) = \biggl(\;\frac{h_2}{h_1},\; \frac{8g(x)^2h_3}{h_1^3}, \; -\frac{h_1}{4g(x)}\;\biggr).
\end{equation}
This is well defined off $Y_1$ since $g$ never vanishes, and $h_1 \neq 0$ implies $\sigma(x,y) \not\in X_2$.  We show that $(x,y) \in \Fp^2\setminus Y_1 \implies \fon \circ \sigma(x,y) =0$ so $\sigma(x,y) \in X_1$.  Writing $g$ for $g(x)$,
\begin{eqnarray*}
-h_1^3 \!\cdot (\fon\circ \sigma) &=& -h_1^3\cdot f_1\Bigl(\frac{h_2}{h_1}, \frac{8g^2h_3}{h_1^3\vs}, -\frac{h_1}{4g}\Bigr) \\ &=& -h_1^3 \cdot \frac{8g^2h_3}{h_1^3\vs}\biggl[ \frac{-h_3-6h_2-8g}{8g \vs}\biggr] -h_2^3+bh_1^2h_2+ch_1^3 \\ &=& gh_3\bigl[ h_3+6h_2+8g\bigr] -h_2^3+bh_1^2h_2+ch_1^3 \\  &\stackrel{(\ref{eqh4}),\; (\ref{eqh1h2h3})}{=}& 0.
\end{eqnarray*}

So
\begin{equation}\label{eq-sigma-res}
\sigma \from \Fp^2\setminus Y_1 \to X_1 \setminus X_{1,2}.
\end{equation}

Finally we show that the maps $\theta$ and $\sigma$ in equations~(\ref{eq-theta-res}), (\ref{eq-sigma-res}) are mutually inverse:
\[
\theta\bigl(\sigma(x,y)\bigr)= \theta\! \left(\frac{h_2}{h_1\vs},\;\; \frac{8g^2h_3}{h_1^3\vs},\;\; -\frac{h_1}{4g(x)\vs} \right)  \;=\; \Bigl(\frac{4g+h_2}{h_1\vs},\;\; \frac{4g +3h_2 +h_3}{h_1 \vs} \Bigr) \;\stackrel{(\ref{eqxh1},\, \ref{eqyh1})}{=}\;  \bigl(x,\; y\bigr).
\]

Let $(U,V,W) =\bigl(\sigma\circ \theta\bigr)(u,v,w) = \sigma\bigl(u -\frac{1}{w},\; 2vw^2 +3u -\frac{1}{w}\bigr)$.  Here $(u,v,w) \in X_1\setminus X_{1,2}$ so $w \neq 0$ and $f_3(u,v,w)$ and $f_4(u,v,w)\neq 0$ also, from~(\ref{eq-X13}).  A (rather tedious) calculation yields:
\[
U = \frac{h_2}{h_1}\Bigl(u \textstyle-\frac{1}{w},\;\; 2vw^2 +3u -\frac{1}{w}\Bigr)\displaystyle \;\stackrel{(\ref{eqh1theta},\, \ref{eqw2h2})}{=}\; \frac{uf_3 -w^2\fon}{f_3} = u -\fon\frac{w^2}{f_3},
\]
\begin{multline*}
V=\frac{8g^2h_3}{h_1^3}\Bigl(u \textstyle -\frac{1}{w},\;\; 2vw^2 +3u -\frac{1}{w}\Bigr)\displaystyle \;\stackrel{(\ref{eqgf1vf3}),\, (\ref{eqh1theta}),\, (\ref{eqw3g})}{=}\; \frac{\fon f_4^2 +vf_3f_4^2}{f_3^3} \\[2mm] %
=\; v+ \frac{\fon f_4^2 +vf_3f_4^2 -vf_3^3}{f_3^3} \;\;\stackrel{(\ref{eqf3f4})}{=}\;\; v +\fon\frac{[f_4^2 +vw^3f_3\cdot (f_4 -f_3)]}{f_3^3},
\end{multline*}
and
\[
W=-\frac{h_1}{4g}\Bigl(u \textstyle -\frac{1}{w},\; 2vw^2 +3u -\frac{1}{w}\Bigr) \displaystyle \;\stackrel{(\ref{eqh1theta}),\, (\ref{eqw3g})}{=}\; -\frac{wf_3}{f_4}\;\; =\;\; w\;-\;w \cdot\! \left(1 +\frac{f_3}{f_4}\right) 
 \;\stackrel{(\ref{eqf3f4})}{=}\; w-\fon\frac{w^4}{f_4}.
\]

So
\[
(U,V,W) = \left(\!u-\fon\frac{w^2\fon}{f_3},\quad v+\fon\frac{[f_4^2 +vw^3f_3\cdot (f_4-f_3)]}{f_3^3},\quad w-\fon\frac{w^4}{f_4}\,\right).
\]
On $X_1$ where $\fon$ vanishes, $\bigl(\sigma\circ \theta\bigr)(u,v,w) =(U,V,W) =(u,v,w)$ as required.  This completes the proof.


\end{document}